\def\d{\delta}
\def\t{\theta}
\def\T{\Theta}
\def\i{\iota}
\def\s{\sigma}
\def\S{\Sigma}
\def\ZZ{\mathbb Z}
\def\RR{\mathbb R}
\def\calT{\mathcal T}
\def\esp{{\mathbf E}_{\bs{\t},\bs{\S}}}
\def\var{\mathbf{Var}}
\def\cov{\mathbf{Cov}}
\def\prob{{\mathbf P}_{\bs{\t},\bs{\S}}}
\def\calN{\mathcal N}
\def\trace{\mathbf{tr}}
\theoremstyle{plain}
\newtheorem{theorem}{Theorem}
\newtheorem{lemma}{Lemma}
\newtheorem{proposition}{Proposition}
\newtheorem{corollary}{Corollary}
\newtheorem*{theorem*}{Theorem}
\newtheorem*{lemma*}{Lemma}
\newtheorem*{proposition*}{Proposition}
\newtheorem*{corollary*}{Corollary}
\theoremstyle{remark}
\newtheorem{remark}{Remark}
\newtheorem*{remark*}{Remark}
\newtheorem*{note*}{Note}
\theoremstyle{definition}
\newtheorem*{definition*}{Definition}
\newtheorem{condition}{Assumption}
\def\bt{\boldsymbol \t}
\def\bY{{\mathbf Y}}
\def\beps{{\boldsymbol \epsilon}}
\def\one{\mathbbm{1}}
\def\cS{\mathcal{S}}
\def\cN{\mathcal{N}}
\def\cE{\mathcal{E}}
\def\varz{z}
\newcommand{\bs}[1]{{\boldsymbol #1}}
\newcommand{\pth}[1]{\left( #1 \right)}
\newcommand{\qth}[1]{\left[ #1 \right]}
\newcommand{\sth}[1]{\left\{ #1 \right\}}
\newcommand{\abs}[1]{\left| #1 \right|}
\newcommand{\nbyw}[1]{{\sf\color{red}[YW: #1]}}
\def\beps{\bs{\varepsilon}}
\def\rev{\color{black}}
\newcommand{\vertiii}[1]{{\left\vert\kern-0.25ex\left\vert\kern-0.25ex\left\vert #1
		\right\vert\kern-0.25ex\right\vert\kern-0.25ex\right\vert}}
\newcommand{\customlabel}[2]{%
	\protected@write \@auxout {}{\string \newlabel {#1}{{#2}{\thepage}{#2}{#1}{}} }%
	\hypertarget{#1}{}
}
\theoremstyle{remark}
\begin{document}
\begin{frontmatter}
\title{Minimax estimation of functionals in sparse vector model with correlated observations}
\runtitle{Minimax estimation with correlated observations}

\begin{aug}
\author[A]{\fnms{Yuhao}~\snm{Wang}\ead[label=e1]{yuhaow@tsinghua.edu.cn}},
\author[B]{\fnms{Pengkun}~\snm{Yang}\ead[label=e2]{yangpengkun@tsinghua.edu.cn}}
\and
\author[C]{\fnms{Alexandre}~\snm{Tsybakov}\ead[label=e3]{Alexandre.Tsybakov@ensae.fr}}
\address[A]{Institute for Interdisciplinary Information Sciences, Tsinghua University and Shanghai Qi Zhi Institute\printead[presep={,\ }]{e1}}

\address[B]{Department of Statistics and Data Science, Tsinghua University\printead[presep={,\ }]{e2}}

\address[C]{CREST, ENSAE, Institut Polytechnique de Paris\printead[presep={,\ }]{e3}}
\runauthor{Y. Wang et al.}
\end{aug}

\begin{abstract}
We consider the observations of an unknown $s$-sparse vector $\bs{\t}$ corrupted by Gaussian noise with zero mean and unknown covariance matrix $\bs{\S}$. We propose minimax optimal methods of estimating the $\ell_2$ norm of  $\bs{\t}$ and testing the hypothesis $H_0: \bs{\t}=0$ against sparse alternatives when only partial information about $\bs{\S}$ is available, such as an upper bound on its Frobenius norm and the values of its diagonal entries to within an unknown scaling factor. We show that the minimax rates of the estimation and testing are leveraged not by the dimension of the problem but by the value of the Frobenius norm of $\bs{\S}$.
\end{abstract}

\begin{keyword}[class=MSC]
	\kwd[Primary ]{62J05}
	\kwd{62G05}
\end{keyword}

\begin{keyword}
	\kwd{Correlated noise}
	\kwd{Gaussian sequence model}
	\kwd{Nonasymptotic minimax estimation}
	\kwd{Sparsity}
	\kwd{Unknown noise level}
\end{keyword}

\end{frontmatter}

\section{Introduction}\label{sec:intro}

In this paper, we consider the Gaussian vector model 
\begin{equation}
	\label{eq:model}
	\bs{Y} = \bs{\t} + \s \bs{\varepsilon},
\end{equation}
where $\bs{\varepsilon} \sim \cN(\bs{0}, \bs{\S})$ follows a $d$-dimensional zero-mean Gaussian distribution with  covariance matrix $\bs{\S}$, and $\s > 0$ is the noise level. We assume throughout that the diagonal elements $\sigma_i^2$ of matrix $\bs{\S}$ are known (the main example is $\sigma_i^2=1$ for all $i$, that is, we deal with a normalized matrix), while its off-diagonal elements are not, and $\sigma$ is either known or unknown. Based on the data $ \bs{Y}=(Y_1,\dots,Y_d)$, we consider the problem of estimating the quadratic functional and the $\ell_2$-norm of the parameter $\bs{\t} = (\t_1, \dots, \t_d)$:
\[
Q(\bs{\t})=\sum_{i=1}^d \theta_i^2,\quad \|\bs{\t}\|_2 = \sqrt{Q(\bs{\t})}.
\]
We assume that $\bs{\t}$ belongs to the class  $B_0(s, d)$ of all vectors $\bs{\t} \in \mathbb{R}^d$ such that $\|\bs{\t}\|_0 \le s$ ($s$-sparse vectors), where $s$ is an integer in $[1,d]$.
Here, $\|\bs{\t}\|_0$ denotes the number of non-zero components of $\bs{\t}$. When $\sigma$ is unknown, we also study the problem of estimating $\sigma$.



Most of the existing literature on estimation of functionals and on the related problem of testing under sparse vector models is focused on the case of uncorrelated data ($\bs{\S}$ is a diagonal matrix). However, in many applications such as time series analysis one needs to deal with dependent observations. 
In this paper, we study the setting where only partial information on $\bs{\S}$ is available, namely, either the Frobenius norm of $\bs{\S}$ or an upper bound is known. We propose estimators of $Q(\bs{\t})$ and $\|\bs{\t}\|_2$ that are adaptive to the noise level and are rate optimal in a minimax sense for $\|\bs{\t}\|_2$. 
As a consequence, we construct minimax optimal tests for signal detection. Furthermore, we propose minimax optimal estimators of the noise level $\sigma$.

When $\bs{\S}=\bs{I}$, where $\bs{I}$ is the identity matrix, and $\s$ is known,~\citet{CCT2017} proved that the rate of the minimax quadratic risk for estimating the $\ell_2$-norm $\|\bs{\t}\|_2$ of a $s$-sparse vector is equal to $\s^2 \phi(s, d)$, where for any $t>0$,
\begin{equation}\label{cct-rate}
	\phi(s,t): = \left\{\begin{array}{lc}
		s \log(1 + t  / s^2) & \textrm{if}\; s \le \sqrt{t}, \\
		\sqrt{t} & \textrm{otherwise}.
	\end{array}\right.  
\end{equation}
Moreover, \citet{CCT2017} derived the minimax rates for estimation of $Q(\bs{\t})$, and for testing the hypothesis $H_0: \bs{\t} = \bf{0}$ against sparse alternatives separated from $\bf{0}$ in the $\ell_2$-norm (improving the result of \citet{baraud} by matching the upper and lower bounds). Estimation of $\|\bs{\t}\|_2$ is related to such a testing problem in the sense that upper bounds on the risks for the problem of estimating $\|\bs{\t}\|_2$ imply upper bounds for testing, while lower bounds for the testing problem imply lower bounds for estimating $\|\bs{\t}\|_2$.  
\citet{comminges2021adaptive} considered the model \eqref{eq:model} with unknown $\s$ and uncorrelated noise (not necessarily Gaussian), and derived the minimax rates of estimating the functionals $Q(\bs{\t})$ and $\|\bs{\t}\|_2$ as well as of the noise level $\sigma$ on the class of $s$-sparse vectors. In particular, \citet{comminges2021adaptive} proved that, when the noise is i.i.d. Gaussian and $\s$ is unknown, the rate of the minimax quadratic risk for estimating the $\ell_2$-norm of a $s$-sparse vector is equal to $\s^2 \phi^*(s, d)$, where for any $t>0$,
\begin{equation}\label{rate-comminges-unknown sigma}
	\phi^*(s, t) = \left\{
	\begin{array}{lc}
		s \log(1 + t / s^2) & \textrm{if}\; s \le \sqrt{t}, \\
		\frac{s}{1 \vee \log(s / \sqrt{t})} & \textrm{otherwise.}
	\end{array}
	\right.  
\end{equation}
\citet{carpentier2019minimax,CCCTW22} further extended these results to the model of 
high-dimensional sparse regression with i.i.d. noise. Model \eqref{eq:model} in the case of  diagonal matrix $\bf{\S}$ with known entries and $\sigma=1$ was analyzed in \citet{LLM2012,chhor2022sparse}. Under such particular conditions, \citet{LLM2012} studied the minimax rates for the problem of testing mentioned above. In  \citet{chhor2022sparse} these results are extended to testing of $H_0: \bs{\t} = \bf{0}$ against sparse alternatives separated from $\bf{0}$ in the $\ell_p$-norm with $1\le p<\infty$ and further to estimation of the corresponding norms.
Another recent development due to \citet{cannone2022robust} shows that introducing an arbitrary contamination in model \eqref{eq:model} with $\bs{\S}=\bs{I}$ leads to slower minimax rates (for the problem of testing with $\ell_2$-separation) than those given in \cite{CCT2017}.
Finally, there exist works allowing for correlated noise, again focused on the problem of testing, see \citet{hall2009jin}, \citet{Gao2021change-point}, \citet{kotekal2021minimax}. The paper \cite{Gao2021change-point} deals with a change-point framework and establishes the minimax rates of testing for the particular case $s=d$ (no sparsity) assuming that matrix $\bs{\S}$ is fixed and positive definite. 
In \cite{kotekal2021minimax} the authors consider model \eqref{eq:model} with $\s=1$, equicorrelation matrix $\bs{\S} = (1 - \gamma)\bs{I} + \gamma \bs{1} \bs{1}^\top$, where $\gamma>0$, and $\bs{1}$ is a $d$-dimensional column vector with all entries equal to $1$, and establish the minimax rates of testing the hypothesis $H_0: \bs{\t} = \bf{0}$  against sparse alternatives separated from $\bf{0}$ in the $\ell_2$-norm assuming that $\gamma$ is known. 
{\rev They show that, for some choices of~$\gamma$, the detection boundary can be smaller than its value when the noise is uncorrelated.} In a more recent work  \cite{kotekal2023sparsity}, the same authors find the minimax rates of estimating linear functionals under the equicorrelated Gaussian noise. {\rev A different asymptotic instance of the same testing problem is considered in \citet{hall2009jin}, where it is assumed that all non-zero components of $\theta$ have the same value $r\sqrt{\log d}$ for some $r>0$, the sparsity parameter satisfies $s=d^a$ for some $a\in(0,1/2)$, and the positions of non-zero components are randomly generated. In this model, it is proved in \citet{hall2009jin} that if $\bs{\S}$ is a Toeplitz matrix, then the asymptotic (as $d\to \infty$) detection boundary can be smaller than its value when the noise is uncorrelated. The improvement is not in the rate but in the constant factor.}

In summary, the existing literature dealing with our non-asymptotic setting
(mostly focused on the problem of testing rather than estimation of functionals) allows one to tackle the problem under sparsity when $\bs{\S}$ is either a diagonal matrix with known entries or the equicorrelation matrix, and in the latter case all parameters of the model except $\bs{\t}$ need to be known. However, under general correlation structure, the question of how to estimate functionals of a sparse vector $\bs{\t}$ in a minimax optimal way and how to construct the associated tests remains open. In this paper, we focus on the problem of estimating the functionals $Q(\bs{\t})$ and $\|\bs{\t}\|_2$ and we consider the general setting, with arbitrary structure of $\bs{\S}$ and with the noise level $\s$ that can be either known or unknown.
Moreover, the estimators that we propose do not require knowing the off-diagonal entries of $\bs{\S}$. They need only partial information about $\bs{\S}$ such as the relative scale of the diagonal entries, the value of its Frobenius norm  or an upper bound on it. We construct estimators of $\|\bs{\t}\|_2$ that are optimal in a minimax sense and are adaptive to the noise level $\sigma$. We also derive the associated minimax rates of testing. The minimax rates that we obtain for estimating $\|\bs{\t}\|_2$ under known and unknown $\sigma$ are of the form \eqref{cct-rate} and \eqref{rate-comminges-unknown sigma}, respectively, with $t=\|\bs{\S}\|_F^2$, where $\|\bs{\S}\|_F$ is the Frobenius norm of~$\bs{\S}$. If $\bs{\S}=\bs{I}$ we recover the rates $\s^2 \phi(s, d)$ and $\s^2 \phi^*(s, d)$  obtained in \cite{CCT2017,comminges2021adaptive}. For general~$\bs{\S}$, we arrive at an interesting conclusion that the possibility of faster or slower rates is leveraged not by the dimension $d$ but by the value of the Frobenius norm of $\bs{\S}$.

Obtaining the results for the model with correlated noise is challenging. The tools used in~\cite{comminges2021adaptive} for the setting with unknown $\s$, including exponential inequalities and sample splitting to obtain multiple independent estimators,  break down when the observations are correlated. We develop new proof techniques allowing us to
obtain a slight improvement upon~\cite{comminges2021adaptive} even in the case of i.i.d. observations, {\rev which consists in the fact that} our estimator does not need sample splitting.

{\rev This paper is organized as follows. In~\Cref{sec:known-sigma}, we discuss estimation of $Q(\bt)$ and $\|\bt\|_2$ when~$\s$ is known. In~\Cref{sec:sigma}, we introduce an estimator of $\s$. \Cref{sec:unknown-sigma} deals with the problem of estimating $\|\bt\|_2$ when $\s$ is unknown. In~\Cref{sec:tests}, we consider testing of the null hypothesis $\|\bt\|_2 = 0$. \Cref{sec:proof} is devoted to the proofs.}

{\bf Notation.}
Let $[n]:=\{1,\dots,n\}$.
For $1\le q\le \infty$, let $\|\cdot\|_q$ denote the $\ell_q$-norm in $\RR^d$, and let $B_q(r):=\{x\in\RR^d :\|x\|_q\le r\}$ be the $\ell_q$-ball of radius $r$.
We denote by $\Phi$ the cumulative distribution function of the standard Gaussian distribution.
For $x,y\in\RR$, we set $x\wedge y := \min\{x,y\}$ and $x\vee y:=\max\{x,y\}$.
We use standard big-$O$ notations, e.g., for two positive sequences $\{a_n\}$ and $\{b_n\}$, we write $a_n=O(b_n)$ or $a_n\lesssim b_n$ if $a_n\le C b_n$ for some absolute constant $C>0$; $a_n=\Omega(b_n)$ or $a_n\gtrsim b_n$ if $b_n=O(a_n)$; $a_n=\Theta(b_n)$ or $a_n\asymp b_n$ if $a_n=O(b_n)$ and $b_n=O(a_n)$.
Given a matrix $A\in\RR^{d\times d}$ and $\mathcal{I}, \mathcal{J}\subseteq [d]$, we denote by $A_{\mathcal{I}, \mathcal{J}}$ the submatrix of $A$ with rows from $\mathcal{I}$ columns from $\mathcal{J}$, by $A_{ij}$ the entries of $A$, and by $\|A\|_F$, $\lambda_{\max}(A)$, and $\trace[A]$, the Frobenius norm, the maximal eigenvalue, and the trace of $A$, respectively. We denote by $\esp$ the expectation with respect to the probability measure $\prob$ of random vector $\bs{Y}$ satisfying \eqref{eq:model}. We denote by $c,C,C'$ absolute positive constants that can vary from line to line.

\section{Estimating the quadratic functional and the $\ell_2$-norm when $\sigma$ is known}
\label{sec:known-sigma}

In this section, we consider the case of known noise level $\s$. Throughout the paper, we assume the following condition on the diagonal entries of the covariance matrix.
\begin{condition} \label{as:var0}
	The diagonal elements $\sigma_{i}^2, i=1,\dots,d,$ of the covariance matrix  matrix $\bs{\S}$ satisfy $\max_{1\le i\le d}\sigma_{i}^2 \le 1 $. 
\end{condition}
Assumption \ref{as:var0} is just a normalization condition. It assures that $\sigma$ can be considered as the noise level. As $\bs{\S}$ is a covariance matrix Assumption \ref{as:var0} implies that all entries of $\bs{\S}$ are bounded by 1. 
Note also that we can replace the condition $\max_{1\le i\le d}\sigma_{i}^2 \le 1 $ by $\max_{1\le i\le d}\sigma_{i}^2 \le c $, where $c>0$ is an absolute constant, with no incidence on the results.

Assume first that the Frobenius norm of the covariance matrix $\bs{\S}$ is given. Then we define the estimator $\hat Q$ of the quadratic functional $Q(\bs{\t})$  as
\begin{equation}
	\Hat{Q} =
	\begin{cases}
		\sum_{i=1}^d (Y_i^2 - \s^2 \s_{i}^2\beta_s) \one\{|Y_i| >  \s \s_{i} \tau \}, & s \le \|\bs{\S}\|_F,\\
		\sum_{i=1}^d (Y_i^2 - \s^2 \s_{i}^2), & \text{otherwise},
	\end{cases}
\end{equation}
where $\beta_s := {\mathbf E}\left[Z^2 | \, |Z| \ge \tau \right]$ with $Z\sim \mathcal{N}(0,1)$, and $\tau = 3 \sqrt{\log ( 1 + \|\bs{\S}\|_F^2 / s^2)}$. The associated estimator of the $\ell_2$-norm $\|\bs{\t}\|_2$ is defined as
\[
\hat{N} = \sqrt{|\hat{Q}|}.
\]
Set
\[
\psi(s,\bs{\S}) := \phi(s,\|\bs{\S}\|_F^2) = \left\{\begin{array}{lc}
	s \log(1 + \|\bs{\S}\|_F^2  / s^2) & \textrm{if}\; s \le \|\bs{\S}\|_F, \\
	\|\bs{\S}\|_F & \textrm{otherwise,}
\end{array}\right.
\]
and
\[
\bar\psi(s,\bs{\S}) := \lambda_{\max}(\bs{\S}) \wedge s. 
	\]
	Let $\cS := \{i : \t_i \neq 0\}$ denote the support of $\bs{\t}$. The following theorem provides upper bounds for the mean square risks of the estimators $\hat{Q}$ and $\hat{N}$.
	
	\begin{theorem}\label{thm:knownvar}
		There exists an absolute constant $C>0$ such that, for any $\bs{\t}\in B_0(s, d)$ and any $\bs{\S}$ satisfying Assumption \ref{as:var0},
		\begin{equation}\label{thmeq:hatq}
			\esp[(\hat{Q} - \|\bs{\t}\|_2^2)^2] \le C \left(\s^2  \|\bs{\t}\|_2^2 \lambda_{\max}(\bs{\S_{\cS, \cS}})+ \s^4 \psi^2(s, \bs{\S})\right).
		\end{equation}
		Furthermore, for any $\bs{\S}$ satisfying Assumption \ref{as:var0},
		\begin{equation}\label{thmeq:hatq0}
			\sup_{\bs{\t} \in B_0(s, d) \cap B_2(r)}\esp[(\hat{Q} - \|\bs{\t}\|_2^2)^2] \le C \left(\s^2  r^2 \bar\psi(s,\bs{\S})+ \s^4 \psi^2(s, \bs{\S})\right)
		\end{equation}
		and
		\begin{equation}\label{thmeq:hatn}
			\sup_{\bs{\t} \in B_0(s, d)}\esp[(\hat{N} - \|\bs{\t}\|_2)^2] \le C \s^2 \psi(s,\bs{\S}).
		\end{equation}
	\end{theorem}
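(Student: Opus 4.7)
The plan is to decompose $\esp[(\hat Q-\|\bs\t\|_2^2)^2]$ into squared bias and variance and treat the regimes $s>\|\bs\S\|_F$ and $s\le\|\bs\S\|_F$ separately. Inequality \eqref{thmeq:hatq0} follows from \eqref{thmeq:hatq} by noting that principal-submatrix interlacing gives $\lambda_{\max}(\bs\S_{\cS,\cS})\le\lambda_{\max}(\bs\S)$ and $\lambda_{\max}(\bs\S_{\cS,\cS})\le\trace(\bs\S_{\cS,\cS})\le s$ under Assumption~\ref{as:var0}, so $\lambda_{\max}(\bs\S_{\cS,\cS})\le\bar\psi(s,\bs\S)$; taking the supremum over $\|\bs\t\|_2\le r$ is then immediate.

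\textbf{Dense regime.} When $s>\|\bs\S\|_F$, $\hat Q=\sum_i(Y_i^2-\sigma^2\sigma_i^2)$ is unbiased for $\|\bs\t\|_2^2$. Writing $\hat Q-\|\bs\t\|_2^2=\sigma^2\sum_i(\varepsilon_i^2-\sigma_i^2)+2\sigma\sum_i\theta_i\varepsilon_i$, Isserlis' theorem yields $\cov(\varepsilon_i^2,\varepsilon_j^2)=2\Sigma_{ij}^2$, so the first sum has variance $2\sigma^4\|\bs\S\|_F^2$; the second has variance $4\sigma^2\bs\t^\top\bs\S\bs\t\le 4\sigma^2\lambda_{\max}(\bs\S_{\cS,\cS})\|\bs\t\|_2^2$; and the cross-covariance vanishes by odd-moment symmetry of centered Gaussians. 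Since $\psi(s,\bs\S)=\|\bs\S\|_F$ here, \eqref{thmeq:hatq} follows.

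\textbf{Sparse regime.} Set $X_i:=(Y_i^2-\sigma^2\sigma_i^2\beta_s)\one\{|Y_i|>\sigma\sigma_i\tau\}$. The tuning $\beta_s=\mathbf{E}[Z^2\mid|Z|\ge\tau]$ is chosen so that $\esp[X_i]=0$ for every $i\notin\cS$ (since $Y_i/(\sigma\sigma_i)\sim\cN(0,1)$ marginally), confining bias to $\cS$; standard shifted-Gaussian tail estimates, combined with $\tau^2=9\log(1+\|\bs\S\|_F^2/s^2)$ making $e^{-\tau^2/2}$ very small, yield $\sum_{i\in\cS}|\esp[X_i]-\theta_i^2|\lesssim\sigma^2 s\tau^2\asymp\sigma^2\psi(s,\bs\S)$, hence squared bias $\lesssim\sigma^4\psi^2$. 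For $\var(\hat Q)=\sum_{i,j}\cov(X_i,X_j)$, the crucial observation is that for $i\notin\cS$, $X_i=\sigma^2\sigma_i^2(Z_i^2-\beta_s)\one\{|Z_i|>\tau\}$ with $Z_i:=\varepsilon_i/\sigma_i$, so $X_i$ is a mean-zero \emph{even} function of $Z_i$. Mehler's formula then yields $|\cov(X_i,X_j)|\lesssim R_{ij}^2\sqrt{\var(X_i)\var(X_j)}$ for $i,j\notin\cS$, where $R_{ij}=\Sigma_{ij}/(\sigma_i\sigma_j)$; the factors $\sigma_i^2\sigma_j^2$ cancel so that $|\cov(X_i,X_j)|\lesssim\Sigma_{ij}^2\sigma^4\tau^4 e^{-\tau^2/2}$ using $\var(X_i)\lesssim\sigma^4\sigma_i^4\tau^4 e^{-\tau^2/2}$ from a direct tail computation. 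Summing, $\sum_{i,j}\Sigma_{ij}^2=\|\bs\S\|_F^2$ together with $\|\bs\S\|_F^2 e^{-\tau^2/2}\lesssim s^2$ (from the specific choice of $\tau$ in the sparse regime) gives total $\lesssim\sigma^4 s^2\tau^4\asymp\sigma^4\psi^2$. The $O(s)$ rows and columns indexed by $\cS$ are handled by writing $X_i$ for $i\in\cS$ as the sum of a linear-in-noise contribution $2\theta_i\sigma\sigma_i Z_i\one\{A_i\}$ plus an even residual; the aggregated linear piece has leading variance $4\sigma^2\bs\t^\top\bs\S\bs\t\le 4\sigma^2\lambda_{\max}(\bs\S_{\cS,\cS})\|\bs\t\|_2^2$, while the residual parts contribute only $O(\sigma^4\psi^2)$ by Cauchy-Schwarz since there are only $O(s)$ terms.

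\textbf{From $\hat Q$ to $\hat N$ and main obstacle.} For \eqref{thmeq:hatn}, split on whether $\|\bs\t\|_2^2\ge c\sigma^2\psi(s,\bs\S)$. If yes, $(\hat N-\|\bs\t\|_2)^2\le(\hat Q-\|\bs\t\|_2^2)^2/\|\bs\t\|_2^2$ via $(\hat N+\|\bs\t\|_2)^2\ge\|\bs\t\|_2^2$ and $\bigl||\hat Q|-\|\bs\t\|_2^2\bigr|\le|\hat Q-\|\bs\t\|_2^2|$; combining with the $\bar\psi$-version of \eqref{thmeq:hatq} and using $\bar\psi\le\psi$ gives $\lesssim\sigma^2\psi$. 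Otherwise $\|\bs\t\|_2\lesssim\sigma\sqrt\psi$ and we use $(\hat N-\|\bs\t\|_2)^2\le 2|\hat Q|+2\|\bs\t\|_2^2$, with $\esp|\hat Q|\le\|\bs\t\|_2^2+\sqrt{\esp[(\hat Q-\|\bs\t\|_2^2)^2]}\lesssim\sigma^2\psi$ again by \eqref{thmeq:hatq}. The main obstacle throughout is the variance control in the sparse regime under correlation: sample-splitting as in \cite{comminges2021adaptive} is unavailable, and a naive Cauchy-Schwarz over $[d]\times[d]$ would produce dimension-dependent terms rather than $\|\bs\S\|_F^2$-dependent ones. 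The evenness of $X_i$ for $i\notin\cS$, combined with Mehler's formula to extract a factor of $R_{ij}^2$ rather than $|R_{ij}|$, is the crucial new element that makes $\|\bs\S\|_F^2$ appear as the effective complexity in place of $d$.
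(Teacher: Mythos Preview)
Your approach is essentially the paper's: identical treatment of the dense regime, of the passage from \eqref{thmeq:hatq} to \eqref{thmeq:hatq0} via $\lambda_{\max}(\bs\S_{\cS,\cS})\le \lambda_{\max}(\bs\S)\wedge s$, and of the passage to \eqref{thmeq:hatn} by splitting on $\|\bs\t\|_2 \lessgtr \sigma\sqrt{\psi}$. The key ingredient in the sparse regime---that the covariance of two centered even functions of standard Gaussians with correlation $\nu$ scales like $\nu^2$ rather than $|\nu|$, so that $\|\bs\S\|_F^2$ (not $d$) is the effective complexity---is exactly what the paper uses (Lemma~\ref{lmm:correlated-gaussian}\ref{lmm:truncate-moments} is the same content as your Mehler argument).

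The one imprecise step is your handling of the $\cS$-indexed terms in the sparse regime. The split $X_i = 2\theta_i\sigma\sigma_i Z_i\one\{A_i\} + (\text{even residual})$ does not hold: the event $A_i = \{|Y_i|>\sigma\sigma_i\tau\}$ depends on $\theta_i$, so neither piece has the parity you claim, and the ``residual'' $(\theta_i^2 + \sigma^2\sigma_i^2(Z_i^2-\beta_s))\one\{A_i\}$ still carries a $\theta_i^2$ factor that would spoil a crude Cauchy--Schwarz bound. The paper avoids this via a cleaner algebraic rearrangement: for $i\in\cS$ it writes
\[
X_i-\theta_i^2 \;=\; 2\sigma\theta_i\varepsilon_i \;+\; \sigma^2(\varepsilon_i^2-\sigma_i^2\beta_s)\one(|\varepsilon_i|>\sigma_i\tau) \;+\; B_i,
\]
where $B_i=\sigma^2(\varepsilon_i^2-\sigma_i^2\beta_s)\one(|\varepsilon_i|\le\sigma_i\tau) - (Y_i^2-\sigma^2\sigma_i^2\beta_s)\one(|Y_i|\le\sigma\sigma_i\tau)$ is bounded \emph{almost surely} by $C\sigma^2\tau^2$ (both indicators force the prefactors to be $O(\sigma^2\tau^2)$). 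The middle terms are then merged with the $i\notin\cS$ terms into a single sum $\sum_{i=1}^d \sigma^2(\varepsilon_i^2-\sigma_i^2\beta_s)\one(|\varepsilon_i|>\sigma_i\tau)$, to which the $\nu^2$-covariance bound applies over all of $[d]$; the linear pieces give exactly $4\sigma^2\bs\t^\top\bs\S_{\cS,\cS}\bs\t$; and the $B_i$'s contribute at most $(Cs\sigma^2\tau^2)^2 \asymp \sigma^4\psi^2$. This removes any need to analyze $\cov(X_i,X_j)$ for $i\in\cS,\ j\notin\cS$ separately, which is where your sketch is loose.
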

	It is interesting to compare \eqref{thmeq:hatn} to the bound for the case $\bs{\S}=\bs{I}$  \cite{CCT2017}, that is, $\s^2\phi(s,d)$, where $\phi(s,d)$ is defined in \eqref{cct-rate}. Since $\bs{\S}$ is a covariance matrix, Assumption \ref{as:var0} implies that $\|\bs{\S}\|_F$ is at most of order $d$. Particularly, in the extreme case $\|\bs{\S}\|_F\asymp d$, there is no elbow in the rate and $\psi(s,\bs{\S})\asymp s\log(1 + d^2/s^2)$ for all $1\le s\le d$. Thus, we get a substantial deterioration in the zone $s>\sqrt{d}$ compared to the rate $\s^2\phi(s,d)$ with uncorrelated data. On the contrary, for the other extreme case $\|\bs{\S}\|_F\asymp \sqrt{d}$, we have $\psi(s,\bs{\S})\asymp \phi(s,d)$ and we recover the rate obtained in \cite{CCT2017} for $\bs{\S}=\bs{I}$. Finally, if $\bs{\S}$ is a very sparse matrix, with very small diagonal elements (or some diagonal elements equal to 0, that is, no noise in some components of $\bs{\t}$), then the rate can be further improved.
	
	{\rev If only an upper bound on $\|\bs{\S}\|_F$ is available, we can modify the definition of $\hat{N}$ by using this upper bound in place of $\|\bs{\S}\|_F$. We now show that such an estimator is minimax optimal in the following sense. Consider the class of parameters
		\begin{align}
			\T(s, d, \rho) := \{(\bs{\t}, \bs{\S}) \in \RR^d \times \RR^{d \times d}: & \;\bs{\t} \in B_0(s, d), \; \|\bs{\S}\|_F \le \rho \\
			& \;\;\textrm{and} \; \bs{\S}_{ii}=1 \ \textrm{for} \ i=1,\dots,d \},
		\end{align}
}
	where $\rho\in (0,d]$ is a given value. Note that it makes no sense to consider $\rho>d$, since the bound $\|\bs{\S}\|_F \le d$ is trivially satisfied if $\bs{\S}_{ii}=1$ for $i=1,\dots,d$. The following lower bound holds.
	\begin{theorem}\label{thm:lower-known-sigma-rho}
		Let $w:[0,\infty)\to[0,\infty)$ be a monotone non-decreasing function such that $w(0)=0$ and $w\not\equiv 0$. Then there exists a constant $c>0$ depending only on $w(\cdot)$ such that
		\[
		\inf_{\hat{T}}\sup_{(\bs{\t}, \bs{\S}) \in \T(s, d, \rho)} \esp\, w\big(\s^{-1}( \phi(s, \rho^2))^{-1/2}|\hat{T} - \|\bs{\t}\|_2| \big)  \ge c,
		\]
		where $\inf_{\hat{T}}$ denotes the infimum over all estimators. 
	\end{theorem}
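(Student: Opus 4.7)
The plan is to reduce the minimax lower bound to the conclusion of Theorem~\ref{thm:lower-known-sigma} specialized to a single covariance matrix $\bs{\S}^*$ satisfying $(\bs{\S}^*)_{ii} = 1$ for all $i$ and $\|\bs{\S}^*\|_F = \rho$. For such a choice, the rescaling factor $\s^{-1}\phi(s,\rho^2)^{-1/2}$ in the target inequality coincides with $\s^{-1}\psi(s, \bs{\S}^*)^{-1/2}$, and the singleton family $\{(\bs{\t}, \bs{\S}^*) : \bs{\t} \in B_0(s, d)\}$ lies inside $\T(s, d, \rho)$. Hence it suffices to prove, for this fixed $\bs{\S}^*$, that
\[
\inf_{\hat T} \sup_{\bs{\t} \in B_0(s, d)} \esp\, w\big(\s^{-1}\psi(s, \bs{\S}^*)^{-1/2}\,|\hat T - \|\bs{\t}\|_2|\big) \ge c.
\]
Such a bound is exactly the per-$\bs{\S}$ version of Theorem~\ref{thm:lower-known-sigma}, and is naturally matched by the per-$\bs{\S}$ upper bound~\eqref{thmeq:hatn}.

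To construct $\bs{\S}^*$ across the admissible range $\rho \in [\sqrt d, d]$ (the unit-diagonal constraint forces $\|\bs{\S}^*\|_F \in [\sqrt d, d]$), I would interpolate between $\bs{I}_d$ (Frobenius norm $\sqrt d$) and $\bs{1}\bs{1}^\top$ (Frobenius norm $d$), e.g.\ by block-diagonal matrices with one identity block and one equicorrelation block of tunable size and correlation parameter, and then apply the intermediate value theorem to land exactly on $\|\bs{\S}^*\|_F = \rho$. The desired per-$\bs{\S}^*$ lower bound then follows from the same Le Cam / chi-square machinery that underlies Theorem~\ref{thm:lower-known-sigma}, now invoked with $\bs{\S}^*$ in place of an adversarially optimized covariance.

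The main obstacle is to confirm that the proof of Theorem~\ref{thm:lower-known-sigma} is truly covariance-by-covariance: that its construction produces, for every admissible $\bs{\S}$, a prior $\pi_{\bs{\S}}$ on $B_0(s, d)$ for which the mixture $\int \calN(\bs{\t}, \s^2 \bs{\S})\,d\pi_{\bs{\S}}(\bs{\t})$ has $O(1)$ chi-square divergence from $\calN(0, \s^2 \bs{\S})$. If this is the case, feeding in our $\bs{\S}^*$ immediately yields the bound. Otherwise, one would redo the chi-square calculation for this specific $\bs{\S}^*$, exploiting its block structure so that the mixture factorizes across blocks and the cross-correlation terms remain manageable --- the principal technical burden being control of $\bs{\t}^\top (\bs{\S}^*)^{-1} \bs{\t}'$ for $\bs{\t}, \bs{\t}'$ drawn from a uniform prior on signed $s$-sparse vectors with magnitude of order $\s\sqrt{\log(1+\rho^2/s^2)}$ in the regime $s \le \rho$, and by a two-point argument giving the $\rho$-parametric rate in the regime $s > \rho$.
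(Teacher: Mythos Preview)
Your primary strategy rests on Theorem~\ref{thm:lower-known-sigma} being proved covariance-by-covariance. It is not: the paper's proof of that theorem simply evaluates the supremum at $\bs{\S}=\bs{I}$ and invokes the i.i.d.\ lower bound of~\cite{CCT2017}. More seriously, a per-$\bs{\S}$ lower bound of order $\sigma^2\psi(s,\bs{\S})$ is in general \emph{false} for a known fixed $\bs{\S}$. Take $\bs{\S}=\bs{1}\bs{1}^\top$: all noise components coincide, so pairwise differences $Y_i-Y_j=\theta_i-\theta_j$ are observed exactly and $\|\bs{\t}\|_2$ can be estimated at a rate far below $\sigma^2\psi(s,\bs{\S})=\sigma^2\phi(s,d^2)$. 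Hence merely landing on $\|\bs{\S}^*\|_F=\rho$ via the intermediate value theorem is not enough; one must simultaneously construct $\bs{\S}^*$ \emph{and} a matched prior so that the resulting fixed-$\bs{\S}^*$ problem is genuinely hard at scale $\phi(s,\rho^2)$. Your proposed construction (one identity block plus one equicorrelation block) with a uniform signed $s$-sparse prior does not achieve this: a prior on the identity block yields at most $\phi(s,d)$, which undershoots $\phi(s,\rho^2)$ when $\rho\gg\sqrt d$, and the chi-square for a prior on the equicorrelation block involves $(\bs{\S}^*)^{-1}$ in a way that does not obviously deliver the target rate.

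The paper's argument is different and cleaner. It takes $\bs{\S}_0$ block-diagonal with $p$ copies of the rank-one block $\bs{1}_{r\times r}$ (plus an identity tail) and restricts to $\bs{\t}$ that are \emph{constant on each block with at most one nonzero block}. Under this restriction the $d$-dimensional correlated model is equivalent, by sufficiency, to a $p$-dimensional i.i.d.\ Gaussian model with a $1$-sparse mean $\tilde{\bs{\t}}$ and $\|\bs{\t}\|_2=\sqrt r\,\|\tilde{\bs{\t}}\|_2$; no inversion of $\bs{\S}_0$ and no direct chi-square computation in $\RR^d$ are needed. The CCT2017 bound on the reduced problem gives rate $r\,\phi(1,p)=r\log(1+p)$, and choosing $r\asymp s$, $p\asymp(\rho^2/s^2)\wedge(d/s)$ (which ensures $\|\bs{\S}_0\|_F\le\rho$) yields $r\log(1+p)\asymp\phi(s,\rho^2)$ for $s\le\rho$. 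For $s>\rho$ one takes a single block of size $r\asymp\rho$ and $p=1$, matching your two-point intuition. The idea you are missing is this \emph{dimension reduction}: the hardness comes not from a generic sparse prior against a correlated $\bs{\S}^*$, but from a block-constant prior that collapses the model to an i.i.d.\ problem of the right effective dimension.
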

	Let $\tilde{N}$ be an estimator defined in the same way as $\hat{N}$ with the only difference that $\|\bs{\S}\|_F$ is replaced by its upper bound $\rho$. The following theorem shows that $\tilde{N}$ is minimax optimal over the set $\T(s, d, \rho)$ with respect to the mean square risk.
	
	\begin{theorem}\label{thm:knownlb}
		We have
		\begin{equation}\label{eq1:thm:knownlb}
			\sup_{(\bs{\t}, \bs{\S}) \in \T(s, d, \rho)} \esp[(\tilde{N} - \|\bs{\t}\|_2)^2] \lesssim \s^2 \phi(s, \rho^2),  
		\end{equation}
		and 	
		\begin{equation}\label{eq2:thm:knownlb}
			\inf_{\hat{T}}\sup_{(\bs{\t}, \bs{\S}) \in \T(s, d, \rho)} \frac{\esp[(\hat{T} - \|\bs{\t}\|_2)^2]}{\phi(s, \rho^2)} \asymp \s^2.
		\end{equation}
	\end{theorem}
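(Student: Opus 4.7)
The plan is to establish the upper bound \eqref{eq1:thm:knownlb} by re-running the argument behind Theorem~\ref{thm:knownvar} with $\rho^2$ in place of $\|\bs{\S}\|_F^2$, and then to obtain \eqref{eq2:thm:knownlb} by pairing this upper bound with the minimax lower bound of Theorem~\ref{thm:lower-known-sigma-rho} applied to $w(u)=u^2$. The one structural remark I would make at the outset is that on the class $\T(s,d,\rho)$, the normalization $\bs{\S}_{ii}=1$ forces $d\le \|\bs{\S}\|_F^2 \le \rho^2$, which lets me absorb any stray factor of $d$ from a coordinate-wise sum of pure-noise contributions into $\rho^2$.

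I would then split according to the two branches defining $\tilde N$. In the branch $s>\rho$, one has $s>\|\bs{\S}\|_F$ automatically, so $\tilde N$ coincides with $\hat N$ and Theorem~\ref{thm:knownvar} gives $\esp[(\tilde N-\|\bs{\t}\|_2)^2]\lesssim \s^2\psi(s,\bs{\S})=\s^2\|\bs{\S}\|_F\le \s^2\rho=\s^2\phi(s,\rho^2)$. In the branch $s\le \rho$, the estimator $\tilde N$ is built from the hard-thresholded $\hat Q$ with threshold $\tilde\tau=3\sqrt{\log(1+\rho^2/s^2)}$ and the corresponding centering $\tilde\beta_s={\mathbf E}[Z^2 \mid |Z|\ge \tilde\tau]$. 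Here I would replicate the bias-variance decomposition behind Theorem~\ref{thm:knownvar}: the centering $\tilde\beta_s$ cancels the noise mean under the indicator, the missed-signal bias is controlled by the Gaussian tail bound on the event $\{|\theta_i+\s\sigma_i\varepsilon_i|\le \s\sigma_i\tilde\tau\}$ exactly as in the original proof, and the variance splits into a signal-noise cross term of order $\s^2\|\bs{\t}\|_2^2\lambda_{\max}(\bs{\S}_{\cS,\cS})$ and a pure-noise term whose leading contribution is $\s^4\sum_{i,j}|\bs{\S}_{ij}|^2\, g(\tilde\tau)$, where Mill's inequality yields $g(\tilde\tau)\lesssim \tilde\tau^2 e^{-\tilde\tau^2}\lesssim \log(1+\rho^2/s^2)(1+\rho^2/s^2)^{-9}$. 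Using $\sum_{i,j}|\bs{\S}_{ij}|^2=\|\bs{\S}\|_F^2\le \rho^2$ then produces a pure-noise variance of order $\s^4 s^2\log^2(1+\rho^2/s^2)$. Converting from $\hat Q$ to $\tilde N=\sqrt{|\hat Q|}$ via the elementary inequality $(\sqrt{|a|}-\sqrt{b})^2\le |a-b|$, valid for $a\in\RR$ and $b\ge 0$, finally gives $\esp[(\tilde N-\|\bs{\t}\|_2)^2]\lesssim \s^2 s\log(1+\rho^2/s^2)=\s^2\phi(s,\rho^2)$, as claimed.

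For \eqref{eq2:thm:knownlb}, Theorem~\ref{thm:lower-known-sigma-rho} with $w(u)=u^2$ yields the matching lower bound, which paired with \eqref{eq1:thm:knownlb} gives the equivalence $\asymp \s^2$. The main obstacle I anticipate is the intermediate sub-regime $\|\bs{\S}\|_F<s\le \rho$: there $\tilde N$ hard-thresholds while $\hat N$ would not, so Theorem~\ref{thm:knownvar} cannot be invoked as a black box and the pure-noise variance computation must genuinely be redone. The normalization identity $d\le \rho^2$ is the crucial leverage, ensuring that even the sum of $d$ weakly-dependent contributions from the indicators $\one\{|Y_i|>\s\sigma_i\tilde\tau\}$ collapses to the target order $s^2\log^2(1+\rho^2/s^2)$ rather than scaling with the ambient dimension.
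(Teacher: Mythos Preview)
Your overall plan---repeat the proof of Theorem~\ref{thm:knownvar} with $\rho$ in place of $\|\bs{\S}\|_F$, then invoke Theorem~\ref{thm:lower-known-sigma-rho} with $w(u)=u^2$---is exactly what the paper does, and your treatment of the branch $s>\rho$ and of \eqref{eq2:thm:knownlb} is fine.

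The gap is in the branch $s\le\rho$, at the passage from $\tilde Q$ to $\tilde N$. You correctly identify the signal--noise cross term of order $\s^2\|\bs{\t}\|_2^2\lambda_{\max}(\bs{\S}_{\cS,\cS})$ in the MSE of $\tilde Q$, but then your conversion via $(\sqrt{|a|}-\sqrt{b})^2\le |a-b|$ yields only
\[
\esp[(\tilde N-\|\bs{\t}\|_2)^2]\le \esp\big|\tilde Q-\|\bs{\t}\|_2^2\big|
\le \sqrt{\esp\big[(\tilde Q-\|\bs{\t}\|_2^2)^2\big]}
\lesssim \s\|\bs{\t}\|_2\sqrt{\bar\psi(s,\bs{\S})}+\s^2\phi(s,\rho^2),
\]
and the first term is \emph{unbounded} over $B_0(s,d)$ since $\|\bs{\t}\|_2$ is unrestricted. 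No amount of normalization ($d\le\rho^2$) fixes this; the cross term genuinely grows with the signal.

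The paper handles this by splitting on the size of $\|\bs{\t}\|_2$ relative to the threshold $t=\s\sqrt{\phi(s,\rho^2)}$. When $\|\bs{\t}\|_2>t$ one uses $(a-b)^2\le (a^2-b^2)^2/a^2$ with $a=\|\bs{\t}\|_2$, so that dividing the MSE of $\tilde Q$ by $\|\bs{\t}\|_2^2\ge t^2$ kills the cross term down to $\s^2\bar\psi(s,\bs{\S})\le\s^2\phi(s,\rho^2)$. When $\|\bs{\t}\|_2\le t$ one uses $(a-b)^2\le 2|a^2-b^2|+4b^2$, absorbing the cross term as $\s t\sqrt{\bar\psi(s,\bs{\S})}\le\s^2\phi(s,\rho^2)$ and the residual $4\|\bs{\t}\|_2^2\le 4t^2$ directly. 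This two-case device is the missing ingredient; once you insert it, the rest of your outline goes through. Note also that the sub-regime $\|\bs{\S}\|_F<s\le\rho$ you flag as the main obstacle is not really a separate difficulty: Proposition~\ref{prop:correlated-gaussian}\ref{prop:truncate-moments} gives the pure-noise variance as $\lesssim\|\bs{\S}\|_F^2\tilde\tau^4 e^{-\tilde\tau^2/2}\le\rho^2\tilde\tau^4 e^{-\tilde\tau^2/2}$ regardless of where $s$ sits relative to $\|\bs{\S}\|_F$, and the choice $\tilde\tau=3\sqrt{\log(1+\rho^2/s^2)}$ collapses this to $\lesssim s^2\tilde\tau^4$ uniformly.
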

	We omit the proof of \eqref{eq1:thm:knownlb}  as it follows exactly the same lines as the proof of \eqref{thmeq:hatn} with $\|\bs{\S}\|_F$ replaced by $\rho$. The minimax property \eqref{eq2:thm:knownlb} follows by combining \eqref{eq1:thm:knownlb} and  
	Theorem \ref{thm:lower-known-sigma-rho} with $w(u)=u^2$.
	
	
	
	\section{Estimating the noise level}
	\label{sec:sigma}
	
		In what follows, we consider the model with unknown $\sigma$ and we impose 
		the following additional assumption on the diagonal elements of the matrix $\bs{\S}$.
	%
	\begin{condition}\label{as:var1}
		There exists a constant $c_* > 0$ such that the diagonal elements $\s_{i}^2$ of $\bs{\S}$ satisfy the condition $\min_{1\le i\le d}\s_{i}^2 \ge c_*$.
	\end{condition}
	Thus, the minimum of $\sigma_i^2$'s is controlled by a positive constant that does not depend on $d$. It implies that $\|{\bs{\S}}\|_F\gtrsim \sqrt{d}$. On the other hand, in this section, we do not need Assumption~\ref{as:var0}.
	
	Under Assumption~\ref{as:var1}, it is convenient to work with the normalized observations $\tilde Y_i= \tilde \t_i +\s \tilde{\varepsilon}_i$ for $i\in [d]$, where $\tilde \t_i=\t_i/\s_i$, and $\tilde{\varepsilon}_i=\varepsilon_i/\s_i$ is a standard Gaussian random variable. Thus, we replace the covariance matrix ${\bs{\S}}$ by the correlation matrix $\tilde{\bs{\S}}$ with entries
	\[
	\tilde{\bs{\S}}_{ij}=\bs{\S}_{ij}/(\s_i\s_j),\quad i,j\in[d].
	\]
	In this section, we propose estimators for the unknown noise level $\s^2$ that will be further used to construct the estimators of functionals adaptive to the noise level. We first provide an estimator~$\hat\s_S^2$, which is rate-optimal in the sparse regime $s\le \|\tilde{\bs{\S}}\|_F$.
	Next, we construct a refined estimator~$\hat \s_D^2$ that attains the optimal rate for all $s\lesssim d$.

	\subsection{Estimating $\s^2$ in the sparse regime}
	The first estimator $\hat\s_S^2$, designed for the sparse regime, is inspired by the techniques of robust estimation~\cite{HR09}.
	Note that all observations $\tilde Y_i$ with $i\not\in\cS$ are distributed as $\mathcal{N}(0,\s^2)$, and thus estimating $\s^2$ can be equivalently viewed as a robust estimation problem, where the observations with non-zero $ \tilde \t_i$'s play the role of outliers.
	In particular,
	let $\hat F_{Y^2}$ and $\hat F_{\varepsilon^2}$  be the empirical cumulative distribution functions of $\{\tilde Y_i^2: i\in[d]\}$ and $\{\tilde \varepsilon_i^2:i\in[d]  \}$, respectively.
	Then,
	\begin{equation}
		\label{eq:FY-Feps}
		\sup_{t> 0}|\hat F_{Y^2}(t) - \hat F_{\varepsilon^2}(t/\sigma^2)|\le \frac{\|\bs{\t}\|_0}{d}.
	\end{equation}
		Considering here some fixed $t>0$, approximating $\hat F_{\varepsilon^2}$ by the cumulative distribution function of the chi-squared distribution with one degree of freedom $\chi^2_1$ (that we denote by $F$) and solving $\hat F_{Y^2}(t)\approx F(t/\sigma^2)$ leads to the following estimator of $\s^2$: 
	\begin{equation}
		\label{eq:hat-sigma-t-sparse}
		\hat\s_S^2(t) := \frac{t}{F^{-1}(\hat F_{Y^2}(t))}.
	\end{equation}
	A relevant choice of $t$ in \eqref{eq:hat-sigma-t-sparse} is to be determined. 
		Notice that for any deterministic $t>0$ this estimator is not well-defined since the denominator in \eqref{eq:hat-sigma-t-sparse} can be 0 with positive probability. Indeed, for any $t>0$ there exist $\bs{\t} \in B_0(s, d)$ such that $\prob[\hat F_{Y^2}(t)=0]>0$. Thus, we will only define $\hat\s_S^2(t)$ by \eqref{eq:hat-sigma-t-sparse} if the denominator in \eqref{eq:hat-sigma-t-sparse} is non-zero. {\rev Otherwise, we set $\hat\s_S^2(t) = \sum_{i = 1}^d Y_i^2 / d$; we note that the conclusion remains valid for any other choice of this value.}
	The next lemma provides a bound on the estimation error of $\hat\s_S^2(t)$ for any fixed $t>0$.
	\begin{lemma}
		\label{lmm:sigma-t-prob}
		There exists an absolute constant $C>0$ such that 
			for any $\bs{\t} \in B_0(s, d)$ and any $\bs{\S}$ satisfying Assumption~\ref{as:var1}
		the following holds.\\
		(i) For any $t> 0$ and $0\le u \le  1/2$ we have
		\[
		\prob\qth{  \abs{\frac{\hat{\s}_S^2(t)}{\sigma^2}-1}\ge u }
		\le
		\frac{C}{u^2}\pth{\frac{s^2}{d^2}+\frac{ \|\tilde{\bs{\S}}\|_F^2 (  (\frac{t}{\s^2})^2 + 1)e^{-\frac{t}{3\sigma^2}} }{d^2}}
		\frac{e^{2t/\sigma^2}}{t/\sigma^2}.
		\]
		(ii) If, in addition, $|t / \s^2 - 1| \le \frac{2}{3}$ and $s / d \le 0.01$ then
		\[
		\prob\qth{  \abs{\frac{\hat{\s}_S^2(t)}{\sigma^2}-1}\ge \frac{1}{2} }
		\le
		C \frac{ \|\tilde{\bs{\S}}\|_F^2}{d^2}.
		\]
	\end{lemma}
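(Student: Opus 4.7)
Write $x:=t/\sigma^2$. Since $\hat\sigma_S^2(t)/\sigma^2 = x/F^{-1}(\hat F_{Y^2}(t))$ and $F^{-1}$ is strictly increasing, the event $\{|\hat\sigma_S^2(t)/\sigma^2-1|\ge u\}$ (restricted to its finite realizations; the ``$=+\infty$'' case is absorbed into the lower tail) is contained in the event
\[
\{\hat F_{Y^2}(t)\le F(x/(1+u))\}\cup\{\hat F_{Y^2}(t)\ge F(x/(1-u))\},
\]
which in turn implies $|\hat F_{Y^2}(t)-F(x)|\ge \Delta_u(x)$ with
\[
\Delta_u(x):=\min\bigl(F(x)-F(x/(1+u)),\,F(x/(1-u))-F(x)\bigr).
\]
Using that the $\chi^2_1$ density $f(y)=(2\pi y)^{-1/2}e^{-y/2}$ is decreasing, and exploiting $u\le 1/2$ so that $[x/(1+u),x/(1-u)]\subseteq[2x/3,2x]$, a one-line mean value estimate yields $\Delta_u(x)\gtrsim u\,x\,f(2x)\gtrsim u\sqrt{x}\,e^{-x}$, hence $1/\Delta_u(x)^2\lesssim e^{2x}/(u^2 x)$, which is exactly the leading factor in the claimed bound.

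\textbf{Decomposition via sparsity.} Using \eqref{eq:FY-Feps} and the triangle inequality,
\[
|\hat F_{Y^2}(t)-F(x)|\le \frac{s}{d}+|\hat F_{\varepsilon^2}(x)-F(x)|.
\]
Consequently, on the deviation event, either $s/d\gtrsim \Delta_u(x)$ (contributing the $s^2/d^2$ term after squaring), or $|\hat F_{\varepsilon^2}(x)-F(x)|\gtrsim \Delta_u(x)$, to which I apply Chebyshev's inequality. This gives
\[
\prob\bigl[|\hat\sigma_S^2/\sigma^2-1|\ge u\bigr]\lesssim \frac{1}{\Delta_u(x)^2}\Bigl(\frac{s^2}{d^2}+\operatorname{Var}[\hat F_{\varepsilon^2}(x)]\Bigr).
\]

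\textbf{Variance via Hermite/Mehler.} The technical heart is the bound
\[
\operatorname{Var}[\hat F_{\varepsilon^2}(x)]=\frac{1}{d^2}\sum_{i,j}\operatorname{Cov}\!\bigl[\one_{|\tilde\varepsilon_i|\le\sqrt{x}},\,\one_{|\tilde\varepsilon_j|\le\sqrt{x}}\bigr]\lesssim \frac{\|\tilde{\bs\Sigma}\|_F^2}{d^2}\,(x^2+1)\,e^{-x/3}.
\]
I expect this to be the main obstacle: for correlated standard Gaussians with correlation $\rho=\tilde{\bs\Sigma}_{ij}$, Mehler's formula gives, since $\one_{|Z|\le\sqrt{x}}$ is even,
\[
\operatorname{Cov}=\sum_{k\ge 1}\frac{\rho^{2k}}{(2k)!}\bigl(\esp[H_{2k}(Z)\one_{|Z|\le\sqrt{x}}]\bigr)^{2}= 4\phi(\sqrt{x})^2\sum_{k\ge1}\frac{\rho^{2k}}{(2k)!}H_{2k-1}(\sqrt{x})^2,
\]
using the integration-by-parts identity $\esp[H_{2k}(Z)\one_{|Z|\le\sqrt{x}}]=-2H_{2k-1}(\sqrt{x})\phi(\sqrt{x})$. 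Factoring out $\rho^2$, the remaining sum can be controlled by splitting at a threshold $k_0$: small $k$ give polynomial factors in $x$ via $|H_{2k-1}(\sqrt{x})|\lesssim (1+x)^{k-1/2}$, while for large $k$ I use the crude bound $\rho^{2k-2}\le 1$ together with Parseval to absorb the tail into the sub-Gaussian factor $e^{-x/3}$ (weakening the sharper $e^{-x/2}$ from $\phi(\sqrt{x})^2$). Both diagonal ($\rho=1$) and off-diagonal terms are covered, and summing yields the $\|\tilde{\bs\Sigma}\|_F^2$ factor. Plugging into Chebyshev and combining with the $\Delta_u(x)^{-2}$ factor completes (i).

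\textbf{Part (ii).} For $|t/\sigma^2-1|\le 2/3$ we have $x\in[1/3,5/3]$, hence $\Delta_{1/2}(x)$ is a positive absolute constant and the factor $e^{2x}(x^2+1)e^{-x/3}/x$ in (i) is bounded. Moreover, under $s/d\le 0.01$ the sparsity shift satisfies $s/d\le \Delta_{1/2}(x)/2$, so the deviation event forces $|\hat F_{\varepsilon^2}(x)-F(x)|\ge \Delta_{1/2}(x)/2$. A direct application of Chebyshev with the variance bound from Step~3 then gives $\prob[\cdot]\lesssim \|\tilde{\bs\Sigma}\|_F^2/d^2$, which is the statement of (ii).
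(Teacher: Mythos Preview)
Your argument is correct and follows the same skeleton as the paper: convert the event $|\hat\sigma_S^2(t)/\sigma^2-1|\ge u$ into $|\hat F_{Y^2}(t)-F(x)|\ge\Delta_u(x)$, lower-bound $\Delta_u(x)\gtrsim u\sqrt{x}\,e^{-x}$, shift from $\hat F_{Y^2}$ to $\hat F_{\varepsilon^2}$ via \eqref{eq:FY-Feps}, and apply Chebyshev with a variance bound on $\hat F_{\varepsilon^2}(x)$. The paper packages the $s/d$ shift with the algebraic inequality $1\wedge\frac{x}{[(y-a)_+]^2}\le\frac{2(x+a^2)}{y^2}$, but your case split achieves the same thing. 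Part~(ii) is handled identically in both.

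The genuine difference is how you prove the key covariance bound $\cov[\one(\zeta^2\le x),\one(\eta^2\le x)]\lesssim\nu^2(x^2+1)e^{-x/3}$ (the paper's Lemma~\ref{lmm:correlated-gaussian}\ref{lmm:cdf}). The paper argues by cases on $|\nu|$ and, for small $|\nu|$, writes the covariance as an explicit integral in the conditional Gaussian and Taylor-expands $\Phi$ to second order. Your Mehler route is different and in fact cleaner than you present it: since the indicator is even, only even Hermite modes contribute, and $\rho^{2k}\le\rho^2$ for $k\ge1$ gives immediately
\[
\cov=\sum_{k\ge1}\rho^{2k}\,(2k)!\,c_{2k}^2\le\rho^2\sum_{k\ge1}(2k)!\,c_{2k}^2=\rho^2\,\var[\one(|Z|\le\sqrt{x})]=\rho^2 F(x)(1-F(x))\lesssim\rho^2 e^{-x/2},
\]
by Parseval. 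No head/tail split at a threshold $k_0$ is needed; your splitting scheme works but is unnecessary, and the direct Parseval identity already delivers a bound that is \emph{sharper} than the paper's $(x^2+1)e^{-x/3}$. So your approach and the paper's coincide structurally, but your treatment of the covariance lemma is a legitimately different (and slightly stronger) argument.
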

	Lemma~\ref{lmm:sigma-t-prob} suggests that the estimator $\hat\s_S^2(t)$ achieves  a reasonable accuracy for $t\asymp \sigma^2$. However, such choice of $t$ is not realizable since $\sigma$ is unknown. Moreover, for any deterministic $t>0$ the expected error of  $\hat\s_S^2(t)$  
		is not bounded uniformly over $\bt$ since $\hat\s_S^2(t)$ can be $+\infty$ with some positive probability as pointed out above.
	Therefore, we turn to an alternative solution that consists in choosing a random $t=\hat t$ based on the sample. Note that the standard sample splitting strategy,
	under which $\hat t$ can be independently chosen, is not a viable option since we have a model with dependent  observations.
	We avoid sample splitting and propose a data-driven threshold $\hat t$ determined by scanning $\hat F_{Y^2}$ over a dyadic grid, where $\hat F_{Y^2}$ is based on the same set of observations as $\hat{\s}_S^2(\cdot)$ \footnote{
		The choice of constant 0.5 in the definition of $\hat t$ is not crucial. Replacing 0.5 by any other constant within $(0,1)$ only changes the resulting rates by a constant factor. }:
	\begin{equation}
		\label{eq:def-hat-t}
		\hat t
		:= \min \sth{t=2^\ell : \ell\in \ZZ, \hat F_{Y^2}(t) \ge 0.5
		}.
	\end{equation}
	We consider now the following estimator of $\s^2$:
	\[
	\hat\s_S^2 := \hat\s_S^2(\hat t).
	\]
	The next theorem gives a bound on the estimation error of $\hat\s_S^2$.
	
	\begin{theorem}
		\label{thm:hat-sigma-sparse}
		There exist absolute constants $C>0$ and $c>0$ such that for any $s\le c d$, 
			any $\bs{\t} \in B_0(s, d)$ and any $\bs{\S}$ satisfying Assumption~\ref{as:var1}
		we have
		\[
		\esp\left[\frac{|\hat{\s}_S^2 - \s^2|}{\s^2}\right] \le C \left(\frac{s}{d} + \frac{\|\tilde{\bs{\S}}\|_F}{d}\right).
		\]
	\end{theorem}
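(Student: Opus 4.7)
The core difficulty is that $\hat t$ is chosen from the same data that enter $\hat\s_S^2(\cdot)$, so the sample-splitting approach of \cite{comminges2021adaptive} is unavailable under correlated noise. My plan is to localise $\hat t$ into a bounded dyadic range on a high-probability event $E$, to handle the data-dependence by a union bound over the $O(1)$ dyadic candidates in that range, and to convert the resulting tail bounds into the claimed first-moment estimate. The main obstacle will be the last step: the probability of $E^c$ decays only as $\|\tilde{\bs{\S}}\|_F^2/d^2$ while the theorem requires a first-moment rate of $\|\tilde{\bs{\S}}\|_F/d$, so a Cauchy--Schwarz bound on $E^c$ must be paid for by a uniform $L^2$ control on $\hat\s_S^2/\s^2$ that does not degrade despite $\hat t$ itself being data-driven.

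The first step is to show that $\hat F_{Y^2}$ concentrates around $F(\cdot/\s^2)$. By \eqref{eq:FY-Feps}, $|\hat F_{Y^2}(t)-\hat F_{\varepsilon^2}(t/\s^2)|\le s/d$. Since the indicator $\one\{x^2\le u\}$ is an even function of $x$, only even Hermite coefficients contribute to its Mehler expansion, so the covariance of $\one\{\tilde\varepsilon_i^2\le u\}$ and $\one\{\tilde\varepsilon_j^2\le u\}$ between two standard Gaussians with correlation $\tilde{\bs{\S}}_{ij}$ is of order $\tilde{\bs{\S}}_{ij}^2$. Summing over pairs yields $\var(\hat F_{\varepsilon^2}(u))\lesssim\|\tilde{\bs{\S}}\|_F^2/d^2$, and Chebyshev gives $\prob[|\hat F_{\varepsilon^2}(u)-F(u)|\ge\eta]\lesssim\|\tilde{\bs{\S}}\|_F^2/(\eta d)^2$ at any fixed $u$. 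Setting $m_0=F^{-1}(1/2)$, define $E=\{\hat t\in[\s^2 m_0/2,\,4\s^2 m_0]\}$, a range of ratio $8$ containing $O(1)$ dyadic integers. If $\hat t>4\s^2 m_0$, monotonicity of $\hat F_{Y^2}$ forces $\hat F_{Y^2}(2\s^2 m_0)<1/2$; if $\hat t<\s^2 m_0/2$, it forces $\hat F_{Y^2}(\s^2 m_0/2)\ge 1/2$. Each case forces $|\hat F_{\varepsilon^2}-F|$ at the corresponding normalised point ($2m_0$ or $m_0/2$) to exceed an absolute positive constant, using $s/d\le c$ for small enough $c$. The Chebyshev bound then yields $\prob[E^c]\lesssim\|\tilde{\bs{\S}}\|_F^2/d^2$.

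On $E$, $\hat\s_S^2(\hat t)$ coincides with $\hat\s_S^2(t_k)$ for one of $O(1)$ values $t_k$ with $t_k/\s^2\in[m_0/2,\,4m_0]$, a range in which the exponential factor $((t/\s^2)^2+1)e^{-t/(3\s^2)}\cdot e^{2t/\s^2}/(t/\s^2)$ appearing in Lemma~\ref{lmm:sigma-t-prob}(i) is an absolute constant. Applying that lemma to each $t_k$ and taking a union bound gives
\[
\prob\left[\Big|\tfrac{\hat\s_S^2(\hat t)}{\s^2}-1\Big|\ge u,\;E\right]\lesssim \frac{s^2+\|\tilde{\bs{\S}}\|_F^2}{u^2 d^2},\qquad 0<u\le 1/2.
\]
Writing $W=\hat\s_S^2/\s^2-1$ and decomposing $\esp|W|=\int_0^{1/2}\prob[|W|\ge u]\,du+\esp[|W|\one\{|W|>1/2\}]$, using $\prob[|W|\ge u]\le\prob[|W|\ge u,E]+\prob[E^c]$ in the first integral and integrating $\min\{1,a/u^2\}$ with $a=(s+\|\tilde{\bs{\S}}\|_F)^2/d^2$ yields a contribution of order $\sqrt a+\tfrac12\prob[E^c]\lesssim (s+\|\tilde{\bs{\S}}\|_F)/d$.

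For the remaining term $\esp[|W|\one\{|W|>1/2\}]$, split along $E$ and $E^c$. On $E$, the defining inequality $\hat F_{Y^2}(\hat t)\ge 1/2$ forces $\hat\s_S^2\le\hat t/F^{-1}(1/2)\le 4\s^2$, hence $|W|\le 3$, and the contribution is at most $3\prob[|W|>1/2,\,E]\lesssim (s^2+\|\tilde{\bs{\S}}\|_F^2)/d^2\le (s+\|\tilde{\bs{\S}}\|_F)/d$ by Lemma~\ref{lmm:sigma-t-prob}(i) at $u=1/2$. On $E^c$, Cauchy--Schwarz gives $\esp[|W|\one_{E^c}]\le\sqrt{\esp[W^2]\,\prob[E^c]}$. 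The almost-sure bound $\hat t\le 2\,U^{(\lceil d/2\rceil)}$, where $U^{(k)}$ is the $k$-th order statistic of the clean squared noises $\{(\s\tilde\varepsilon_i)^2:i\notin\cS\}$, holds for $s\le\lfloor d/2\rfloor$: at least $\lceil d/2\rceil$ of the $\tilde Y_i^2$ fall below $U^{(\lceil d/2\rceil)}$, so the empirical median of $\{\tilde Y_i^2\}$ lies below $U^{(\lceil d/2\rceil)}$, and $\hat t$ is at most twice that median by dyadic rounding. Since each $\tilde\varepsilon_i$ is marginally standard Gaussian, standard moment bounds on the median order statistic give $\esp[(U^{(\lceil d/2\rceil)})^2]=O(\s^4)$, whence $\esp[W^2]=O(1)$ and $\esp[|W|\one_{E^c}]\lesssim\|\tilde{\bs{\S}}\|_F/d$. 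Summing the four contributions proves the theorem.
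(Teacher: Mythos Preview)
Your proof is correct and follows essentially the same strategy as the paper's: localise $\hat t$ to a finite dyadic window with probability $1-O(\|\tilde{\bs{\S}}\|_F^2/d^2)$, handle the good event by a union bound over the $O(1)$ dyadic candidates there, and control the complement via Cauchy--Schwarz together with a second-moment bound on $\hat t/\s^2$ obtained from a marginal order-statistic/quantile argument. The only differences are in execution---the paper bounds $\esp[|W|\one(E)]$ directly via the mean value theorem (relating $|W|$ on $E$ to $|F(\hat t/\s^2)-\hat F_{Y^2}(\hat t)|$, then summing over $t\in\calT$) rather than integrating the tail bound of Lemma~\ref{lmm:sigma-t-prob}(i), and it packages the second-moment control of $\hat t$ as a separate lemma on empirical quantiles---but the underlying ideas are the same.
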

	
	
	It can be seen from the proof of Theorem~\ref{thm:hat-sigma-sparse} that the threshold \eqref{eq:def-hat-t} obtained by a search on a dyadic grid satisfies $\hat t \asymp \s^2$ with high probability. An alternative approach could be to use the sample median $\hat t = \min \{t: \hat F_{Y^2}(t) \ge 0.5\}$, which is also of the order of $\s^2$ with high probability. 
		The main limitation of this approach is that, under the dependent data, the resulting estimator $\hat{\s}^2(\hat{t})$ becomes difficult to analyze.
	On the other hand, as we prove below, the value $\hat{t}$ defined in \eqref{eq:def-hat-t} belongs with high probability to a \emph{finite set}
	\[
	\calT
	:= \sth{ t=2^{\ell}: \ell\in \ZZ, \frac{\sigma^2}{3}\le t \le \frac{3}{2}\sigma^2  }.
	\]
	This allows us to control $\esp\left[\frac{|\hat{\s}_S^2 - \s^2|}{\s^2}\right]$ by analyzing the convergence of $\hat{F}_{Y^2}(t)$ for every fixed $t \in \calT$. In other words, with our dyadic grid search we restrict the ``degrees of freedom'' in choosing $\hat{t}$, which makes the resulting estimator more robust against data dependence.
	{\rev In the next section, we provide another estimator of $\s^2$ that outperforms $\hat{\s}_S^2$ in the dense regime.}


	\subsection{{\rev Estimating $\s^2$ in the dense regime}}
		Note that the estimator \eqref{eq:hat-sigma-t-sparse} can be viewed as a method of moments estimator based on the empirical moments of indicator functions of $\tilde Y_i^2$'s.
	We can develop a similar argument with the empirical moments of another family of bounded functions, namely, the cosine functions of $\tilde Y_i$'s. 
	We consider the following empirical moments:
	\[
	\hat\varphi_d(u)
	:= \frac{1}{d}\sum_{i=1}^d \cos(u \tilde Y_i ), \quad u>0.
	\]
		Analogously to \eqref{eq:FY-Feps}, when $s$ is small compared to $d$ we can approximate $|\hat\varphi_d(u)|$ by the term $|\frac{1}{d}\sum_{i=1}^d \cos(u \sigma\tilde \varepsilon_i )|$, which in turn is approximated by the value of the normal characteristic function  $\varphi(u)=\exp(-u^2 \sigma^2/2)$. Solving $\exp(-u^2 \sigma^2/2) \approx |\hat\varphi_d(u)|$ suggests  $-\frac{2}{u^2}\log | \hat\varphi_d(u)|$ as an estimator of $\sigma^2$, under some choice of $u$.
	
	We set $\lambda =
	\frac{1\vee \log(s/\|\tilde{\bs{\S}}\|_F)}{6}$ and consider the estimator
	\[
	\tilde \s^2_D :=
	- \frac{2 \hat t }{\lambda}\log \Big| \hat\varphi_d \Big(\sqrt{\lambda/\hat t}\Big) \Big|,
	\]
	where $\hat t$ is defined in \eqref{eq:def-hat-t}.
	Our final estimator of the noise level is $\hat \s_D^2 := \min\{\tilde \s^2_D, 2\hat \s_S^2\}$.

	\begin{theorem}
		\label{thm:hat-sigma-dense}
		There exist absolute constants $C>0$ and $c>0$ such that for any $s\le c d$, 
			any $\bs{\t} \in B_0(s, d)$ and any $\bs{\S}$ satisfying Assumption~\ref{as:var1}
		we have
		\[
		\esp\left[\frac{|\hat\s_D^2 - \s^2|}{\s^2}\right] \le C \left(\frac{s}{d (1 \vee \log({s}/{\|\tilde{\bs{\S}}\|_F}) )} + \frac{\|\tilde{\bs{\S}}\|_F}{d}\right).
		\]
	\end{theorem}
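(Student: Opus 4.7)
The plan is to decompose the estimation error of $\hat\sigma_D^2$ according to whether a high-probability good event holds, on which the characteristic-function-based estimator $\tilde\sigma_D^2$ is accurate, and to absorb the complementary contribution using the truncation $\hat\sigma_D^2 \le 2\hat\sigma_S^2$ together with Theorem~\ref{thm:hat-sigma-sparse}. The key observation driving the dense-regime improvement is that inverting the Gaussian characteristic function $\varphi(u) = e^{-u^2\sigma^2/2}$ through $\log$ divides the estimation error by $\lambda$, producing the $1/\log(s/\|\tilde{\bs{\S}}\|_F)$ factor. The first ingredient is localization of $\hat t$: the dyadic-grid analysis underlying Theorem~\ref{thm:hat-sigma-sparse} (already noted in the discussion as ``$\hat t\in\calT$ with high probability'') shows that $\calA := \{\hat t \in \calT\}$ has probability at least $1 - C(s + \|\tilde{\bs{\S}}\|_F)/d$, and on $\calA$ the argument $u = \sqrt{\lambda/\hat t}$ satisfies $u^2\sigma^2 \asymp \lambda$ and $\varphi(u) \ge e^{-3\lambda/2}$.

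The core technical step is to control the concentration of $\hat\varphi_d(u)$ around $\varphi(u)$. Using the joint Gaussianity of $(\tilde\varepsilon_i,\tilde\varepsilon_j)$ with correlation $\tilde\Sigma_{ij}$, the product-to-sum identity yields the closed-form covariance
$$\cov(\cos(u\sigma\tilde\varepsilon_i),\cos(u\sigma\tilde\varepsilon_j)) = e^{-u^2\sigma^2}\bigl[\cosh(u^2\sigma^2\tilde\Sigma_{ij}) - 1\bigr],$$
and expanding $\cosh(x) - 1 = \sum_{k \ge 1} x^{2k}/(2k)!$ combined with $|\tilde\Sigma_{ij}|\le 1$ gives
$$\sum_{i,j} e^{-u^2\sigma^2}\bigl[\cosh(u^2\sigma^2\tilde\Sigma_{ij}) - 1\bigr] \le \|\tilde{\bs{\S}}\|_F^2 (1 - e^{-u^2\sigma^2})^2/2 \le \|\tilde{\bs{\S}}\|_F^2/2,$$
hence $\var(\hat\varphi_d(u)) \lesssim \|\tilde{\bs{\S}}\|_F^2/d^2$ (a parallel product-to-sum computation treats signal-signal and cross terms, any excess contribution being absorbed via AM-GM). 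The bias is explicit: $\esp\hat\varphi_d(u) - \varphi(u) = (\varphi(u)/d)\sum_{i\in\cS}(\cos(u\tilde\theta_i) - 1)$, bounded by $2s\varphi(u)/d$. Since $-\frac{2\hat t}{\lambda}\log\varphi(u) = \sigma^2$ identically for $u = \sqrt{\lambda/\hat t}$, we have $\tilde\sigma_D^2 - \sigma^2 = -\frac{2\hat t}{\lambda}\log|\hat\varphi_d(u)/\varphi(u)|$, and on $\calB := \{|\hat\varphi_d(u)/\varphi(u) - 1| \le 1/2\}$ the expansion $|\log(1+x)| \le 2|x|$ yields $|\tilde\sigma_D^2 - \sigma^2|/\sigma^2 \lesssim (1/\lambda)|\hat\varphi_d(u) - \varphi(u)|/\varphi(u)$. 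Taking expectation and using $1/\varphi(u) \le e^{3\lambda/2}$ on $\calA$, the contribution from $\calA\cap\calB$ is $\lesssim (1/\lambda)\bigl(s/d + e^{3\lambda/2}\|\tilde{\bs{\S}}\|_F/d\bigr)$. With $\lambda = (1\vee\log(s/\|\tilde{\bs{\S}}\|_F))/6$, we get $e^{3\lambda/2}\|\tilde{\bs{\S}}\|_F = s^{1/4}\|\tilde{\bs{\S}}\|_F^{3/4}$, which Young's inequality bounds by $s/4 + 3\|\tilde{\bs{\S}}\|_F/4$, collapsing the total to the claimed rate $s/(d\lambda) + \|\tilde{\bs{\S}}\|_F/d$ in both sparse and dense regimes.

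Finally, I would handle the complementary event via $\hat\sigma_D^2 \le 2\hat\sigma_S^2$: writing $(|\hat\sigma_D^2 - \sigma^2|/\sigma^2)\one_{(\calA\cap\calB)^c} \le (2\hat\sigma_S^2/\sigma^2 + 1)\one_{(\calA\cap\calB)^c}$, I would apply Cauchy-Schwarz combined with a tail bound on $\hat\sigma_S^2/\sigma^2$ from Theorem~\ref{thm:hat-sigma-sparse}, Chebyshev on the variance bound of Step 2, and the probability bound from Step 1, to conclude that this contribution fits within the target rate. A subsidiary subtlety is that on $\calA\cap\calB$ the truncation may still activate if $\tilde\sigma_D^2 > 2\hat\sigma_S^2$, but for $\lambda$ large enough the error bound on $\tilde\sigma_D^2$ combined with the sub-event $\{|\hat\sigma_S^2/\sigma^2 - 1| \le 1/4\}$ (again of probability $\ge 1 - C(s + \|\tilde{\bs{\S}}\|_F)/d$) rules this out, leaving only a negligible residual that is once more absorbed by Theorem~\ref{thm:hat-sigma-sparse}. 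The main obstacle will be the variance identity in Step 2: the closed-form Gaussian covariance is what allows the variance to scale with $\|\tilde{\bs{\S}}\|_F^2$ rather than $d^2$, and the delicate balancing of the exponential blow-up $e^{C\lambda}$ in $1/\varphi(u)$ against the factor $1/\lambda$ via Young's inequality---where the precise constant $1/6$ in $\lambda$ ensures the Young exponent is strictly less than $1$---is the algebraic coincidence that ties the whole analysis together to yield the sharp rate.
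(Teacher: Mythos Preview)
Your approach mirrors the paper's proof closely: both define a good event combining $\hat t \in \calT$, $\hat\sigma_S^2 \asymp \sigma^2$, and $\hat\varphi_d \asymp \varphi$; both use the log-inversion $|\tilde\sigma_D^2-\sigma^2|/\sigma^2 \lesssim \lambda^{-1}|\hat\varphi_d/\varphi - 1|$ on the good event and the truncation $\hat\sigma_D^2 \le 2\hat\sigma_S^2$ together with Cauchy--Schwarz on the complement.

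One point needs care. Your claim $\var(\hat\varphi_d(u)) \lesssim \|\tilde{\bs{\S}}\|_F^2/d^2$ is too optimistic. The signal--signal covariances $\cov(\cos(u\tilde Y_i),\cos(u\tilde Y_j))$ for $i,j\in\cS$ carry a term of order $|\tilde{\bs{\S}}_{ij}|$ coming from the $\sin(u\tilde\theta_i)\sin(u\tilde\theta_j)$ part of the product-to-sum identity; these sum to at most $s\|\tilde{\bs{\S}}\|_F$, which in the dense regime $s>\|\tilde{\bs{\S}}\|_F$ is not dominated by $\|\tilde{\bs{\S}}\|_F^2$. Applying AM--GM at this stage (replacing $s\|\tilde{\bs{\S}}\|_F$ by $s^2+\|\tilde{\bs{\S}}\|_F^2$) injects an $s^2/d^2$ into the variance, which after the $e^{u^2\sigma^2/2}$ blow-up produces $e^{3\lambda/2}s/(\lambda d)=(s/\|\tilde{\bs{\S}}\|_F)^{1/4}\,s/(\lambda d)$, strictly exceeding the target $s/(\lambda d)$. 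The paper's Lemma~\ref{lmm:phi-multiplicative} keeps the $s\|\tilde{\bs{\S}}\|_F$ term intact, yielding an additional good-event contribution $\lambda^{-1}e^{3\lambda/2}\sqrt{s\|\tilde{\bs{\S}}\|_F}/d$; with $e^{3\lambda/2}=(s/\|\tilde{\bs{\S}}\|_F)^{1/4}$ this equals $s^{3/4}\|\tilde{\bs{\S}}\|_F^{1/4}/(\lambda d)$, and Young's inequality applied \emph{at this final stage} bounds it by $s/(\lambda d)$. So the fix is simply to postpone the AM--GM/Young step until after the $e^{3\lambda/2}$ factor has been introduced; with that adjustment your sketch goes through and matches the paper.
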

	
	If we specify the difference between sparse and dense regimes, the bound of Theorem~\ref{thm:hat-sigma-dense} exhibits the rate  
	\[
	\tilde \psi (s, \bs{\S}) := \left\{
	\begin{array}{lc}
		\frac{\|\tilde{\bs{\S}}\|_F}{d} & \textrm{if}\; s \le \|\tilde{\bs{\S}}\|_F, \\
		\frac{s}{d(1 \vee \log(s / \|\tilde{\bs{\S}}\|_F))} & \textrm{otherwise.}
	\end{array}
	\right.
	\]
	

	Theorem~\ref{thm:hat-sigma-dense} provides a generalization of the upper bound proved in~\cite[Proposition 7]{comminges2021adaptive} for the case of i.i.d.\ noise. 
	Compared to the techniques used in~\cite{comminges2021adaptive}, our correlated noise setting brings fundamental challenges.
		The analysis in~\cite{comminges2021adaptive} relies on exponential concentration inequalities for the uniform deviations that we are short of as the observations are dependent and the spectral norm of $\bs{\S}$ can be as large as $O(\sqrt{d})$.
		The sample splitting technique for obtaining multiple independent estimates used in~\cite{comminges2021adaptive} also breaks down when the observations are correlated.
		It turns out that for our estimator, that does not use any sample splitting, second-order moment inequalities suffice to achieve the desired rate. 
			Note that we obtain an improvement upon Proposition 7 in~\cite{comminges2021adaptive} even in the case of i.i.d. observations since our estimator does not need sample splitting. 

		\section{Estimating the $\ell_2$-norm when the noise level is unknown}
		\label{sec:unknown-sigma}

		In this section, we use the estimators of $\s^2$ proposed in Section~\ref{sec:sigma} in order to construct estimators of the $\ell_2$-norm $\| \bs{\t} \|_2$ when the noise level $\sigma$ is unknown. 
		
		Assume first that the Frobenius norm of the covariance matrix $\|\bs{\S}\|_F$ is known. We start by considering an estimator of $\|\bs{\t}\|_2$ defined as follows: 
		$$\hat{N}^* := \sqrt{|\hat{Q}^*|},$$ where
		\begin{equation}
			\label{eq:Qstar}
			\hat{Q}^* :=
			\begin{cases}
				\sum_{i=1}^d Y_i^2 \one\{|Y_i| > \hat{\s}_S \s_{i} \tau \} - \hat{\s}_S^2 \alpha_s \sum_{i=1}^d \s_{i}^2, & \textrm{if}\; s \le \|\bs{\S}\|_F, \\
				\sum_{i=1}^d (Y_i^2 - \hat{\s}_D^2 \s_{i}^2), & \textrm{otherwise}.
			\end{cases}
		\end{equation}
		Here, $\alpha_s := {\mathbf E}\left[Z^2 \one \{|Z| \ge \tau \}\right]$ with $Z\sim \mathcal{N}(0,1)$, and $\tau = 3\sqrt{\log(1 + \|\bs{\S}\|_F^2 / s^2)}$.
		Set
		\[
		\psi^*(s, \bs{\S}): = \phi^*(s, \|\bs{\S}\|_F^2) = \left\{
		\begin{array}{lc}
			s \log(1 + \|\bs{\S}\|_F^2 / s^2) & \textrm{if}\; s \le \|\bs{\S}\|_F, \\
			\frac{s}{1 \vee \log(s / \|\bs{\S}\|_F)} & \textrm{otherwise.}
		\end{array}
		\right.
		\]
		Our aim in this section is to show that $\s^2\psi^*(s, \bs{\S})$ is the minimax optimal rate for the squared error of estimating $\|\bs{\t}\|_2$ on $\Theta(s,d)$ if Assumptions~\ref{as:var0} and~\ref{as:var1} hold, and that the estimator $\hat N^*$ or its modifications attain this rate.
		The following theorem provides bounds on the accuracy of the estimator $\hat{N}^*$.
		\begin{theorem}\label{thm:unknownvar1}
			There exists an absolute constant $\alpha \in (0, 1)$ such that, for any 
				$\bs{\t} \in B_0(s, d)$ with $s$ satisfying that $s \le \alpha d$ and any  $\bs{\S}$ satisfying Assumptions~\ref{as:var0} and~\ref{as:var1},
			the following holds.
			\begin{enumerate}
				\item[(i)] In the regime $s > \|\bs{\S}\|_F$, there exists an absolute constant $c_1 > 0$ such that
				\[
				\esp[(\hat{N}^* - \|\bs{\t}\|_2)^2] \le c_1 \s^2 \psi^*(s,\bs{\S}).
				\]
				\item[(ii)] In the regime $s \le \|\bs{\S}\|_F$, there exists an absolute constant $c_2>0$ such that, for any $\eta \in (c_2 \|\bs{\S}\|_F / d, 1)$, the following inequality holds with some constant $c_\eta > 0$ depending only on~$\eta$:
				\begin{equation}
					\label{eq:unknownvar1-sparse1}
					\prob\left((\hat{N}^* - \|\bs{\t}\|_2)^2 \le c_\eta \s^2 \psi^*(s, \bs{\S})\right) \ge 1 - \eta.
				\end{equation}
				\item[(iii)] If, in addition, $\bs{\S}$ is a positive definite matrix there exists an absolute constant $c_3 > 0$ such that
					\begin{equation}
						\label{eq:unknownvar1-sparse2}
						\begin{aligned}
							\esp[(\hat{N}^* - \|\bs{\t}\|_2)^2] \le & \; c_3 \left(\sqrt{\lambda_{\max}(\bs{\S}^{-1})} + 1\right) \s^2 \psi^*(s, \bs{\S}) \\
							&\qquad \qquad +
							c_3 \s^2 \frac{\|\bs{\S}\|_F^2}{d} \log(1 + \|\bs{\S}\|_F^2 / s^2).
						\end{aligned}
					\end{equation}
			\end{enumerate}
		\end{theorem}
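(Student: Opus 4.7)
The plan is to reduce the analysis of $\hat N^*$ to the oracle estimator $\hat Q$ of Theorem~\ref{thm:knownvar} via the decomposition $\hat Q^* = \hat Q + \Delta$, where $\hat Q$ is the Section~\ref{sec:known-sigma} estimator using the true $\sigma$ and $\Delta$ captures the substitution of $\hat\sigma_S^2$ or $\hat\sigma_D^2$ for $\sigma^2$. The first step is the elementary inequality $(\sqrt{|a|}-\sqrt{b})^2\le \min(|a-b|, (a-b)^2/b)$ for $b>0$, which gives
\[
(\hat N^* - \|\bs{\t}\|_2)^2 \le \min\!\Bigl\{|\hat Q^* - \|\bs{\t}\|_2^2|,\ (\hat Q^* - \|\bs{\t}\|_2^2)^2/\|\bs{\t}\|_2^2\Bigr\}.
\]
I split into the low-signal sub-case $\|\bs{\t}\|_2^2 \le \sigma^2\psi^*(s,\bs{\S})$ (first bound) and the high-signal sub-case (second bound). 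Since $\psi(s,\bs{\S})\le \psi^*(s,\bs{\S})$ and $\lambda_{\max}(\bs{\S}_{\cS,\cS})\le \psi^*(s,\bs{\S})$ in all regimes under Assumption~\ref{as:var0}, Theorem~\ref{thm:knownvar} applied to $\hat Q$ yields the target order $\sigma^2\psi^*(s,\bs{\S})$ in both sub-cases, reducing the task to controlling $\Delta$.

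For Part (i) (dense regime), $\Delta = (\sigma^2 - \hat\sigma_D^2)\sum_i\sigma_i^2$. Under Assumption~\ref{as:var1}, $\sum_i\sigma_i^2\asymp d$ and $\|\tilde{\bs{\S}}\|_F\asymp\|\bs{\S}\|_F$, and a direct computation in the dense regime yields $\tilde\psi(s,\bs{\S})\asymp \psi^*(s,\bs{\S})/d$. Hence Theorem~\ref{thm:hat-sigma-dense} gives $\esp[|\Delta|]\lesssim d\,\sigma^2\tilde\psi\lesssim \sigma^2\psi^*(s,\bs{\S})$, closing the low-signal sub-case. The high-signal sub-case requires the matching $L^2$ estimate $\esp[\Delta^2]\lesssim \sigma^4 \psi^*(s,\bs{\S})^2$; I obtain it by splitting on the event $G := \{|\hat\sigma_D^2/\sigma^2 - 1|\le 1/2\}$, whose complement has probability $\lesssim \tilde\psi$ by Markov and Theorem~\ref{thm:hat-sigma-dense}: on $G$, the $L^1$ bound closes the estimate, and on $G^c$ I exploit the truncation chain $\hat\sigma_D^2\le 2\hat\sigma_S^2\le 2\hat t/F^{-1}(1/2)$ together with a Gaussian-tail bound on $\hat t\le 2\max_i Y_i^2$ to absorb the remaining contribution into the target rate.

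For Part (ii) (sparse regime, high-probability bound), I work on the event $G := \{|\hat\sigma_S^2/\sigma^2 - 1| \le \delta\}$ for a small constant $\delta \in (0,1)$ to be chosen depending on $\eta$. On $G$, the data-driven threshold $\hat\sigma_S\sigma_i\tau$ differs from the oracle $\sigma\sigma_i\tau$ only by a multiplicative factor $1\pm O(\delta)$, and the contribution of indices whose $|Y_i|$ lies in the thin band between the two is controlled by a standard Gaussian-tail argument that perturbs the proof of \eqref{thmeq:hatn}; the $\alpha_s$-vs-$\beta_s$ correction is absorbed through $\esp|\hat\sigma_S^2-\sigma^2|$. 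Markov combined with Theorem~\ref{thm:hat-sigma-sparse} yields $\prob[G^c]\le C(s/d + \|\tilde{\bs{\S}}\|_F/d)/\delta \lesssim \|\bs{\S}\|_F/(\delta d)$ since $s\le \|\bs{\S}\|_F$ in the sparse regime, and choosing $\delta$ proportional to $\|\bs{\S}\|_F/(\eta d)$ (possible because $\eta > c_2\|\bs{\S}\|_F/d$) gives \eqref{eq:unknownvar1-sparse1}.

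For Part (iii), the positive definiteness of $\bs{\S}$ is used to upgrade the $L^1$ control on $\hat\sigma_S^2-\sigma^2$ of Theorem~\ref{thm:hat-sigma-sparse} to an $L^2$ control: the variance of $\hat F_{Y^2}(t)$ decomposes through the covariances $\cov(\one\{\tilde Y_i^2\le t\}, \one\{\tilde Y_j^2\le t\})$, and the minimal eigenvalue of $\bs{\S}$—equivalently $1/\lambda_{\max}(\bs{\S}^{-1})$—bounds these correlations uniformly, yielding the factor $\sqrt{\lambda_{\max}(\bs{\S}^{-1})}$ in \eqref{eq:unknownvar1-sparse2}. Integrating the high-probability bound of Part~(ii) against this $L^2$ refinement produces the first summand of \eqref{eq:unknownvar1-sparse2}; the additive correction $\sigma^2\|\bs{\S}\|_F^2\log(1+\|\bs{\S}\|_F^2/s^2)/d$ originates from the term $(\sum_i\sigma_i^2)^2\,\var(\hat\sigma_S^2)\,\tau^2/\|\bs{\t}\|_2^2$ in the high-signal sub-case, where $\tau^2 \asymp \log(1+\|\bs{\S}\|_F^2/s^2)$. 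The main obstacle throughout is precisely this passage from $L^1$ to $L^2$ control on $\hat\sigma^2-\sigma^2$, which I anticipate handling by a careful event-based decomposition keyed to the dyadic structure $\{\hat t\in\calT\}$.
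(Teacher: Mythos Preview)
Your Part~(iii) misidentifies the role of positive definiteness. The covariances $\cov(\one\{\tilde Y_i^2\le t\},\one\{\tilde Y_j^2\le t\})$ are already controlled by $\|\tilde{\bs{\S}}\|_F^2$ through Lemma~\ref{lmm:correlated-gaussian}\ref{lmm:cdf} with no spectral lower bound needed, so a minimal-eigenvalue assumption does nothing to sharpen the $L^2$ rate of $\hat\sigma_S^2$. In the paper the factor $\sqrt{\lambda_{\max}(\bs{\S}^{-1})}$ arises from a \emph{leave-one-out} argument (Lemmas~\ref{lem:squad1}--\ref{lem:squad}) for the threshold-difference term $\sum_{i\notin\cS}\varepsilon_i^2\bigl[\one(|\varepsilon_i|>\hat\sigma_S\sigma_i\tau/\sigma)-\one(|\varepsilon_i|>\sigma_i\tau)\bigr]$: one replaces $\hat\sigma_S$ by a version $\hat\sigma_{-i}$ built from $\bs Y_{-i}$ (they differ by $O(1/d)$), then conditions on $\bs\varepsilon_{-i}$. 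The conditional law of $\varepsilon_i$ is Gaussian with variance $1/(\bs{\S}^{-1})_{ii}$, and its density contributes the factor $\sqrt{(\bs{\S}^{-1})_{ii}}\le\sqrt{\lambda_{\max}(\bs{\S}^{-1})}$ when one bounds the probability that $|\varepsilon_i|$ lands in the thin random band. The additive term $\sigma^2\|\bs{\S}\|_F^2\log(1+\|\bs{\S}\|_F^2/s^2)/d$ then comes from summing the residual $\tau^2((s+\|\bs{\S}\|_F)/d)^2$ of this leave-one-out bound over $i\notin\cS$, not from a $\var(\hat\sigma_S^2)$ contribution as you suggest.

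Your Part~(i) also diverges from the paper's route, and the high-signal sub-case does not close as stated. You would need $\esp[\Delta^2]\lesssim\sigma^4\psi^*(s,\bs{\S})^2$, but on $G=\{|\hat\sigma_D^2/\sigma^2-1|\le 1/2\}$ you only have $|\Delta|\lesssim d\sigma^2$ pointwise, so combining with the $L^1$ bound yields $\esp[\Delta^2\one_G]\lesssim d\sigma^2\cdot\sigma^2\psi^*$, not $\sigma^4(\psi^*)^2$; on $G^c$ the Cauchy--Schwarz route with $\esp[\hat t^4]\lesssim\sigma^8$ and $\prob[G^c]\lesssim\tilde\psi$ also falls short. (An $L^2$ bound at rate $\tilde\psi$ can in fact be extracted by re-running the proof of Theorem~\ref{thm:hat-sigma-dense} using Lemma~\ref{lmm:phi-multiplicative} in $L^2$, but that is not the argument you sketched.) The paper sidesteps all of this by splitting $\hat N^*$ rather than $\hat Q^*$: the inequality $|\sqrt{|x+y|}-z|\le|\sqrt{|x|}-z|+\sqrt{|y|}$ with $x=\|\bt\|_2^2+2\sigma\bt^\top\bs\varepsilon$ and $y=\sigma^2\|\bs\varepsilon\|_2^2-\hat\sigma_D^2\sum_i\sigma_i^2$ reduces the task to $\esp|y|\lesssim\sigma^2\psi^*$, which is exactly the $L^1$ content of Theorem~\ref{thm:hat-sigma-dense}. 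Your Part~(ii) is broadly aligned with the paper (the good-event argument is Lemma~\ref{lem:unsconf}), though the paper's on-support/off-support splitting of $\hat N^*$ is cleaner than tracking the $\alpha_s$-versus-$\beta_s$ discrepancy in your $\hat Q^*=\hat Q+\Delta$ decomposition.
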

		
		Theorem~\ref{thm:unknownvar1} grants an expected squared risk bound with the desired rate $\s^2\psi^*(s,\bs{\S})$ in the dense regime $s > \|\bs{\S}\|_F$. However, in the sparse regime $s \le \|\bs{\S}\|_F$ Theorem~\ref{thm:unknownvar1} implies such a conclusion only under additional conditions. The bound \eqref{eq:unknownvar1-sparse1} provides a bound in probability under an additional ($\bs{\S}$-dependent) condition on $\eta$.  Part (iii) of Theorem~\ref{thm:unknownvar1} implies that,  under the additional assumption that $\bs{\S}$ is positive definite with minimum eigenvalue bounded below by some positive constant and $\|\bs{\S}\|_F^2\le sd$, the proposed estimator achieves the rate $\s^2\psi^*(s,\bs{\S})$ for the expected squared risk also in the sparse regime. 
		
			The bound~\eqref{eq:unknownvar1-sparse1} is valid only under the condition that $\eta$ is bigger than $c_2 \|\bs{\S}\|_F / d$, which is not negligible when $\|\bs{\S}\|_F\asymp d$. We now show that one can construct an estimator dependent of $\eta$ that is free of this constraint. Namely, for any given $\eta\in (0, 1)$, we propose an estimator depending on $\eta$ that  achieves~\eqref{eq:unknownvar1-sparse1}. Fix $\eta \in (0, 1)$ and let $q_\eta$ be the $\eta$-th quantile of a chi-square random variable with one degree of freedom. Define
			\[
			\hat{\s}_\eta^2 := \frac{\hat{M}}{q_{1 - \eta / 20}}, \quad\textrm{where}\quad \hat{M} := \min \{t: \hat{F}_Y^2(t) \ge 0.5\}.
			\]
			We now modify $\hat{Q}^*$ in the sparse regime $s \le \|\bs{\S}\|_F$ as follows:
			\[
			\hat{Q}^*_\eta := \sum_{i=1}^d Y_i^2 \one\{|Y_i| > \max\{\hat{\s}_S, \hat{\s}_\eta\} \s_{i} \tau_\eta \} - \hat{\s}_S^2 \alpha_{s, \eta} \sum_{i=1}^d \s_{i}^2,
			\]
			where 
			\[
			\tau_\eta = 3 \sqrt{\frac{q_{1 - \eta / 20}}{q_{\eta / 20}}\log(1 + \|\bs{\S}\|_F^2 / s^2)} \quad\textrm{and}\quad \alpha_{s, \eta} = {\mathbf E}[Z^2 \one\{|Z| \ge \tau_\eta\}].
			\]
			The corresponding estimator of the $\ell_2$-norm $\hat{N}^*_\eta := \sqrt{|\hat{Q}^*_\eta|}$ achieves the following bound in probability in the sparse regime. 
			\begin{theorem}\label{thm:unknownvar2}
				There exists a constant $\alpha \in (0, 1)$ such that for any $\eta \in (0, 1)$, any $\bs{\t} \in B_0(s, d)$ with $s$ satisfying that $s \le \min\{\alpha d, \|\bs{\S}\|_F\}$, and any  $\bs{\S}$ satisfying Assumptions~\ref{as:var0} and~\ref{as:var1} we have 
				\[
				\prob\left((\hat{N}^*_\eta - \|\bs{\t}\|_2)^2 \le c_\eta \s^2\psi^*(s, \bs{\S})\right) \ge 1 - \eta,
				\]
				where $c_\eta > 0$ is a constant depending only on $\eta$.
			\end{theorem}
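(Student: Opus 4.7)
My plan is to reduce the high-probability bound on $\hat{N}^*_\eta$ to one on $\hat{Q}^*_\eta$ via the elementary inequality $(\sqrt{|a|}-\sqrt{b})^2\le |a-b|$ (valid for $b\ge 0$), and then, for any $\eta\in(0,1)$, to construct a ``good event'' $\mathcal{E}$ with $\prob(\mathcal{E})\ge 1-\eta$ on which the random threshold $T_i := \max\{\hat{\s}_S,\hat{\s}_\eta\}\s_{i}\tau_\eta$ is sandwiched between two deterministic multiples of $\s\s_{i}\tau$, where $\tau=3\sqrt{\log(1+\|\bs{\S}\|_F^2/s^2)}$ is the threshold used in the known-$\sigma$ estimator $\hat{Q}$. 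Once the sandwich holds, $|\hat{Q}^*_\eta-\|\bs{\t}\|_2^2|$ can be analyzed essentially as in the proof of Theorem~\ref{thm:knownvar}, up to $\eta$-dependent multiplicative constants that are absorbed into $c_\eta$.

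The lower side of the sandwich rests on the algebraic identity
\[
\hat{\s}_\eta\s_{i}\tau_\eta = \s_{i}\tau\sqrt{\hat{M}/q_{\eta/20}},
\]
which follows directly from the definitions of $\hat{\s}_\eta$ and $\tau_\eta$ and reduces $\hat{\s}_\eta\s_{i}\tau_\eta\ge\s\s_{i}\tau$ to the single event $\{\hat{M}\ge q_{\eta/20}\s^2\}$. Since $\{\hat{M}<q_{\eta/20}\s^2\}\subset\{\hat{F}_{Y^2}(q_{\eta/20}\s^2)\ge 1/2\}$ up to dyadic discretization, I bound the probability of the bad event via Markov applied to the count $\sum_{i=1}^d\one\{\tilde{Y}_i^2\le q_{\eta/20}\s^2\}$, whose expectation is at most $d\cdot \eta/20 + s$ by~\eqref{eq:FY-Feps}. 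The upper side of the sandwich follows from a two-sided Markov bound on $\hat{\s}_S^2/\s^2$ obtained by rescaling Theorem~\ref{thm:hat-sigma-sparse}, combined with a matching upper concentration of $\hat{M}$. A union bound then delivers $\prob(\mathcal{E})\ge 1-\eta$, at the cost of inflating $c_\eta$ by a $1/\eta$ factor.

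On $\mathcal{E}$ I decompose $\hat{Q}^*_\eta-\|\bs{\t}\|_2^2$ into a signal sum over $i\in\cS$ and a noise sum over $i\notin\cS$ minus the centering $\hat{\s}_S^2\alpha_{s,\eta}\sum_{i}\s_{i}^2$. The noise sum is treated by using the sandwich to bracket $\one\{|Y_i|>T_i\}$ between $\one\{|\varepsilon_i|\ge C_\eta\s_{i}\tau\}$ and $\one\{|\varepsilon_i|\ge \s_{i}\tau\}$; the constant $\alpha_{s,\eta}$ is designed to match the expected value of $\varepsilon_i^2\one\{|\varepsilon_i|\ge\s_{i}\tau_\eta\}$, and the deviation of the thresholded sum from its expectation is controlled by a Chebyshev bound whose variance is dominated by $\|\bs{\S}\|_F^2$, mirroring the argument behind $\s^4\psi^2(s,\bs{\S})$ in Theorem~\ref{thm:knownvar}. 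For $i\in\cS$, I expand $Y_i^2=\t_i^2+2\t_i\s\varepsilon_i+\s^2\varepsilon_i^2$; the truncation bias from coordinates with small $|\t_i|$ is at most $s\s^2\tau_\eta^2$, which matches $\s^2\psi^*(s,\bs{\S})$ up to an $\eta$-dependent constant, and the Gaussian cross term is handled via $\lambda_{\max}(\bs{\S}_{\cS,\cS})\le 1$.

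The principal obstacle is the dependence between $T_i$, built from the same $\bs{Y}$ being thresholded, and the correlations among the $\varepsilon_i$'s. I handle this by freezing $T_i$ to the sandwich interval on $\mathcal{E}$ and invoking the monotonicity of $t\mapsto\sum_i Y_i^2\one\{|Y_i|>t\}$ to reduce the analysis to two deterministic endpoints, each amenable to second-moment inequalities controlled by $\|\bs{\S}\|_F^2$---the same no-sample-splitting technique as in the proof of Theorem~\ref{thm:hat-sigma-dense}. A secondary technical point is that enforcing $\prob(\mathcal{E})\ge 1-\eta$ for \emph{every} $\eta\in(0,1)$ forces the sandwich constants to depend on $\eta$ via $q_{\eta/20}$, $q_{1-\eta/20}$, and $1/\eta$, all of which are absorbed into $c_\eta$.
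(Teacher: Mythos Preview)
Your sandwich for the random threshold and your treatment of the noise sum over $i\notin\cS$ are essentially what the paper does. The gap is in the signal part, and it stems from reducing to $|\hat{Q}^*_\eta-\|\bs{\t}\|_2^2|$ via $(\sqrt{|a|}-\sqrt{b})^2\le|a-b|$. On the good event, expanding $Y_i^2$ for $i\in\cS$ produces the cross term $2\s\sum_{i\in\cS}\t_i\varepsilon_i$, whose standard deviation is $2\s(\bs{\t}_\cS^\top\bs{\S}_{\cS,\cS}\bs{\t}_\cS)^{1/2}$. Your claim $\lambda_{\max}(\bs{\S}_{\cS,\cS})\le 1$ is false: Assumption~\ref{as:var0} bounds only the diagonal entries, and $\lambda_{\max}(\bs{\S}_{\cS,\cS})$ can be as large as $s$ (take $\bs{\S}_{\cS,\cS}=\bs{1}\bs{1}^\top$). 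But even granting $\lambda_{\max}\le 1$, the cross term is of order $\s\|\bs{\t}\|_2$ with constant probability, so $|\hat{Q}^*_\eta-\|\bs{\t}\|_2^2|$ is not bounded by $c_\eta\s^2\psi^*(s,\bs{\S})$ uniformly over $\bs{\t}\in B_0(s,d)$; your high-probability bound on $(\hat{N}^*_\eta-\|\bs{\t}\|_2)^2$ therefore cannot follow from it.

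The paper avoids the cross term altogether by taking the square root \emph{before} merging signal and noise. Writing $A=\sum_{i\in\cS}Y_i^2\one(|Y_i|>T_i)\ge 0$ and letting $B$ be the remainder (noise sum minus centering), one has $|\sqrt{|A+B|}-\|\bs{\t}\|_2|\le|\sqrt{A}-\|\bs{\t}\|_2|+\sqrt{|B|}$. For the signal term the $\ell_2$ triangle inequality gives
\[
\big|\sqrt{A}-\|\bs{\t}\|_2\big|
\le\Big(\sum_{i\in\cS}\big(Y_i\one(|Y_i|>T_i)-\t_i\big)^2\Big)^{1/2}
=\Big(\sum_{i\in\cS}\big(\s\varepsilon_i-Y_i\one(|Y_i|\le T_i)\big)^2\Big)^{1/2},
\]
whose expected square is at most $2\sum_{i\in\cS}(\s^2\s_i^2+\esp[T_i^2])\lesssim c_\eta s\s^2\tau^2=c_\eta\s^2\psi^*(s,\bs{\S})$, independently of $\|\bs{\t}\|_2$. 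Your noise analysis then handles $\sqrt{|B|}$. Replacing your first step by this decomposition fixes the proof.
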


		Next, we consider the setting where the Frobenius norm $\|\bs{\S}\|_F$ is not available and we only know an upper bound $\rho$ on $\|\bs{\S}\|_F$. {\rev In this setting, we show that estimators that are based on slight modifications of $\hat{N}^\star$ and $\hat{N}^\star_\eta$ can achieve minimax optimality on the parameter spaces~$\T(s,d,\rho)$, $\rho>0$, where the restriction  $\|\bs{\S}\|_F\le \rho$ is active.}
		Let $\tilde{N}^*$ and $\tilde{N}^*_\eta$ be the estimators defined in the same way as $\hat{N}^*$ and $\hat{N}^*_\eta$, respectively, with the only difference that $\|\bs{\S}\|_F$ is replaced by its upper bound~$\rho$. Let $\phi^*(s, \cdot)$ be defined in \eqref{rate-comminges-unknown sigma}. Then $\phi^*(s, \rho^2)$ follows the same definition as $\psi^*(s, \bs{\S})$ except that we replace $\|\bs{\S}\|_F$ by $\rho$.

		\begin{theorem}\label{thm:unknownlb}
				Let $w:[0,\infty)\to[0,\infty)$ be a monotone non-decreasing function such that $w(0)=0$ and $w\not\equiv 0$. Assume that there exists a constant $\alpha \in (0, 1)$ such that $s \le \alpha d$. Then there exists a constant $c>0$ depending only on $w(\cdot)$ such that
				\[
				\inf_{\hat{T}}\sup_{(\bs{\t}, \bs{\S}) \in {\T}(s, d, \rho)} \sup_{\s > 0} \esp\, w\big(\s^{-1}( \phi^*(s, \rho^2))^{-1/2}|\hat{T} - \|\bs{\t}\|_2| \big)  \ge c.
				\]
			Moreover,
			\begin{enumerate}
				\item [(i)] In the regime $s > \rho$,
				\[
				\sup_{(\bs{\t}, \bs{\S}) \in \T(s, d, \rho)} \sup_{\s > 0} \frac{\esp[(\tilde{N}^* - \|\bs{\t}\|_2)^2]}{\s^2} \lesssim \phi^*(s, \rho^2).
				\]
				\item [(ii)] In the regime $s \le \rho$, for any $\eta \in (0,1)$ there exists a constant $c_\eta>0$ depending only on $\eta$ such that
				\[
				\sup_{(\bs{\t}, \bs{\S}) \in \T(s, d, \rho)} \sup_{\s > 0} \prob\left(({\tilde{N}^*_\eta} - \|\bs{\t}\|_2)^2 \ge c_\eta \s^2 \phi^*(s, \rho^2)\right) \le \eta.
				\]
			\end{enumerate}
		\end{theorem}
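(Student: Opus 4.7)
Parts (i) and (ii) will follow directly from Theorem~\ref{thm:unknownvar1}(i) and Theorem~\ref{thm:unknownvar2}, respectively, by substituting $\rho$ for $\|\bs{\S}\|_F$ throughout the definitions of $\tilde{N}^*$, $\tilde{N}^*_\eta$ and the associated risk analysis. Since $\|\bs{\S}\|_F \le \rho$ on $\T(s,d,\rho)$, every step in those proofs that bounds a quantity in terms of $\|\bs{\S}\|_F$ survives the substitution, and the monotonicity $\psi^*(s,\bs{\S}) \le \phi^*(s,\rho^2)$ converts the $\bs{\S}$-dependent rates into the advertised $\rho$-dependent ones. No new ideas are required for these parts.

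\textbf{Lower bound, reduction and sparse regime.} Since $w$ is non-decreasing with $w(0)=0$ and $w\not\equiv 0$, there exist $u_0, c_0 > 0$ with $w(u) \ge c_0\,\one\{u \ge u_0\}$, and the statement reduces to exhibiting two data-generating mechanisms in $\T(s,d,\rho)\times(0,\infty)$ whose $\|\bt\|_2$ values differ by at least $u_0\sigma\sqrt{\phi^*(s,\rho^2)}$ and which cannot be distinguished with probability bounded away from $1$. In the sparse regime $s\le\rho$, where $\phi^* = \phi$, I would invoke Theorem~\ref{thm:lower-known-sigma-rho} directly, since any estimator in the unknown-$\sigma$ model also produces a valid estimator in the easier known-$\sigma$ model.

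\textbf{Dense regime $s>\rho$.} Here the moment-matching trick of~\cite{comminges2021adaptive} is needed, since the separation between $\phi^*$ and $\phi$ in the dense regime is exactly the log factor it produces. I would fix the equicorrelation covariance $\bs{\S}^* = (1-a)\bs{I} + a\bs{1}\bs{1}^\top$ with $a = \sqrt{(\rho^2-d)/(d(d-1))}$, so that $\bs{\S}^*_{ii}=1$ and $\|\bs{\S}^*\|_F=\rho$ exactly, and test $\prob_0 = \mathcal{N}(\bs{0},\sigma_0^2\bs{\S}^*)$ against the mixture $\prob_1 = \int \mathcal{N}(\bt,\sigma_1^2\bs{\S}^*)\,d\pi(\bt)$, where $\pi$ is a prior on $B_0(s,d)$ with $\|\bt\|_2 \asymp \sigma_1\sqrt{\phi^*(s,\rho^2)}$, and with the noise levels coupled by the moment-matching identity $\sigma_0^2 - \sigma_1^2 = \mathbb{E}_\pi[\theta_i^2]$. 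This absorbs the per-coordinate signal variance into the noise variance, aligning first and second moments of $\bs{Y}$ between null and alternative; the resulting $\chi^2(\prob_1\|\prob_0)$ would then be bounded by $O(1)$ via a Hermite polynomial expansion in the whitened coordinates $\bs{\S}^{*-1/2}\bs{Y}$, after which Pinsker's inequality closes the Le Cam argument and produces the required $\log(s/\rho)$ factor.

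\textbf{Main obstacle.} The hardest step will be the Hermite expansion after whitening: the map $\bs{\S}^{*-1/2}$ destroys the sparsity of $\bt$, so the prior $\pi$ must be engineered with enough symmetry that $\bs{\S}^{*-1/2}\bt$ retains a tractable tensor structure permitting the $\chi^2=O(1)$ bound. The sub-regime $\rho^2\gg d$ is the most delicate, because the ``effective dimension'' $\rho^2$ exceeds the ambient $d$ and the required hardness must be extracted from the correlation structure of $\bs{\S}^*$ rather than from the dimension itself; a single equicorrelation $\bs{\S}^*$ may not suffice, and a block-structured construction combining an i.i.d.\ block with tightly correlated padding may be needed to make the Hermite calculation come out right.
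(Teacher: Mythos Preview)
Your treatment of the upper bounds and of the sparse regime $s\le\rho$ in the lower bound is correct and matches the paper exactly: parts (i) and (ii) are obtained by rerunning the proofs of Theorems~\ref{thm:unknownvar1} and~\ref{thm:unknownvar2} with $\rho$ in place of $\|\bs{\S}\|_F$, and for $s\le\rho$ the lower bound is inherited from the known-$\sigma$ result (Theorem~\ref{thm:lower-known-sigma-rho}) since $\phi^*=\phi$ there.

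For the dense regime $s>\rho$, however, your equicorrelation-plus-whitening plan is not what the paper does, and the obstacle you yourself flag is real and likely fatal to that route: once you apply $\bs{\S}^{*-1/2}$, the prior on $\bt$ is no longer sparse in the whitened coordinates, and the Hermite/$\chi^2$ calculation from~\cite{comminges2021adaptive} genuinely relies on the i.i.d.\ product structure that sparsity in the canonical basis provides. Engineering a symmetric prior that survives whitening by a non-diagonal matrix is not a small technicality.

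The paper instead sidesteps the whitening issue entirely by a \emph{reduction} to the i.i.d.\ model. It takes a block-diagonal covariance $\bs{\S}_0^*$ whose blocks are the rank-one all-ones matrices $\bs{1}_{r\times r}$ (not equicorrelation with $a<1$, but the fully degenerate case $a=1$). Within each block the $r$ coordinates of $\bs{\varepsilon}$ are almost surely equal, so the $d$-dimensional observation collapses to $p=\lfloor d/r\rfloor$ genuinely independent $\mathcal{N}(\tilde\t_j,\s^2)$ observations (plus one nuisance observation from the leftover block). Restricting $\bt$ to be constant on blocks with at most $k=\lfloor s/r\rfloor$ nonzero blocks, one has $\|\bt\|_2=\sqrt{r}\,\|\tilde{\bt}\|_2$, and the constraints $rk\le s$, $r^2(p+1)\le\rho^2$ are met by choosing $r=\lfloor\rho^2/(2d)\rfloor$. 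The minimax risk on $\T(s,d,\rho)$ is then bounded below by the i.i.d.\ minimax risk on $B_0(k,p)$ with unknown $\s$, and one checks $r\,\phi^*(k,p)\gtrsim\phi^*(s,\rho^2)$. At this point the moment-matching lower bound of~\cite[Proposition~3(ii)]{comminges2021adaptive} applies essentially verbatim (the only wrinkle is the extra nuisance observation, handled by a one-line total-variation bound). Your closing remark about ``a block-structured construction'' was pointing in the right direction; the specific idea you were missing is to make the blocks \emph{fully} degenerate so that the correlated model becomes literally, not approximately, an i.i.d.\ model of the right effective dimension.
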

		
		\section{Optimal testing for signal detection}
		\label{sec:tests}
		
		In this section, we provide some corollaries of the above results for the problem of testing for signal detection. We now introduce the notation $\mathcal{M}_\rho:=\{\bs{\S}: \bs{\S}_{ii}=1, i\in[d], \text{ and } \|\bs{\S}\|_F \le \rho \}$, where $\rho>0$; then problem of testing hypotheses can be defined as follows:
		\begin{eqnarray*}
			H_0: \bs{\t}=0, \bs{\S}\in \mathcal{M}_\rho \quad \text{against} \quad H_1: \, (\bs{\t}, \bs{\S})\in \Theta(s,d, \rho) : \|\bt\|_2 \ge r,
		\end{eqnarray*}
		where the parameter $r>0$ characterizes a separation between the two hypotheses. When $r$ is too small there is no hope to construct a good test for this problem. The main question is to find the smallest, in a minimax sense, separation radius $r$, under which one can obtain a valid test and to provide such a test. 
		
		For any test $\Psi$, that is, any measurable function of $\bY$ with values in $\{0,1\}$, we define its risk as the sum of
		type I and type II error probabilities:
		$$
		{\mathcal R}(\Psi,s,d,\rho,r)= \sup_{\bs{\S}\in \mathcal{M}_\rho} \mathbf{P}_{0,\bs{\S}}(\Psi=1)+ 
		\sup_{(\bs{\t}, \bs{\S})\in \Theta(s,d,\rho) : \|\bt\|_2 \ge r}\mathbf{P}_{\bs{\t},\bs{\S}}(\Psi=0).
		$$
		The minimax risk is the smallest risk among all tests $\Psi$, that is, 
		$$
		{\mathcal R}^*(s,d,\rho,r) = \inf_{\Psi} {\mathcal R}(\Psi,s,d,\rho,r).
		$$
		Assume that we are given a tolerance level $\eta\in (0,1)$ for the minimax risk. 
		The {\it minimax rate of testing} $r^*_{\eta}(s,d,\rho)$ is defined as
		$$
		r^*_{\eta}(s,d,\rho) = \inf\left\{r>0: \,  {\mathcal R}^*(s,d,\rho,r)\le \eta  \right\}.
		$$
		A natural approach to testing $H_0$ against $H_1$ is to reject $H_0$ when some estimator of the $\ell_2$-norm $\|\bt\|_2$ exceeds a suitably chosen threshold. We use this approach with the estimators of the $\ell_2$-norm proposed above and the thresholds derived from the optimal rates of estimation that we have obtained. Thus, consider the test
		$$
		\Psi^*= \one\big(\hat N > \gamma\s \sqrt{\phi(s,\rho^2)}\big),
		$$
		where $\gamma>0$ is a constant. Introduce the separation radius
		$$\bar r_{\gamma} = 2\gamma \s \sqrt{\phi(s,\rho^2)}.$$
		Using the upper bound \eqref{eq1:thm:knownlb} and Chebyshev's inequality we can control the risk of $\Psi^*$ for the separation radius $\bar r_{\gamma}$  as follows:
		\begin{align}
			{\mathcal R}(\Psi^*,s,d,\rho,\bar r_{\gamma}) &\le  \sup_{\bs{\S}\in \mathcal{M}_\rho }\mathbf{P}_{0,\bs{\S}}(\hat N > \gamma \s\sqrt{\phi(s,\rho^2)})
			\\
			&\qquad +
			\sup_{(\bs{\t}, \bs{\S})\in \Theta(s,d,\rho)}\mathbf{P}_{\bs{\t},\bs{\S}}(\hat N - \|\bt\|_2 \le - \gamma \s\sqrt{\phi(s,\rho^2)})
			\\
			&\le
			2 \sup_{(\bs{\t}, \bs{\S})\in \Theta(s,d,\rho)} \frac{\esp[(\hat N - \|\bt\|_2)^2]}{\gamma^2 \s^2\phi(s,\rho^2)} \le C^* \gamma^{-2},
		\end{align}
		where $C^*>0$ is an absolute constant. Choosing $\gamma=\sqrt{\eta/C^*}$ implies that the last expression does not exceed $\eta$, so that 
		\begin{equation}\label{eq:ub-test}
			r^*_{\eta}(s,d,\rho)\le 2\sqrt{\eta/C^*}\s\sqrt{\phi(s,\rho^2)}.  
		\end{equation}
		On the other hand, 
		a matching lower bound on $r^*_{\eta}(s,d,\rho)$ follows from the next proposition.
		\begin{proposition}\label{prop: lower-testing}
			There exists an absolute constant $c_4 > 0 $ such that given any $\gamma > 0$, we have
			\[
			{\mathcal R}^*(s,d,\rho,\bar r_{\gamma}) \ge \exp(1 - \exp(c_4 \gamma^2)) / 2.
			\]
		\end{proposition}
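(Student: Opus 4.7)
The plan is to follow the classical Bayesian reduction, then convert an information-theoretic divergence into the claimed bound via the Bretagnolle-Huber inequality. The prior construction reuses the one behind the estimation lower bound of Theorem~\ref{thm:lower-known-sigma-rho}.

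First, I would fix an auxiliary covariance matrix $\bs{\S}_0\in\mathcal{M}_\rho$ and a prior $\mu$ supported on pairs $(\bs{\t},\bs{\S})\in\T(s,d,\rho)$ with $\|\bs{\t}\|_2\ge \bar r_\gamma$. Replacing each supremum in ${\mathcal R}(\Psi,s,d,\rho,\bar r_\gamma)$ by the corresponding Bayesian average and invoking the Neyman-Pearson lemma yields
\[
{\mathcal R}^*(s,d,\rho,\bar r_\gamma) \ge 1-d_{TV}\!\left(\mathbf{P}_{0,\bs{\S}_0},\bar P_\mu\right),\qquad \bar P_\mu := \int \mathbf{P}_{\bs{\t},\bs{\S}}\,d\mu(\bs{\t},\bs{\S}).
\]
Next, I would apply two elementary information-theoretic inequalities: the Bretagnolle-Huber bound $1-d_{TV}(P,Q)\ge\tfrac12\exp(-D_{KL}(Q\Vert P))$, and the comparison $D_{KL}(Q\Vert P)\le \chi^2(Q\Vert P)$ (immediate from $x\log x\le x^2-x$ for $x\ge 0$). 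Together they reduce the proposition to producing an absolute constant $c_4>0$ such that
\[
\chi^2(\bar P_\mu\Vert \mathbf{P}_{0,\bs{\S}_0}) \le \exp(4c_4\gamma^2)-1,
\]
since then $\tfrac12 \exp(-(\exp(4c_4\gamma^2)-1))=\tfrac12\exp(1-\exp(4c_4\gamma^2))$.

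The main work lies in this chi-squared bound. For priors $\mu$ that fix $\bs{\S}=\bs{\S}_0$, the Gaussian-density computation gives $\chi^2(\bar P_\mu \Vert \mathbf{P}_{0,\bs{\S}_0}) + 1 = {\mathbf E}_{(\bs{\t}, \bs{\t}')}\!\!\left[\exp(\langle \bs{\t}, \bs{\S}_0^{-1}\bs{\t}'\rangle/\sigma^2)\right]$, where $\bs{\t}, \bs{\t}'$ are independent draws from the $\bs{\t}$-marginal of $\mu$. Taking $\bs{\t}$ to be a uniformly random $s$-sparse vector with iid Rademacher signs on its support and common magnitude $a=\bar r_\gamma/\sqrt{s}$, the expectation factorizes over coordinates and reduces to a product of terms of the form $1+(s/d)^2(\cosh(u)-1)$ with $u\propto\gamma^2$. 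A case analysis then splits the sparse regime $s\le\rho$ (where $u=4\gamma^2\log(1+\rho^2/s^2)$) from the dense regime $s>\rho$ (where $u=4\gamma^2\rho/s$ is bounded), in each case yielding the required estimate. The chief obstacle is the joint choice of $\bs{\S}_0$ and the prior so that the Frobenius constraint $\|\bs{\S}_0\|_F\le \rho$ is saturated properly: $\bs{\S}_0=\bs{I}$ suffices in the sparse regime and recovers the computation of~\cite{CCT2017}, but in the dense regime one must introduce correlations in $\bs{\S}_0$ (for example a block-diagonal structure with blocks of size $\rho^2/d$) so that the effective dimension contracts to $\rho^2$ and the chi-squared picks up the correct dependence on $\|\bs{\S}\|_F$ rather than $d$.
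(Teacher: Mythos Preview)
Your approach is essentially the paper's, with the same two ingredients: (i) Bretagnolle--Huber combined with $D_{KL}\le\chi^2$ to obtain the explicit form $\tfrac12\exp(1-\exp(4c_4\gamma^2))$ valid for all $\gamma>0$, and (ii) a block-diagonal $\bs{\S}_0$ in the regime where $\rho$ is large relative to $\sqrt{d}$, so that the Frobenius constraint is saturated. The only execution difference is the order of operations: the paper first uses the block construction (with $\bs{\t}$ constant on each block of size $r\asymp\rho^2/d$) to reduce the $d$-dimensional correlated problem to an i.i.d.\ testing problem in dimension $p\asymp d^2/\rho^2$, and only then invokes the chi-squared bound of~\cite{CCT2017}; you propose to run the chi-squared computation directly in the $d$-dimensional model with $\bs{\S}_0$. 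The paper's order is cleaner for a concrete reason: the all-ones blocks make $\bs{\S}_0$ singular, so your identity $\chi^2+1={\mathbf E}\exp(\langle\bs{\t},\bs{\S}_0^{-1}\bs{\t}'\rangle/\sigma^2)$ is not literally available, and ``factorizes over coordinates'' also fails for a non-diagonal $\bs{\S}_0$. The dimension reduction to the $p$-dimensional i.i.d.\ model is precisely what restores both invertibility and the coordinatewise factorization you rely on.
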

		\noindent
		For any  $\eta\in (0,1/2)$,  the equation $\exp(1 - \exp(c_4 \gamma^2)) / 2 =\eta$ has the solution $$\gamma_\eta=\sqrt{(c_4)^{-1}\log\log\big(\frac{e}{2\eta}\big)}.$$ 
		Thus, for $\eta\in (0,1/2)$, we get ${\mathcal R}^*(s,d,\rho,\gamma_\eta \s \sqrt{\phi(s,\rho^2)})\ge \eta$, 
			which implies the bound 
			\begin{equation}\label{eq:lb-test}
				r^*_{\eta}(s,d,\rho)\ge \gamma_\eta\s\sqrt{\phi(s,\rho^2)}, \quad \forall  \eta\in (0,1/2). 
			\end{equation}
			Combining \eqref{eq:ub-test} and \eqref{eq:lb-test} we obtain the following corollary.
			\begin{corollary}\label{corollary:test}
				For any  $\eta\in (0,1/2)$, there exist constants $c_\eta, C_\eta$ depending only on $\eta$ such that the minimax rate of testing satisfies
				$$
				c_\eta \s\sqrt{\phi(s,\rho^2)} \le r^*_{\eta}(s,d,\rho) \le C_\eta \s\sqrt{\phi(s,\rho^2)}.
				$$
			\end{corollary}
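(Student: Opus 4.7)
The plan is to combine a matching upper and lower bound, both of order $\s\sqrt{\phi(s,\rho^2)}$ with constants depending only on $\eta$. Both ingredients have been prepared earlier in the section: Theorem~\ref{thm:knownlb} controls the mean square error of an $\ell_2$-norm estimator, which feeds a plug-in test for the upper bound, while Proposition~\ref{prop: lower-testing} supplies the lower bound. Given these two results, the corollary is really a calibration exercise in $\gamma$.

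For the upper bound, I would take the plug-in test $\Psi^* = \one(\tilde N > \gamma\s\sqrt{\phi(s,\rho^2)})$, where $\tilde N$ is the minimax-optimal estimator of $\|\bt\|_2$ from Theorem~\ref{thm:knownlb} (built with the known upper bound $\rho$ on $\|\bs{\S}\|_F$). At separation $\bar r_\gamma = 2\gamma\s\sqrt{\phi(s,\rho^2)}$, both the type~I error event $\{\tilde N > \gamma\s\sqrt{\phi(s,\rho^2)}\}$ under $\bt=0$ and the type~II error event $\{\tilde N \le \gamma\s\sqrt{\phi(s,\rho^2)}\}$ under $\|\bt\|_2\ge \bar r_\gamma$ are contained in $\{|\tilde N - \|\bt\|_2| \ge \gamma\s\sqrt{\phi(s,\rho^2)}\}$. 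Chebyshev's inequality combined with \eqref{eq1:thm:knownlb} then gives ${\mathcal R}(\Psi^*,s,d,\rho,\bar r_\gamma) \le C^*/\gamma^2$ for an absolute constant $C^*>0$. Choosing $\gamma$ of order $1/\sqrt{\eta}$ forces the right-hand side below $\eta$, yielding $r^*_\eta(s,d,\rho)\le C_\eta\,\s\sqrt{\phi(s,\rho^2)}$.

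For the lower bound, I would invoke Proposition~\ref{prop: lower-testing} directly: ${\mathcal R}^*(s,d,\rho,\bar r_\gamma) \ge \exp(1-\exp(4c_4\gamma^2))/2$. Solving $\exp(1-\exp(4c_4\gamma^2))/2 = \eta$ for $\gamma$ produces the critical level $\gamma_\eta = \sqrt{(4c_4)^{-1}\log\log(e/(2\eta))}$, which is a positive constant whenever $\eta\in(0,1/2)$. Hence ${\mathcal R}^*(s,d,\rho,\gamma_\eta\s\sqrt{\phi(s,\rho^2)})\ge\eta$, from which $r^*_\eta(s,d,\rho)\ge c_\eta\,\s\sqrt{\phi(s,\rho^2)}$ with $c_\eta := \gamma_\eta$.

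The two bounds together finish the proof. The substantive content lies elsewhere: the upper bound is powered by Theorem~\ref{thm:knownlb}, whose nontrivial part is the concentration analysis of the thresholded quadratic estimator under correlated noise, and the lower bound rests on Proposition~\ref{prop: lower-testing}, presumably proved by a standard Bayesian reduction (Le Cam/Ingster two-point or mixture argument) bounding the chi-square divergence between the null and a carefully chosen prior on the $s$-sparse alternatives; once both are in hand, the corollary is immediate and the only ``obstacle'' is the elementary inversion of the risk bound in~$\gamma$.
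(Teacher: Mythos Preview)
Your proposal is correct and follows essentially the same argument as the paper: the upper bound via the plug-in test, Chebyshev's inequality, and the mean-square bound from Theorem~\ref{thm:knownlb}; the lower bound by inverting the risk bound of Proposition~\ref{prop: lower-testing} in $\gamma$. The only cosmetic difference is that you write the test with $\tilde N$ whereas the paper writes it with $\hat N$, but both estimators satisfy the same bound \eqref{eq1:thm:knownlb}, so this is immaterial.
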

			%
			%
			
			{\rev 
				The results of this section deal with minimax optimality on a class of matrices, while the prior work on sparse testing discussed in the Introduction was analyzing minimax optimality for a fixed  matrix $\bs{\S}$. 
				In those papers,  $H_0$ was a simple hypothesis (with a given matrix $\bs{\S}$, which was assumed diagonal, except for \cite{hall2009jin,kotekal2021minimax,Gao2021change-point}), while here we deal with minimax optimality over classes of matrices $\bs{\S}$, and in our case $H_0$ is a composite hypothesis. Of course, for specific fixed matrices in the class one gets, in general, better rates than those achieved uniformly over a class. This can be seen in the setting with equicorrelation matrix detailed in \citet{kotekal2021minimax}, and in \citet{hall2009jin} for a different instance of sparse testing problem in asymptotics as $d\to\infty$.  
				
			}

			In \cite[Theorem 7]{Gao2021change-point} the authors provide the minimax rate for a testing problem in a change-point framework, which implies the minimax rate of the order $\sqrt{\|\bs{\S}\|_F}$ for testing in model \eqref{eq:model} under the no sparsity scenario $s=d$ with fixed positive definite matrix $\bs{\S}$ and $\s=1$.  Note that it agrees with our conclusions for this particular setting. Indeed, when $\bs{\S}$ is fixed with entries independent of $d$ we have $\|\bs{\S}\|_F\lesssim d$, so that for $s=d$ our Theorem \ref{thm:knownvar} gives the rate $\psi(s, \bs{\S})=\|\bs{\S}\|_F$ for the squared loss leading to the $\sqrt{\|\bs{\S}\|_F}$ rate of testing for this particular case.
			
		{\rev
			\begin{remark}
				An interesting phenomenon described in \citet{hall2009jin} and further noticed in other context by \citet{kotekal2021minimax}
				consists in the fact that, for some specific families of matrices $\bs{\S}$, the  minimax rate of testing can be better than in the case of independent noise, and the hardest subproblem corresponds to the identity covariance matrix.  
				In contrast, in our setting this is not the case. Indeed, inspection of the proof shows that our lower bound is obtained on correlated data that correspond to block-diagonal matrices and not to the identity covariance matrix. 
				The independent case appears in the proof only as a technical element -- after a suitable reduction we apply the lower bounds for the independent case from \citet{CCT2017}. 
				Our contribution is a reduction argument that leads to a contraction of  
				the original problem to testing two composite hypotheses on a smaller scale. Importantly, the reduction 
				step itself does not rely on the hypotheses being composite.  
				The intuition is that, with a stronger correlation on $\bs{\S}$, the effective sample size of the noise vector $\bs{\varepsilon}$ becomes smaller, with 
				the effective sample size being directly governed by the parameter $\rho$. 
				In our lower bound construction, this reduction is realized through a block-wise covariance structure, where the block size is also determined by $\rho$. 
				Consequently, the estimation error of the mean parameter within each block is amplified by the block size. 
				This construction provides a convenient and analytically tractable instance that captures the essential difficulty of the problem.   
			\end{remark}
		}
		
		\section{Proofs}\label{sec:proof}
		\subsection{Properties of correlated Gaussian variables}
		\label{sec:properties-gaussian}
		
		In this section, we collect some facts about correlated Gaussian variables that will be useful in the proofs. 
		We show that several functions of Gaussian random variables scale quadratically with the correlation coefficients, which will be crucial to obtain the optimal dependence of the rates of convergence on the Frobenius norm of the covariance matrix.

		\begin{lemma}\leavevmode
			\label{lmm:correlated-gaussian}
			There exists an absolute constant $C>0$ such that the following statements hold for two  standard Gaussian random variables $\zeta$ and $\eta$ that are jointly normally distributed with correlation coefficient $\nu$.
			\begin{enumerate}[label = (\roman*)]
				\item \label{lmm:square} $ \cov[\zeta^2, \eta^2] = 2\nu^2. $
				\item \label{lmm:truncate-moments} For any $\tau\ge 1$, let $\beta=
				{\mathbf E}[Z^2 \big| |Z|>\tau]$ where $Z\sim \calN(0,1)$. Then
				\[
				\cov[(\zeta^2-\beta)\one(|\zeta|>\tau), (\eta^2-\beta)\one(|\eta|>\tau)]
				\le C \nu^2 \tau^4 \exp(-\tau^2/2).
				\]
				\item \label{lmm:cdf} For any $\tau \ge 0$,
				\[
				| \cov[\one(\zeta^2 \le \tau), \one(\eta^2 \le \tau)] |
				\leq  C \nu^2 (\tau^2+1) e^{-\tau/3}.
				\]
				\item \label{lmm:cosine} For any $t, \mu_1,\mu_2 \in \RR$, $\s_1,\s_2>0$ we have
				\begin{align}
					& | \cov[\cos(t (\mu_1+\s_1\zeta)), \cos(t (\mu_2+\s_2\eta))] | \\
					&\qquad \le C \big( |\nu^2\cos(t \mu_1) \cos(t \mu_2)|  + |\nu\sin(t \mu_1) \sin(t \mu_2)|\big).
				\end{align}
				\[
				\]
			\end{enumerate}
		\end{lemma}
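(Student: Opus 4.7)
The unifying engine is the orthogonality $\mathbf{E}[H_k(\zeta)H_l(\eta)]=k!\,\nu^k\delta_{kl}$ of Hermite polynomials under a bivariate standard Gaussian, together with the closed-form joint characteristic function $\mathbf{E}[\cos(a_1\zeta+a_2\eta)]=\exp(-\tfrac12(a_1^2+2a_1a_2\nu+a_2^2))$. In each part, the $\nu^2$ or $\nu$ factor will arise because the relevant function of a single Gaussian is \emph{even and centered}, which annihilates its $H_0$ and $H_1$ projections and leaves only Hermite coefficients of order $\ge 2$ in the covariance expansion.

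Part \ref{lmm:square} is a direct computation: substituting $\eta=\nu\zeta+\sqrt{1-\nu^2}\,\xi$ with $\xi\sim\calN(0,1)$ independent of $\zeta$ gives $\mathbf{E}[\zeta^2\eta^2]=1+2\nu^2$. Parts \ref{lmm:truncate-moments} and \ref{lmm:cdf} are both of the form $\cov[f(\zeta),f(\eta)]$ with $f$ even and centered; expanding $f(z)=\sum_k a_k H_k(z)$ forces $a_0=a_1=0$, and $|\nu|^k\le\nu^2$ for $k\ge 2$ yields the clean bound
\[
|\cov[f(\zeta),f(\eta)]|=\Big|\sum_{k\ge 2}a_k^2\,k!\,\nu^k\Big|\le\nu^2\,\var[f(Z)].
\]
It then remains to estimate $\var[f(Z)]$. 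For \ref{lmm:truncate-moments}, with $f(z)=(z^2-\beta)\one(|z|>\tau)$, standard Mills-ratio bounds give $\mathbf{E}[Z^4\one(|Z|>\tau)]\lesssim\tau^3 e^{-\tau^2/2}$ and $\beta\asymp\tau^2$, so both $\mathbf{E}[Z^4\one(|Z|>\tau)]$ and $\beta^2\,\mathbf{P}(|Z|>\tau)$ are absorbed by $\tau^4 e^{-\tau^2/2}$. For \ref{lmm:cdf}, $f$ is a centered Bernoulli indicator whose variance is at most $\min(1,\mathbf{P}(|Z|>\sqrt\tau))\le\min(1,2e^{-\tau/2})$, which is dominated by $C(\tau^2+1)e^{-\tau/3}$ for all $\tau\ge 0$.

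For \ref{lmm:cosine} I would proceed analytically rather than through Hermite expansion. Splitting $\cos(t(\mu_i+\s_i\zeta_i))$ via the addition formula and using the parity $(\zeta,\eta)\mapsto(-\zeta,-\eta)$, which forces $\mathbf{E}[\cos(t\s_1\zeta)\sin(t\s_2\eta)]=0$, the covariance reduces to
\[
\cos(t\mu_1)\cos(t\mu_2)\,\cov[\cos(t\s_1\zeta),\cos(t\s_2\eta)]+\sin(t\mu_1)\sin(t\mu_2)\,\cov[\sin(t\s_1\zeta),\sin(t\s_2\eta)].
\]
Product-to-sum identities combined with the bivariate characteristic function identify the two inner covariances as $e^{-t^2(\s_1^2+\s_2^2)/2}(\cosh\lambda-1)$ and $e^{-t^2(\s_1^2+\s_2^2)/2}\sinh\lambda$ with $\lambda:=t^2\s_1\s_2\nu$. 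Expanding the hyperbolic functions in power series and applying $|\nu|^{2k}\le\nu^2$ and $|\nu|^{2k+1}\le|\nu|$ gives $|\cosh\lambda-1|\le\nu^2(\cosh(t^2\s_1\s_2)-1)$ and $|\sinh\lambda|\le|\nu|\sinh(t^2\s_1\s_2)$; the AM--GM cancellation $-\tfrac{t^2}{2}(\s_1^2+\s_2^2)+t^2\s_1\s_2=-\tfrac{t^2}{2}(\s_1-\s_2)^2\le 0$ then yields $e^{-t^2(\s_1^2+\s_2^2)/2}\cosh(t^2\s_1\s_2)\le 1$, and the required bound with an absolute constant follows. The main subtlety is precisely this last cancellation: $\cosh(t^2\s_1\s_2)$ grows exponentially in $t\s_1\s_2$, so one might naively expect a constant depending on $t,\s_1,\s_2$, and the AM--GM identity is exactly the reason the Gaussian prefactor absorbs this growth uniformly.
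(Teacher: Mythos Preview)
Your proof is correct and takes a genuinely different route from the paper for parts \ref{lmm:truncate-moments}--\ref{lmm:cosine}.

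For \ref{lmm:truncate-moments} and \ref{lmm:cdf}, the paper proceeds by direct analysis: part \ref{lmm:truncate-moments} is quoted from \cite{carpentier2019minimax}, while part \ref{lmm:cdf} is handled by splitting into the cases $|\nu|\ge 1/5$ (Cauchy--Schwarz) and $|\nu|<1/5$ (an explicit integral in terms of $\Phi$, followed by second-order Taylor bounds). Your Hermite-expansion argument is more conceptual and unified: once you observe that an even $f\in L^2(\gamma)$ has vanishing odd Hermite coefficients, the identity $\cov[f(\zeta),f(\eta)]=\sum_{k\ge 2,\,k\text{ even}}a_k^2\,k!\,\nu^k$ immediately gives $0\le \cov[f(\zeta),f(\eta)]\le \nu^2\var[f(Z)]$, reducing both parts to a one-variable variance estimate with no case analysis on $\nu$. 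The paper's approach is more elementary in the sense of not invoking the Mehler/Hermite machinery, but your route is shorter and makes the quadratic dependence on $\nu$ transparent.

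For \ref{lmm:cosine}, both proofs use the joint characteristic function, but the paper works directly with $\tilde\zeta=\mu_1+\s_1\zeta$, $\tilde\eta=\mu_2+\s_2\eta$ and then bounds $|e^{-b(1+\delta)}+e^{-b(1-\delta)}-2e^{-b}|$ and $|e^{-b(1+\delta)}-e^{-b(1-\delta)}|$ by a case split $|\nu|\gtrless 1/2$ combined with divided-difference (second-derivative) estimates. Your addition-formula decomposition followed by the power-series bounds $\cosh(\lambda)-1\le \nu^2(\cosh(t^2\s_1\s_2)-1)$ and $|\sinh(\lambda)|\le |\nu|\sinh(t^2\s_1\s_2)$, capped off by the AM--GM identity $e^{-t^2(\s_1^2+\s_2^2)/2}\cosh(t^2\s_1\s_2)=\tfrac12(e^{-t^2(\s_1-\s_2)^2/2}+e^{-t^2(\s_1+\s_2)^2/2})\le 1$, again avoids any case analysis on $\nu$ and yields the bound with explicit constant $C=1$.
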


		\begin{proof}
			Given $\zeta=u$, the conditional distribution of $\eta$ is $ \calN(\nu u,1-\nu^2)$. Therefore, $\eta$ has the same distribution as $\nu \zeta + \sqrt{1 - \nu^2} Z$ where $Z$ is a standard Gaussian random variable independent of $\zeta$.
			\begin{enumerate}[label = (\roman*)]
				\item Let $\eta = \nu \zeta + \sqrt{1 - \nu^2} Z$. We obtain
				\begin{align}
					& \cov[\zeta^2, \eta^2]  = {\mathbf E}[(\zeta^2 - 1) (\eta^2 - 1)] = {\mathbf E} [(\zeta^2 - 1) ((\nu \zeta + \sqrt{1 - \nu^2} Z)^2 - 1)] \\
					& = {\mathbf E}[(\zeta^2 - 1) (\nu^2 \zeta^2 + (1 - \nu^2) Z^2 + 2 \nu \sqrt{1 - \nu^2} \zeta Z - 1)] \\
					& = \nu^2 {\mathbf E}[(\zeta^2 - 1) \zeta^2] + 2 \nu \sqrt{1 - \nu^2} {\mathbf E}[(\zeta^2 - 1) \zeta Z ] + {\mathbf E}[(\zeta^2 - 1) ((1 - \nu^2)Z^2 - 1) ] \\
					& = 2\nu^2.
				\end{align}

				\item See~\cite[Lemma 5]{carpentier2019minimax}.

				\item We consider separately the cases $|\nu|\ge \frac{1}{5}$ and $|\nu|< \frac{1}{5}$. For $|\nu|\ge \frac{1}{5}$, using the Cauchy-Schwarz inequality we get
				\[
				| \cov[\one(\zeta^2 \le \tau), \one(\eta^2 \le \tau)] |
				\le \var[\one(\eta^2 \le \tau)]
				\le \mathbf{P}[\eta^2>\tau]
				\le C \nu^2 e^{-\tau/2},
				\]
				where $C>0$ is an absolute constant.
				
				Next we prove the result for $|\nu|< \frac{1}{5}$.
				Given $\zeta=u$, the conditional distribution of $\eta$ is $ \calN(\nu u,1-\nu^2)$.
				Then,
				\begin{align}
					&\cov[\one(\zeta^2 \le \tau), \one(\eta^2 \le \tau)]
					= \mathbf{P}[\zeta^2\le \tau, \eta^2\le \tau]-\mathbf{P}[\zeta^2\le \tau]\mathbf{P}[\eta^2\le \tau]\\
					=~&\int_{-\sqrt{\tau}}^{\sqrt{\tau}}\frac{1}{\sqrt{2\pi}}e^{-u^2/2}
					\left(
					\Phi\left(\frac{\sqrt{\tau}-\nu u}{\sqrt{1-\nu^2}}\right)
					-\Phi\left(\frac{-\sqrt{\tau}-\nu u}{\sqrt{1-\nu^2}}\right)
					-\Phi(\sqrt{\tau})+\Phi(-\sqrt{\tau})
					\right)du.\label{eq:cov-integral}
				\end{align}
				For the expression under the integral in \eqref{eq:cov-integral} we have
				\begin{align}
					&\left|
					\Phi\left(\frac{\sqrt{\tau}-\nu u}{\sqrt{1-\nu^2}}\right)
					-\Phi\left(\frac{-\sqrt{\tau}-\nu u}{\sqrt{1-\nu^2}}\right)
					-\Phi(\sqrt{\tau})+\Phi(-\sqrt{\tau})
					\right|\\
					= &\left|
					\Phi\left(\frac{\sqrt{\tau}-\nu u}{\sqrt{1-\nu^2}}\right) + \left(1
					-\Phi\left(\frac{-\sqrt{\tau}-\nu u}{\sqrt{1-\nu^2}}\right)\right)
					-\Phi(\sqrt{\tau}) - \left(1 - \Phi(-\sqrt{\tau})\right)
					\right|\\
					=~& \left|\Phi\left(\frac{\sqrt{\tau}+\nu u}{\sqrt{1-\nu^2}}\right)
					+\Phi\left(\frac{\sqrt{\tau}-\nu u}{\sqrt{1-\nu^2}}\right)
					-2\Phi\left(\frac{\sqrt{\tau}}{\sqrt{1-\nu^2}}\right) + 2\Phi\left(\frac{\sqrt{\tau}}{\sqrt{1-\nu^2}}\right)
					-2\Phi\left(\sqrt{\tau}\right)\right|\\
					\le~& \max_{x \in I_1}|\Phi''(x)|\left(\frac{\nu u}{\sqrt{1-\nu^2}}\right)^2
					+2\max_{x \in I_2}|\Phi'(x)|\left(\frac{\sqrt{\tau}}{\sqrt{1-\nu^2}}-\sqrt{\tau}\right),\label{eq:integrand-two-parts}
				\end{align}
				where $I_1$ is the interval with endpoints $\frac{\sqrt{\tau}-\nu u}{\sqrt{1-\nu^2}}$ and $\frac{\sqrt{\tau}+\nu u}{\sqrt{1-\nu^2}}$, $I_2=[\sqrt{\tau}, \frac{\sqrt{\tau}}{\sqrt{1-\nu^2}}]$, and we have used the triangle inequality and the relation between divided differences and derivatives (see, e.g., \cite[(2.1.4.3)]{SB2002}).
				Since $|\nu|\le \frac{1}{5}$ and $|u|\le \sqrt{\tau}$, we have $I_1\subseteq [\sqrt{\frac{2\tau}{3}},\sqrt{\frac{3\tau}{2}}]$.
				Note that $\Phi'(x)=\frac{1}{\sqrt{2\pi}}e^{-x^2/2}$ and $\Phi''(x)=-\frac{x}{\sqrt{2\pi}}e^{-x^2/2}$, and $\frac{1}{\sqrt{1-\nu^2}}-1=\frac{\nu^2}{\sqrt{1-\nu^2}+1-\nu^2}$.
				Therefore, the upper bound \eqref{eq:integrand-two-parts} yields
				\begin{align}
					& \left|
					\Phi\left(\frac{\sqrt{\tau}-\nu u}{\sqrt{1-\nu^2}}\right)
					-\Phi\left(\frac{-\sqrt{\tau}-\nu u}{\sqrt{1-\nu^2}}\right)
					-\Phi(\sqrt{\tau})+\Phi(-\sqrt{\tau})
					\right| \le C' \nu^2 (\tau^{3/2}+\tau^{1/2})e^{-\tau/3},
				\end{align}
				where $C'>0$ is an absolute constant. The conclusion follows from \eqref{eq:cov-integral}.

				\item  Taking the real part of the characteristic function we obtain that, for $X\sim \calN(\mu,\sigma^2)$,  
				\begin{equation}
					\label{eq:expect-cos}
					{\mathbf E}[\cos(t X)] = \cos(t\mu)\exp\pth{-\frac{1}{2}t^2\sigma^2}.
				\end{equation}
				Set $\delta = \nu \frac{2\sigma_1\sigma_2}{\sigma_1^2+\sigma_2^2}$, $\tilde\zeta = \mu_1+\s_1\zeta$, and $\tilde\eta = \mu_2+\s_2\eta$.
				Then, $\tilde\zeta+\tilde\eta\sim \calN(\mu_1+\mu_2, (\sigma_1^2+\sigma_2^2)(1+\delta))$ and $\tilde\zeta-\tilde\eta\sim \calN(\mu_1-\mu_2, (\sigma_1^2+\sigma_2^2)(1-\delta))$.
				Let $b=\frac{t^2(\sigma_1^2+\sigma_2^2)}{2}$. We get
				\begin{align}
					& \phantom{{}={}}
					\cov[\cos(t \tilde\zeta), \cos(t \tilde\eta)]
					={\mathbf E}[\cos(t \tilde\zeta)\cos(t \tilde\eta)] - {\mathbf E}[\cos(t \tilde\zeta)] {\mathbf E}[\cos(t \tilde\eta)]\\
					& = \frac{1}{2}{\mathbf E}[\cos t(\tilde\zeta+\tilde\eta)] + \frac{1}{2}{\mathbf E}[\cos t(\tilde\zeta-\tilde\eta)]
					- {\mathbf E}[\cos(t \tilde\zeta)] {\mathbf E}[\cos(t \tilde\eta)] \\
					& = \frac{1}{2}\cos(t\mu_1)\cos(t\mu_2)\pth{e^{-b (1+\delta) } + e^{-b(1-\delta)} - 2e^{-b}   } \\
                    & \phantom{{}={}} - \frac{1}{2}\sin(t\mu_1)\sin(t\mu_2)\pth{ e^{-b (1+\delta) } - e^{-b(1-\delta)} }.
					\end{align}
					It remains to show that
					\begin{align}
						\abs{e^{-b (1+\delta) } + e^{-b(1-\delta)} - 2e^{-b}}
						\lesssim \nu^2,
						\quad
						\abs{e^{-b (1+\delta) } - e^{-b(1-\delta)}}
						\lesssim |\nu|.
					\end{align}
					For $|\nu|\ge \frac{1}{2}$ these upper bounds hold trivially.
					For $|\nu|\le \frac{1}{2}$, using the fact that $|\delta|\le |\nu|\le \frac{1}{2}$ and applying the relation between divided differences and derivatives (see, e.g., \cite[(2.1.4.3)]{SB2002} we obtain
					\begin{align}
						\abs{e^{-b (1+\delta) } + e^{-b(1-\delta)} - 2e^{-b}}
						& \le \max_{\xi\in[\frac{1}{2},\frac{3}{2}] }  b^2 \delta^2 e^{-b \xi }  
						\lesssim e^{-b/4}\delta^2
						\le \nu^2,\\
						\abs{e^{-b (1+\delta) } - e^{-b(1-\delta)}}
						& \le \max_{\xi\in[\frac{1}{2},\frac{3}{2}] } 2b |\delta| e^{-b \xi } 
						\lesssim e^{-b/4}|\delta|
						\le |\nu|.
					\end{align}
					The conclusion follows.
					\qedhere
				\end{enumerate}
			\end{proof}
			As a corollary, we obtain the following proposition on the properties of various statistics of the correlated Gaussian random vector $\bs{\varepsilon}=(\varepsilon_1,\dots,\varepsilon_d)\sim \calN(0, \bs{\S})$.
			Recall that we denote  by $\s_1^2,\dots,\s_d^2$ the diagonal elements of $\bs{\S}=({\bs{\S}}_{ij})_{i,j}$ and by
			$\tilde{\bs{\S}}$ be the matrix of correlation coefficients with entries $\tilde{\bs{\S}}_{ij}={\bs{\S}}_{ij}/(\s_i\s_j)$.
			
			\begin{proposition}\leavevmode
				\label{prop:correlated-gaussian}
				Let $\bs{\varepsilon}\sim \calN(0, \bs{\S})$. There exists an absolute constant $C>0$ such that the following holds. 
				\begin{enumerate}[label = (\roman*)]
					\item \label{prop:square} $\var( \| \bs{\varepsilon} \|_2^2 )=2\| \bs{\S} \|_F^2$. \label{prop:var-chi}
					\item \label{prop:truncate-moments} Let $\tau\ge 1$ and $\beta=\mathbf{E}[Z^2 \big| |Z|>\tau]$, where $Z\sim \calN(0,1)$. Then, for any subset $\mathcal{U}\subseteq [d]$,
					\begin{align}
						& \var\qth{\sum_{i\in \mathcal{U}} (\varepsilon_i^2 - \s_i^2\beta) \one(|\varepsilon_i|>\s_i\tau) }  \le C \|\bs{\S}\|_F^2 \tau^4 \exp(-\tau^2/2), \\
						& \var\qth{\sum_{i\in \mathcal{U}} \varepsilon_i^2 \one(|\varepsilon_i|>\s_i\tau) }  \le C \|\bs{\S}\|_F^2 \tau^8 \exp(-\tau^2/3).
					\end{align}
					\item \label{prop:cdf} Let $\tau\ge 0$. Then
					\[
					\var\qth{\sum_{i\in \mathcal{U}} \one(\varepsilon_i^2\le \s_i^2\tau)} \le C \|\tilde{\bs{\S}}\|_F^2 (\tau^2+1)e^{-\tau/3},
					\]
					\item \label{prop:cosine} Let $t\in\RR$, and let $\mu_1,\dots,\mu_d\in\RR$ be such that $\sum_{i=1}^d\one(\mu_i\ne 0)\le s$. Then,
					\[
					\var\qth{\sum_{i=1}^d \cos(t(\mu_i+\varepsilon_i))}
					\le C\pth{\|\tilde{\bs{\S}}\|_F^2 + s \|\tilde{\bs{\S}}\|_F}.
					\]
				\end{enumerate}
			\end{proposition}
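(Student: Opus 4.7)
The plan is to reduce each of \ref{prop:square}--\ref{prop:cosine} to the corresponding pairwise bound of Lemma~\ref{lmm:correlated-gaussian} via the identity $\var\bigl[\sum_i X_i\bigr] = \sum_{i,j}\cov[X_i,X_j]$. After normalizing $\zeta_i := \varepsilon_i/\sigma_i$, each pair $(\zeta_i,\zeta_j)$ is a pair of standard Gaussians with correlation $\nu = \tilde{\bs{\S}}_{ij}$, so the lemma applies directly. Factors of $\sigma_i^2\sigma_j^2$ that appear when we translate back to $\varepsilon_i$ combine with $\tilde{\bs{\S}}_{ij}^2$ to give $\bs{\S}_{ij}^2$ in \ref{prop:square} and \ref{prop:truncate-moments}, while in \ref{prop:cdf} and \ref{prop:cosine} the truncations are written so that the $\sigma_i$'s cancel and the sums produce $\|\tilde{\bs{\S}}\|_F^2$.

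Parts \ref{prop:square} and \ref{prop:cdf} are immediate from Lemma~\ref{lmm:correlated-gaussian}\ref{lmm:square} and Lemma~\ref{lmm:correlated-gaussian}\ref{lmm:cdf} respectively, using $\sum_{i,j}\bs{\S}_{ij}^2 = \|\bs{\S}\|_F^2$ in the former and $\sum_{i,j}\tilde{\bs{\S}}_{ij}^2 = \|\tilde{\bs{\S}}\|_F^2$ in the latter. The first inequality of \ref{prop:truncate-moments} follows in the same way from Lemma~\ref{lmm:correlated-gaussian}\ref{lmm:truncate-moments} together with $\varepsilon_i^2 - \sigma_i^2\beta = \sigma_i^2(\zeta_i^2-\beta)$. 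For the second inequality of \ref{prop:truncate-moments}, I would decompose
\[
\varepsilon_i^2\,\one(|\varepsilon_i|>\sigma_i\tau) = (\varepsilon_i^2 - \sigma_i^2\beta)\,\one(|\varepsilon_i|>\sigma_i\tau) + \sigma_i^2\beta\,\one(|\varepsilon_i|>\sigma_i\tau),
\]
and apply $\var(A+B)\le 2\var(A)+2\var(B)$. The first piece is controlled by the bound just obtained. For the second, observe that $\one(|\varepsilon_i|>\sigma_i\tau) = 1 - \one(\zeta_i^2\le \tau^2)$, so Lemma~\ref{lmm:correlated-gaussian}\ref{lmm:cdf} (with threshold $\tau^2$) bounds each pairwise covariance by $C\tilde{\bs{\S}}_{ij}^2(\tau^4+1)e^{-\tau^2/3}$; multiplying by $\sigma_i^2\sigma_j^2\beta^2$ and summing yields $C\beta^2\tau^4 e^{-\tau^2/3}\|\bs{\S}\|_F^2$. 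The standard estimate $\beta = {\mathbf E}[Z^2\mid |Z|>\tau]\lesssim \tau^2$ for $\tau\ge 1$ then produces the $\tau^8$ prefactor, and $\tau^4 e^{-\tau^2/2}$ is dominated by $\tau^8 e^{-\tau^2/3}$ so the two pieces combine into the stated bound.

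For \ref{prop:cosine}, applying Lemma~\ref{lmm:correlated-gaussian}\ref{lmm:cosine} with $\nu = \tilde{\bs{\S}}_{ij}$ and noise standard deviations $\sigma_i,\sigma_j$ bounds each pairwise covariance by
\[
C\bigl(\tilde{\bs{\S}}_{ij}^2\,|\cos(t\mu_i)\cos(t\mu_j)| + |\tilde{\bs{\S}}_{ij}|\,|\sin(t\mu_i)\sin(t\mu_j)|\bigr).
\]
The contribution of the first term to $\sum_{i,j}$ is at most $\|\tilde{\bs{\S}}\|_F^2$. The key (though routine) observation for the second term is that $\sin(t\mu_i)=0$ whenever $\mu_i=0$, so the sum is supported on pairs in $\cS\times \cS$ with $\cS := \{i:\mu_i\ne 0\}$, $|\cS|\le s$; Cauchy--Schwarz then yields $\sum_{i,j\in\cS}|\tilde{\bs{\S}}_{ij}|\le s\,\bigl(\sum_{i,j\in\cS}\tilde{\bs{\S}}_{ij}^2\bigr)^{1/2}\le s\|\tilde{\bs{\S}}\|_F$, which is precisely the origin of the $s\|\tilde{\bs{\S}}\|_F$ term. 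No step is really a hard obstacle; the only places requiring thought are the $(\cdot - \sigma_i^2\beta) + \sigma_i^2\beta$ split in \ref{prop:truncate-moments} and the sparsity-Cauchy--Schwarz argument in \ref{prop:cosine}, which explains why the bound in \ref{prop:cosine} cleanly separates into a quadratic piece $\|\tilde{\bs{\S}}\|_F^2$ and a linear-in-sparsity piece $s\|\tilde{\bs{\S}}\|_F$.
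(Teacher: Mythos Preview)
Your proposal is correct and matches the paper's proof essentially line for line: the reduction to Lemma~\ref{lmm:correlated-gaussian} via pairwise covariances, the $(\varepsilon_i^2-\sigma_i^2\beta)+\sigma_i^2\beta$ split with $\beta\lesssim\tau^2$ for the second inequality in \ref{prop:truncate-moments}, and the observation that the sine term in \ref{prop:cosine} is supported on $\cS\times\cS$ followed by Cauchy--Schwarz are exactly what the paper does.
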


			\begin{proof}
				Items \ref{prop:square} and \ref{prop:cdf} follow directly from Lemma~\ref{lmm:correlated-gaussian}.
				
				\ref{prop:truncate-moments} Consider the standard Gaussian random variables $\tilde \varepsilon_i = \varepsilon_i / \s_i$, $i\in[d]$. The correlation coefficient between $\tilde \varepsilon_i$ and $\tilde \varepsilon_i$ is $\bs{\S}_{ij}/(\s_i \s_j)$ for $i,j\in[d]$. Thus, we have
				\begin{align}
					& \var\qth{\sum_{i\in \mathcal{U}} (\varepsilon_i^2 - \s_i^2\beta) \one(|\varepsilon_i|>\s_i\tau) } \\
					&=\sum_{i\in \mathcal{U}, j\in \mathcal{U} } \s_i^2\s_j^2\cov((\tilde\varepsilon_i^2 - \beta) \one(|\varepsilon_i|>\s_i\tau),(\tilde\varepsilon_j^2 - \beta) \one(|\varepsilon_j|>\s_j\tau))\\
					&\lesssim  \|\bs{\S}\|_F^2 \tau^4 \exp(-\tau^2/2),    \label{eq:truncate-moment-1}
				\end{align}
				where the last inequality follows from Lemma~\ref{lmm:correlated-gaussian}\ref{lmm:truncate-moments}.
				Next we prove the second inequality in \ref{prop:truncate-moments}. Note that
                \begin{align}
                    \var\qth{\sum_{i\in \mathcal{U}} \varepsilon_i^2 \one(|\varepsilon_i|>\s_i\tau) }
				\le & 2\var\qth{\sum_{i\in \mathcal{U}} (\varepsilon_i^2 - \s_i^2\beta) \one(|\varepsilon_i|>\s_i\tau) } \\
                & + 2 \var\qth{\sum_{i\in \mathcal{U}} \s_i^2\beta \one(|\varepsilon_i|>\s_i\tau) }.
                \end{align}
				Applying Lemma~\ref{lmm:correlated-gaussian}\ref{lmm:cdf} and the fact $\beta\lesssim \tau^2$ (see, e.g., \cite[Lemma 4]{carpentier2019minimax} or \eqref{eq-7} below) yields that
				\begin{align}
					\var\qth{\sum_{i\in \mathcal{U}} \s_i^2\beta \one(|\varepsilon_i|>\s_i\tau) }
					& = \beta^2 \sum_{i\in \mathcal{U}, j\in\mathcal{U}}\s_i^2\s_j^2 \cov(\one(|\tilde\varepsilon_i|>\tau), \one(|\tilde\varepsilon_j|>\tau))\\
					& \lesssim \|\bs{\S}\|_F^2 \tau^8 \exp(-\tau^2/3).\label{eq:truncate-moment-2}
				\end{align}
				The conclusion follows by combining \eqref{eq:truncate-moment-1} and \eqref{eq:truncate-moment-2}.
				
				\ref{prop:cosine} Introducing the notation $\cS=\{i\in[d]:\mu_i\ne 0\}$ we get
				\begin{align}
					\var\qth{\sum_{i=1}^d \cos(t (\mu_i+\varepsilon_i)) }
					& = \sum_{i,j}\cov\qth{ \cos(t (\mu_i+\varepsilon_i)), \cos(t (\mu_j+\varepsilon_j))  }\\
					& \overset{(a)}{\lesssim} \sum_{i,j} \tilde{\bs{\S}}_{ij}^2 + \sum_{i,j\in \cS} |\tilde{\bs{\S}}_{ij}| \\
					& \overset{(b)}{\le} \|\tilde{\bs{\S}}\|_F^2 + s \|\tilde{\bs{\S}}\|_F,
				\end{align}
				where $(a)$ follows from Lemma~\ref{lmm:correlated-gaussian}\ref{lmm:cosine}, and $(b)$ is due to the Cauchy-Schwarz inequality.
			\end{proof}

			\subsection{Proofs for Section \ref{sec:known-sigma} (known $\sigma$)}
			
			\subsubsection{Proof of Theorem \ref{thm:knownvar}}
			
			Consider first the case $s> \|\bs{\S}\|_F$. Then
			\[
			\hat{Q} - \|\bs{\t}\|_2^2
			= \sum_{i=1}^d Y_i^2 - \s^2\s_i^2 -\t_i^2
			= \s^2\sum_{i=1}^d (\varepsilon_i^2-\s_i^2) + 2\s \sum_{i\in\cS} \t_i \varepsilon_i,
			\]
			where $\cS$ denotes the support of $\bs{\t}$.
			Consequently,
			\begin{equation}
				\label{eq:knownvar-0}
				\esp\qth{ (\hat{Q} - \|\bs{\t}\|_2^2 )^2}
				\le 2 \s^4 \var (\| \bs{\varepsilon} \|_2^2) + 8 \s^2 \bs{\t}_{\cS}^{\top} \bs{\S}_{\cS, \cS} \bs{\t}_{\cS},
			\end{equation}
			where $\bs{\t}_{\cS}\in\RR^{\cS}$ is the vector of nonzero entries of $\bs{\t}$, and $\bs{\S}_{\cS, \cS}\in\RR^{\cS\times \cS}$ is the submatrix of $\bs{\S}$ with columns and rows in $\cS$.
			Applying Proposition~\ref{prop:correlated-gaussian}\ref{prop:square} we immediately obtain~\eqref{thmeq:hatq} for $s> \|\bs{\S}\|_F$.

			Consider now the case $s\le \|\bs{\S}\|_F$. Then we have 
			\begin{align}
				\hat Q - \|\bs{\t}\|_2^2
				&= \sum_{i=1}^d (Y_i^2 - \s^2\s_i^2\beta_s)\one(|Y_i|>\s\s_i\tau) - \t_i^2 \\
				&=  \sum_{i\not\in\cS} \s^2(\varepsilon_i^2 - \s_i^2\beta_s) \one(|\varepsilon_i| > \s_i\tau)
				+ \sum_{i\in\cS}(Y_i^2 - \s^2\s_i^2\beta_s - \t_i^2) 
				\\
				& \quad
				- \sum_{i\in\cS} (Y_i^2 - \s^2\s_i^2\beta_s)\one(|Y_i|\le\s\s_i\tau)\\
				&= \sum_{i=1}^d \s^2(\varepsilon_i^2 - \s_i^2\beta_s) \one(|\varepsilon_i| > \s_i\tau)  + \sum_{i\in\cS} 2\s \t_i\varepsilon_i  \label{eq:knownvar-1} \\
				& \qquad + \sum_{i\in\cS}\s^2(\varepsilon_i^2 - \s_i^2\beta_s)\one(|\varepsilon_i|\le\s_i \tau) - \sum_{i\in\cS} (Y_i^2 - \s^2\s_i^2\beta_s)\one(|Y_i|\le\s\s_i\tau). \label{eq:knownvar-2}
			\end{align}
			Notice that for any $\tau\ge 0$ we have  
			\begin{equation}\label{eq-7}
				\beta_s = 1+\frac{\tau\frac{1}{\sqrt{2\pi}}e^{-\tau^2/2}}{ \int_{\tau}^\infty \frac{1}{\sqrt{2\pi}}e^{-x^2/2}dx}
				< 1+\frac{\tau(\tau+\sqrt{\tau^2+4})}{2}
				\le \tau^2 + \tau + 1,   
			\end{equation}
			where the first inequality uses formula~7.1.13 in \cite{AS64}.
			Since $\tau\ge 1$, by the triangle inequality, we get that almost surely, the absolute value of the expression in~\eqref{eq:knownvar-2} satisfies:
			\[
			\abs{  \sum_{i\in\cS}(\varepsilon_i^2 - \s^2\s_i^2\beta_s)\one(|\varepsilon_i|\le\s_i\tau) - \sum_{i\in\cS} (Y_i^2 - \s^2\s_i^2\beta_s)\one(|Y_i|\le\s\s_i\tau)  }
			\lesssim  s \s^2  \tau^2 \max_{i\in\cS} \s_i^2.
			\]
			Next, the mean squared errors of the two terms in~\eqref{eq:knownvar-1} can be controlled by Proposition~\ref{prop:correlated-gaussian}\ref{prop:truncate-moments} and an argument similar to~\eqref{eq:knownvar-0}, respectively.
			It follows that
			\begin{align}
				&\esp\qth{\Big( \sum_{i=1}^d \s^2(\varepsilon_i^2 - \s_i^2\beta_s) \one(|\varepsilon_i| > \s_i\tau) \Big)^2}
				+ \esp\qth{\Big(\sum_{i\in\cS} 2\s \t_i\varepsilon_i )^2} \\
				\lesssim~& \s^4 \|\bs{\S}\|_F^2 \tau^4 e^{-\tau^2/2} + \s^2 \bs{\t}_{\cS}^{\top} \bs{\S}_{\cS, \cS} \bs{\t}_{\cS}.
			\end{align}
			Combining the above bounds we find that 
			\[
			\esp\qth{ (\hat{Q} - \|\bs{\t}\|_2^2)^2 }
			\lesssim \s^2 \|\bs{\t}\|_2^2 \lambda_{\max}(\bs{\S}_{\cS,\cS}) + \s^4 \tau^4 \pth{ s^2 + \|\bs{\S}\|_F^2 e^{-\tau^2/2}}.
			\]
			Recalling that $\tau=3\sqrt{  \log(1 + \|\bs{\S}\|_F^2 / s^2)  }$ finishes the proof of~\eqref{thmeq:hatq} for $s\le \|\bs{\S}\|_F$.

			Inequality \eqref{thmeq:hatq0} follows immediately from \eqref{thmeq:hatq} using the bounds
			\[
			\lambda_{\max}(\bs{\S}_{\cS,\cS})
			\le \lambda_{\max}(\bs{\S}) \wedge \|\bs{\S}_{\cS,\cS}\|_F
			\le \lambda_{\max}(\bs{\S}) \wedge s \max_{i\in \cS} \s_i^2 \le \lambda_{\max}(\bs{\S}) \wedge s.
			\]

			Finally, we deduce \eqref{thmeq:hatn} from \eqref{thmeq:hatq0}. Notice first that $\bar\psi(s,\bs{\S})\le \psi(s,\bs{\S})$.
			Introduce the threshold $t := \s \sqrt{\psi(s,\bs{\S})}$. Consider first the case $\|\bs{\t}\|_2 \le t$. Then, using the inequality $(a - b)^2 \le 2 (a^2 - b^2) + 4 b^2$, and \eqref{thmeq:hatq0} we obtain
			\begin{align}
				\esp[(\hat{N} - \|\bs{\t}\|_2)^2]
				& \le 2 \esp[|\hat{Q}| - \|\bs{\t}\|_2^2] + 4 \|\bs{\t}\|_2^2 \le 2 \sqrt{\esp[(\hat{Q} - \|\bs{\t}\|_2^2)^2]} + 4 t^2 \\
				& \lesssim \s^2 \psi(s,\bs{\S}) + \s t \sqrt{\bar\psi(s,\bs{\S})}  + t^2 \\
				& \lesssim \s^2 \psi(s,\bs{\S}).
			\end{align}
			Next, consider the case $\|\bs{\t}\|_2 > t$. Then, using the inequality $(a - b)^2 \le (a^2 - b^2)^2 / a^2$ for $a > 0, b \ge 0$, and \eqref{thmeq:hatq0} we get
			\begin{align}
				\esp[(\hat{N} - \|\bs{\t}\|_2)^2]
				& \le \frac{\esp [(|\hat Q| - \|\bs{\t}\|_2^2)^2]}{\|\bs{\t}\|_2^2}
				\le \frac{\esp[ (\hat Q - \|\bs{\t}\|_2^2)^2]}{t^2}\\
				& \lesssim \s^2 \bar \psi(s,\bs{\S}) +  \frac{\s^4 \psi^2(s,\bs{\S})}{t^2}\\
				& \lesssim \s^2 \psi(s,\bs{\S}).
			\end{align}
			The proof of \eqref{thmeq:hatn} is complete.
			
			\subsubsection{Proof of Theorem~\ref{thm:lower-known-sigma-rho}}
			
			Given any loss function $w(\cdot)$ with $w(0) = 0$ and $w \not\equiv 0$ and any $\phi > 0$, we define the scaled minimax risk as
			\[
			R^*(s, d, \rho, \phi) := \inf_{\hat{T}}\sup_{(\bs{\t}, \bs{\S}) \in \T(s, d, \rho)} \esp[w(\s^{-1} \phi^{-1/2}|\hat{T} - \|\bs{\t}\|_2|)].
			\]
			Since the diagonal elements of $\bs{\S}$ are all equal to 1, we only need to consider the case $\rho\ge \sqrt{d}$. The proof is based on a reduction of the minimax lower bound with correlated observations to that with independent observations. In particular, we define
			\[
			\tilde R^*(s, d, \phi) := \inf_{\hat{T}}\sup_{\bs{\t} \in B_0(s, d), \bs{\S} = \bs{I}_{d \times d}} \esp[w(\s^{-1} \phi^{-1/2}|\hat{T} - \|\bs{\t}\|_2|)].
			\]
			It is known from \cite{CCT2017} that, for any integers $d',s'$ such that $s'\in [d']$, we have the lower bound
			\begin{equation}\label{eq:rgrtsim1}
				\tilde R^*(s', d', \phi(s', d')) \gtrsim 1.   
			\end{equation}
			We will use this bound with suitably chosen $d',s'$ to prove that $R^*(s, d, \rho, \phi(s,\rho^2))\gtrsim 1$, which is the required result. We consider separately the following two cases.

			\paragraph{Case I: $\sqrt{d}\le \rho \le 2 \sqrt{d}$.} Since the identity matrix satisfies the Frobenius norm constraint in the definition of $\T(s, d, \rho)$ we have that, for any $w(\cdot)$ and $\phi$,
			\[
			R^*(s, d, \rho, \phi) \ge \tilde R^*(s, d, \phi).
			\]
			Set here $\phi=\phi(s, d)$ and note that, in the considered case, $\phi(s, d) \asymp\phi(s,\rho^2)$.  It follows immediately from \eqref{eq:rgrtsim1} with $s'=s$ and $d'=d$ that 
			$R^*(s, d, \rho, \phi(s,\rho^2)) \gtrsim 1$.
			
			
			\paragraph{Case II: $\rho > 2 \sqrt{d}$.}
			Without loss of generality we assume that $s \ge 2$.
			Given any two positive integers $r$ and $p$ such that
			\begin{equation}\label{eq:restr1}
				r p \le d,
			\end{equation}
			we define the following block diagonal covariance matrix:
			\begin{equation}\label{eq:sigma0}
				\bs{\S}_0 = \mathrm{diag}(\underset{p \;\mathrm{blocks}}{\underbrace{\bs{1}_{r \times r}, \cdots, \bs{1}_{r \times r}}}, \bs{I}_{(d - rp) \times (d - rp)}).
			\end{equation}
			Here $\bs{1}_{r \times r}$ is a $r \times r$ dimensional matrix with all entries equal to one, and $\bs{I}_{(d - rp) \times (d - rp)}$ is a $(d - rp) \times (d - rp)$ dimensional identity matrix. We also define a class of vectors $\bs{\t}$ as follows:
			\[
			\tilde{\T} := \left\{(\underset{r \;\mathrm{entries}}{\underbrace{\tilde{\t}_1, \cdots, \tilde{\t}_1}}, \cdots, \underset{r \;\mathrm{entries}}{\underbrace{\tilde{\t}_p, \cdots, \tilde{\t}_p}}, 0, \cdots, 0)^\top\in \RR^d: \sum_{i=1}^p \one (\tilde{\t}_i \neq 0) \le 1 \right\}.
			\]
			If the pair $(r,p)$ is such that
			\begin{equation}\label{eq:restr2}
				r \le s \quad\text{and}\quad r^2 p + d \le \rho^2,
			\end{equation}
			then for any $\bs{\bt}\in\tilde{\T}$, we have $(\bs{\t}, \bs{\S}_0) \in \T(s, d, \rho)$. Moreover, for $\bs{\t}\in\tilde{\T}$, $\bs{\S}=\bs{\S}_0$, model \eqref{eq:model} is equivalent to the model with $p$ independent observations $\tilde{Y}_i \sim \mathcal{N}(\tilde{\t}_i, \s^2)$, $i\in[p]$, such that at most one of $\tilde{\t}_i$'s is nonzero. 
			Since $\|\bs{\t}\|_2 = \sqrt{r} \sqrt{\sum_{i=1}^p \tilde{\t}_i^2}=\sqrt{r} \|\tilde{\bs{\t}}\|_2$, where $\tilde{\bs{\t}}=(\tilde \t_1,\dots,\tilde \t_p)$, we obtain that, for any $\phi>0$,
			\begin{align}\label{eq:r-to-r}
				& R^*(s, d, \rho, \phi) \ge \inf_{\hat{T}}\sup_{\tilde{\bs{\t}} \in B_0(1, p), \bs{\S} = \bs{I}_{p \times p}} \mathbf{E}_{\tilde{\bs{\t}}, \bs{\S}}\left[w\left(\frac{|\hat{T} - \sqrt{r} \|\tilde{\bs{\t}}\|_2|}{\s \phi^{1/2}}\right)\right] \\
				& \qquad\qquad\qquad = \inf_{\hat{T}}\sup_{\tilde{\bs{\t}} \in B_0(1, p), \bs{\S} = \bs{I}_{p \times p}} \mathbf{E}_{\tilde{\bs{\t}}, \bs{\S}}\left[w\left(\frac{|\hat{T} - \|\tilde{\bs{\t}}\|_2|}{\s (r^{-1}\phi)^{1/2}}\right)\right] = \tilde R^*(1, p, r^{-1} \phi).
			\end{align}
			Thus, to complete the proof it remains to find a pair $(r,p)$ satisfying the inequalities in~\eqref{eq:restr1},~\eqref{eq:restr2} and such that $\tilde R^*(1, p, r^{-1} \phi(s,\rho^2))\gtrsim 1$.
			We will do it separately for two cases. 
			
			First, consider the case $s \le \rho$. Then we choose $r$ and $p$ as
			the largest integers such that $r \le s / 2$ and $p \le \frac{\rho^2}{s^2} \wedge \frac{d}{s}$.
			Recalling that $\rho^2>4d$ one can check that such $r$ and $p$ satisfy \eqref{eq:restr1} and~\eqref{eq:restr2}.
			Furthermore,
			\begin{align}\label{eq:proof:th3}
				r \phi(1, p) &\asymp s \log\left(1 + \frac{\rho^2}{s^2} \wedge \frac{d}{s}\right) \asymp s \left(\log\left(1 + \frac{\rho^2}{s^2}\right) \wedge \log\left(1 + \frac{d}{s}\right)  \right)
				\\
				&\asymp s \log\left(1 + \frac{\rho^2}{s^2}\right) = \phi(s, \rho^2), 
			\end{align}
			where we have used the facts that $\rho \le d$, and $\log\left(1 + \frac{\rho^2}{s^2}\right) \asymp \log\left(1 + \frac{\rho}{s}\right)$ for $s\le \rho$. Combining the last two displays, we obtain that 
			there exists an absolute constant $c>0$ such that
			\[
			R^*(s, d, \rho, \phi(s, \rho^2)) \ge \tilde R^*(1, p, c \phi(1, p)) \gtrsim 1.
			\]
			Next, we consider the case $s>\rho$. Then $\phi(s, \rho^2)=\rho$. We choose the pair $(r,p)$ satisfying \eqref{eq:restr1} and~\eqref{eq:restr2} as follows: $r=\lfloor \rho/ 2 \rfloor$ and $p=1$. Clearly, $R^*(1, 1, r^{-1} \rho) \gtrsim 1$, which implies that $R^*(s, d, \rho, \phi(s, \rho^2)) \gtrsim 1$ also in this case.

			\subsection{Proofs for Section \ref{sec:sigma} (estimation of $\sigma^2$)}
			
			\subsubsection{Analysis of $\hat \s_S^2$}
			
			Proposition~\ref{prop:correlated-gaussian}\ref{prop:cdf} allows us to control the variance of the empirical distribution function $\hat F_{\varepsilon^2}(x)$ for any fixed $x\ge 0$. In addition, we have the bound \eqref{eq:FY-Feps}. Consequently, we obtain that, for a deterministic $t\asymp \sigma^2$, the estimator $\hat F_{Y^2}(t)$ converges to $F(t/\s^2)$ with the desired rate $\frac{s+\|\tilde{\bs{\S}}\|_F}{d}$.
			In our construction, we do not operate with fixed $t$ but rather with $\hat t$, which is data-driven and depends on the empirical distribution function, cf. \eqref{eq:def-hat-t}. However, since we use a dyadic grid in \eqref{eq:def-hat-t}, it is enough to control the estimation error for a finite number of values of $t$ to obtain the desired error guarantee for $\hat \s_S^2$. We first prove the bound on the error $\hat{\s}_S^2(t)$ for fixed $t$ as stated in Lemma~\ref{lmm:sigma-t-prob}, then derive the properties of $\hat t$ in Lemma~\ref{lmm:hat-t-property}, and finally obtain the error guarantee for $\hat \s_S^2$.

			\begin{proof}[Proof of Lemma~\ref{lmm:sigma-t-prob}]
				(i) We prove only the bound on the upper tail probability. The proof of the lower tail bound is entirely analogous.
				The definition of $\hat{\s}_S^2(t)$ and the monotonicity of $F$ imply that
				\begin{align}
					\prob\qth{\frac{\hat{\s}_S^2(t)}{\sigma^2}\ge 1+u}
					& =\prob\qth{ \hat F_{Y^2}(t) \le F\pth{\frac{t}{(1+u)\sigma^2}}}\\
					& \le \prob\qth{ F\pth{\frac{t}{\sigma^2}}-\hat F_{\varepsilon^2}\pth{\frac{t}{\sigma^2}}\ge F\pth{\frac{t}{\sigma^2}}- F\pth{\frac{t}{(1+u)\sigma^2}} - \frac{s}{d}  }
					, \label{eq-2}
				\end{align}
				where the last inequality uses \eqref{eq:FY-Feps}.
				Recall that $F(x)=2\Phi(\sqrt{x})-1$ for $x>0$.
				Then, for $|u| \le 1/2$,
				\begin{equation}\label{eq-1}
					\abs{
						F(x)-F(x/(1+u))
					}
					\ge \frac{2}{\sqrt{2\pi}} e^{-\frac{x}{2(1-|u|)}} \sqrt{x} \abs{1-\frac{1}{\sqrt{1+u}}}
					\ge \frac{1}{4} e^{-x}|u| \sqrt{x}.  
				\end{equation}
				Applying Chebyshev's inequality and Proposition~\ref{prop:correlated-gaussian}\ref{prop:cdf} we get
				\begin{align}
					\prob\qth{\frac{\hat{\s}_S^2(t)}{\sigma^2}\ge 1+u}
					& \le \frac{C\frac{ \|\tilde{\bs{\S}}\|_F^2  }{d^2} ( (\frac{t}{\s^2})^2 + 1 ) e^{-\frac{t}{3\sigma^2}} }{[(\frac{1}{4}|u|e^{-\frac{t}{\sigma^2}} (\frac{t}{\sigma^2})^{1/2} - \frac{s}{d})_+]^2}.
				\end{align}
				Part (i) of the lemma follows from this inequality by noticing that $
				1\wedge \frac{x}{[(y-a)_+]^2} \le 1 \wedge \frac{2 (x + a^2)}{y^2}$ for any $x, y, a \ge 0$. Indeed, $1\wedge \frac{x}{[(y-a)_+]^2} = (1 \wedge \frac{\sqrt{x}}{(y-a)_+})^2$, and $1\wedge \frac{\sqrt{x}}{(y-a)_+} = \one\{y \le \sqrt{x} + a\} + \frac{\sqrt{x}}{y - a}\one\{y > \sqrt{x} + a\} \le 1 \wedge \frac{\sqrt{x} + a}{y}$.
				%
				
				We now prove part (ii) of the lemma. For any $t$ satisfying $|t / \s^2 - 1| \le 2 / 3$ and $u=1/2$ we obtain from \eqref{eq-1} that
				\[
				\abs{
					F(t / \s^2)-F(2t/3\s^2))
				}
				\ge \frac{2}{\sqrt{2\pi}} e^{-\frac{5}{3}} \left(\sqrt{\frac{1}{3}} - \sqrt{\frac{2}{9}}\right) \ge 0.015.
				\]
				In view of this remark, part (ii) of the lemma follows from \eqref{eq-2} by applying Chebyshev's inequality and Proposition~\ref{prop:correlated-gaussian}\ref{prop:cdf}.
			\end{proof}
			
			\begin{lemma}\label{lem:quantile}
				For $\alpha \in (0,1)$, let $\hat{q}_\alpha$ be the empirical $\alpha$-th quantile of $\tilde\varepsilon_i^2$'s, that is,
				\[
				\hat{q}_\alpha := \min \{u: \hat{F}_{\varepsilon^2}(u) \ge \alpha \}.
				\]
				Then
				\[
				\esp(\hat{q}_\alpha^2) \le 1 + \frac{8}{(1-\alpha)\sqrt{2\pi}}.
				\]
			\end{lemma}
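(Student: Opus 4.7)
The plan is to control the tail of $\hat{q}_\alpha$ by a Markov-type inequality that only uses the marginal distributions of the $\tilde\varepsilon_i$'s, and then convert this tail bound into a moment bound via the layer-cake formula. The crucial observation is that, although the $\tilde\varepsilon_i$'s are correlated through $\tilde{\bs{\S}}$, Markov's inequality applied to the count of exceedances involves only the expectation of this count, and hence depends only on the $\mathcal{N}(0,1)$ marginals. This is what makes the bound free of any dependence on $\tilde{\bs{\S}}$.

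First I would unpack the definition of $\hat{q}_\alpha$. By right-continuity of $\hat{F}_{\varepsilon^2}$, one has $\hat{q}_\alpha > u$ if and only if $\hat{F}_{\varepsilon^2}(u) < \alpha$, which is equivalent to
\[
\frac{1}{d}\sum_{i=1}^d \one(\tilde\varepsilon_i^2 > u) > 1-\alpha.
\]
Markov's inequality then yields
\[
\prob(\hat{q}_\alpha > u) \le \frac{1}{1-\alpha}\cdot\frac{1}{d}\sum_{i=1}^d \prob(\tilde\varepsilon_i^2 > u) = \frac{\prob(Z^2 > u)}{1-\alpha},
\]
where $Z\sim\mathcal{N}(0,1)$; the final equality uses only that each $\tilde\varepsilon_i$ is standard Gaussian as a marginal.

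Next I would pass to the moment bound via the layer-cake identity $\esp[\hat{q}_\alpha^2] = \int_0^\infty \prob(\hat{q}_\alpha^2 > v)\,dv$. Splitting at $v=1$ and bounding the probability by $1$ on $[0,1]$ gives
\[
\esp[\hat{q}_\alpha^2] \le 1 + \int_1^\infty \prob(\hat{q}_\alpha > \sqrt{v})\,dv \le 1 + \frac{1}{1-\alpha}\int_1^\infty \prob(Z^2 > v)\,dv = 1 + \frac{\esp[(Z^2-1)_+]}{1-\alpha}.
\]
A short integration-by-parts (or the standard identity $\esp[Z^2\one(|Z|\ge a)] = 2a\phi(a) + 2(1-\Phi(a))$ applied at $a=1$) yields $\esp[(Z^2-1)_+] = 2\phi(1) = \frac{2}{\sqrt{2\pi}}e^{-1/2}$, which is comfortably below $\frac{8}{\sqrt{2\pi}}$. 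The stated bound follows immediately.

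There is no real obstacle: the whole argument rests on the fact that Markov's inequality applied at the level of the empirical count is insensitive to the dependence structure, so no concentration inequality (which would degrade under correlation with $\lambda_{\max}(\tilde{\bs{\S}})$ possibly as large as $\sqrt{d}$) is required. The only mildly delicate step is the choice of split point $v=1$, which is natural because $Z^2$ has mean $1$; any other split would produce the same rate in $(1-\alpha)^{-1}$ but a worse numerical constant.
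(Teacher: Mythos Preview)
Your proof is correct and follows essentially the same approach as the paper's: both apply Markov's inequality to $1-\hat F_{\varepsilon^2}(u)$ (using only the $\mathcal N(0,1)$ marginals), then integrate the resulting tail bound via the layer-cake formula with a split at $1$. The only cosmetic difference is that the paper inserts the Mill's-ratio bound $\prob(Z^2>t)\le \tfrac{2}{\sqrt{2\pi t}}e^{-t/2}$ before integrating, whereas you recognize the integral directly as $\esp[(Z^2-1)_+]=2\phi(1)$, which in fact yields a sharper constant.
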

			
			\begin{proof} Since $\hat{F}_{\varepsilon^2}(\cdot)$ is the empirical cdf of $\tilde\varepsilon_i^2$'s, where $\tilde\varepsilon_i$'s are standard normal variables, for any $t>0$ we have
				\[
				\esp[1-\hat{F}_{\varepsilon^2}(t)] = \prob(\tilde\varepsilon_1^2 > t) \le \frac{2e^{-\frac{t}2}}{\sqrt{2 \pi t}}.
				\]
				By Markov's inequality we obtain that, for any $t > 0$,
				\begin{align}
					\prob(\hat{q}_\alpha > t) \le \prob(\hat{F}_{\varepsilon^2}(t) \le \alpha) = \prob(1 - \hat{F}_{\varepsilon^2}(t) \ge 1 - \alpha) \le \frac{2e^{-\frac{t}{2}}}{(1- \alpha)\sqrt{2\pi t}}.
				\end{align}
				Hence
				\[
				\esp[\hat{q}_\alpha^2]
				\le 1 + \int_1^{\infty} \prob(\hat{q}_\alpha^2 > t) dt \le 1 + \int_1^{\infty} \frac{2e^{-\frac{\sqrt{t}}{2}}}{(1-\alpha)\sqrt{2\pi }t^{1/4}}dt \le 1 + \frac{8}{(1-\alpha)\sqrt{2\pi}}.
				\]
			\end{proof}

			\begin{lemma}
				\label{lmm:hat-t-property}
				Let $s/d\le 0.01$.
				Then there exists an absolute constant $c > 0$ such that
				\begin{equation}\label{eq-3}
					\prob\qth{
						{\frac{1}{3}}\sigma^2
						\le \hat t\le \frac{3}{2}\sigma^2
					} \ge 1-\frac{c\|\tilde{\bs{\S}}\|_F^2}{d^2}.  
				\end{equation}
				Furthermore, $\esp[\hat t^2]\lesssim \sigma^4$.	
			\end{lemma}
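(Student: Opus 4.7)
\textbf{Proof plan for Lemma \ref{lmm:hat-t-property}.} The strategy combines the monotonicity of $\hat F_{Y^2}$, the cross bound \eqref{eq:FY-Feps} between $\hat F_{Y^2}$ and $\hat F_{\varepsilon^2}$, and the variance control from Proposition \ref{prop:correlated-gaussian}\ref{prop:cdf} via Chebyshev's inequality.

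For \eqref{eq-3} I would fix two deterministic dyadic landmarks: $t^\star$, the largest dyadic number $\le 3\sigma^2/2$, so that $t^\star/\sigma^2 \in (3/4,\,3/2]$; and $t^{\star\star}$, the largest dyadic number $<\sigma^2/3$, so that $t^{\star\star}/\sigma^2\in[1/6,\,1/3)$ and, by maximality of the exponent, $2\,t^{\star\star}\ge \sigma^2/3$. On the event $\{\hat F_{Y^2}(t^\star)\ge 1/2\}\cap\{\hat F_{Y^2}(t^{\star\star}) < 1/2\}$, monotonicity of $\hat F_{Y^2}$ forces $\hat t\le t^\star\le 3\sigma^2/2$ and $\hat t> t^{\star\star}$, so the next dyadic value $2 t^{\star\star}\ge \sigma^2/3$ gives the lower bound $\hat t\ge \sigma^2/3$. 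Applying \eqref{eq:FY-Feps} with $s/d\le 0.01$, each condition on $\hat F_{Y^2}$ reduces to a corresponding condition on $\hat F_{\varepsilon^2}(t/\sigma^2)$ at a bounded $t/\sigma^2$. Since $F(x)=2\Phi(\sqrt x)-1$ satisfies $F(3/4)>0.61$ and $F(1/3)<0.44$, a constant gap exceeding $0.05$ separates each of the target values $F(t^\star/\sigma^2), F(t^{\star\star}/\sigma^2)$ from $1/2$ even after absorbing the $s/d$ slack. Chebyshev's inequality combined with Proposition \ref{prop:correlated-gaussian}\ref{prop:cdf} then bounds the probability of the complementary event at each landmark by $C\|\tilde{\bs{\S}}\|_F^2/d^2$ (the factor $(u^2+1)e^{-u/3}$ is absorbed into the constant since $u\le 3/2$), and a union bound over the two landmarks yields \eqref{eq-3}.

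For the second moment, set $\hat q:=\min\{u:\hat F_{Y^2}(u)\ge 1/2\}$, the empirical median of the $\tilde Y_i^2$'s. The smallest dyadic number not less than $\hat q$ is at most $2\hat q$ and, by monotonicity, lies in the set defining $\hat t$, hence $\hat t\le 2\hat q$. The bound \eqref{eq:FY-Feps} in the form $\hat F_{Y^2}(t)\ge \hat F_{\varepsilon^2}(t/\sigma^2)-s/d$ forces $\hat q\le \sigma^2\,\hat q_{1/2+s/d}$ in the notation of Lemma \ref{lem:quantile}. Since $\alpha\mapsto\hat q_\alpha$ is nondecreasing and $s/d\le 0.01$, we obtain $\hat t\le 2\sigma^2\,\hat q_{0.51}$, and Lemma \ref{lem:quantile} with $\alpha=0.51$ yields $\esp[\hat q_{0.51}^2]\le 1+8/(0.49\sqrt{2\pi})$, whence $\esp[\hat t^2]\lesssim \sigma^4$.

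The principal difficulty is the first part: one must verify that the explicit gaps $F(3/4)-1/2$ and $1/2-F(1/3)$ strictly exceed the $s/d$ slack plus the deviation threshold fed into Chebyshev's inequality. This is exactly why the assumption $s/d\le 0.01$ is imposed and why a dyadic grid with step $2$ is used---together they keep $t^\star/\sigma^2$ and $t^{\star\star}/\sigma^2$ in bounded intervals where $F$ stays a constant distance from $1/2$, uniformly in $\sigma$, so that $\sigma$-free constants propagate through the argument.
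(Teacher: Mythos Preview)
Your proposal is correct and follows essentially the same approach as the paper: both combine \eqref{eq:FY-Feps}, Chebyshev's inequality with Proposition~\ref{prop:correlated-gaussian}\ref{prop:cdf}, and Lemma~\ref{lem:quantile}. The only cosmetic difference is that the paper works directly with the non-dyadic anchors $\alpha\sigma^2=\sigma^2/3$ and $\beta\sigma^2=3\sigma^2/4$ together with the built-in property $\hat F_{Y^2}(\hat t/2)<0.5$, rather than introducing auxiliary dyadic landmarks $t^\star,t^{\star\star}$ and the empirical median $\hat q$.
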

			\begin{proof}
				Since $F$ is the cdf of the $\chi^2_1$ distribution we have $F(\alpha)+\tau<0.5<F(\beta)-\tau$ for $\alpha=\frac{1}{3}$, $\beta=\frac{3}{4}$ and $\tau=0.01$.
				By Chebyshev's inequality and Proposition~\ref{prop:correlated-gaussian}\ref{prop:cdf},
				\begin{align}
					\prob[\hat F_{\varepsilon^2}(\beta )-\tau \le 0.5]
					& \le \frac{\esp \qth{|\hat F_{\varepsilon^2}( \beta ) - F(\beta) |^2}}{|F(\beta )-\tau-0.5|^2}
					\lesssim \frac{\|\tilde{\bs{\S}}\|_F^2}{d^2},\\
					\prob[\hat F_{\varepsilon^2}(\alpha )+\tau \ge 0.5]
					& \le \frac{\esp \qth{|\hat F_{\varepsilon^2}( \alpha ) - F(\alpha) |^2}}{|F(\alpha )+\tau-0.5|^2}
					\lesssim \frac{\|\tilde{\bs{\S}}\|_F^2}{d^2}.
				\end{align}
				By the definition of $\hat t$, we have $\hat F_{Y^2}(\hat t)\ge 0.5$ and $
				\hat F_{Y^2}(\hat t/2)< 0.5.$
				Combining the above remarks and~\eqref{eq:FY-Feps} we obtain that, with probability at least $1-c \|\tilde{\bs{\S}}\|_F^2 / d^2$, where $c>0$ is an absolute constant,
				\begin{align}
					& \hat F_{Y^2}(\hat t)\ge 0.5 > \hat F_{\varepsilon^2}(\alpha )+\tau
					\ge \hat F_{Y^2}(\alpha \sigma^2),\\
					& \hat F_{Y^2}(\hat t/2)< 0.5 < \hat F_{\varepsilon^2}(\beta )-\tau
					\le \hat F_{Y^2}(\beta \sigma^2).
				\end{align}
				The bound \eqref{eq-3} follows from these relations and the monotonicity of $\hat F_{Y^2}$.
				Next, from~\eqref{eq:FY-Feps} and the assumption $s/d\le \tau$ we obtain that $\hat F_{\varepsilon^2}(\frac{\hat t}{2\sigma^2})
				\le \hat F_{Y^2}\pth{\hat t /2} + \tau
				<0.6
				$.
				It means that
				$\frac{\hat{t}}{2 \s^2}\le \hat{q}_{0.6}$,  where $\hat{q}_{0.6}$ is the $0.6$-th empirical quantile of $\tilde\varepsilon_i^2$'s. Applying Lemma~\ref{lem:quantile} we conclude that $\esp[\hat t^2]\lesssim \sigma^4$. 
			\end{proof}

			Consider the set
			\[
			\calT
			:= \sth{ t=2^{\ell}: \ell\in \ZZ, \frac{\sigma^2}{3}\le t \le \frac{3}{2}\sigma^2  }.
			\]
			Note that this set contains not more than 3 elements. Indeed, if $\calT$ is non-empty and $\ell^*$ is the smallest value in $\ZZ$ such that $\frac{\sigma^2}{3}\le 2^{\ell^*} \le \frac{3}{2}\sigma^2$ then $2^{\ell^*+3}\ge 2^4 {\sigma^2}/3 > \frac{3}{2}\sigma^2$.
			
			\Cref{lmm:hat-t-property} implies that there is an absolute constant $c>0$ such that $\hat t \in \calT$ with probability at least $1 - c \|\tilde{\bs{\S}}\|_F^2 / d^2$. 
			Before proving \Cref{thm:hat-sigma-sparse}, we first show that there exists an absolute constant $c>0$ such that $\hat\sigma_S^2\asymp \sigma^2$ with probability at least $1 - c\|\tilde{\bs{\S}}\|_F^2 / d^2$.

			\begin{lemma}
				\label{lmm:hat-sigma-whp}
				There exists an absolute constant $c > 0$ such that
				\[
				\prob\qth{\frac{1}{2}\sigma^2\le \hat \sigma_S^2 \le \frac{3}{2}\sigma^2}
				\ge 1- c \frac{\|\tilde{\bs{\S}}\|_F^2}{d^2}.
				\]
			\end{lemma}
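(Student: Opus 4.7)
The plan is to combine the high-probability event $\{\hat t\in\calT\}$ established in Lemma~\ref{lmm:hat-t-property} with a union bound over the finitely many candidate values of $\hat t$, controlling each one via Lemma~\ref{lmm:sigma-t-prob}(ii). Since $\calT$ consists of dyadic points inside $[\sigma^2/3,\,3\sigma^2/2]$, a short verification shows that $|\calT|\le 3$: if $\ell^*$ is the smallest integer with $2^{\ell^*}\ge \sigma^2/3$, then $2^{\ell^*+3}\ge 16\sigma^2/6 > 3\sigma^2/2$, so there can be at most three dyadic points in the interval.

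Next I would check that every $t\in\calT$ lies in the regime covered by Lemma~\ref{lmm:sigma-t-prob}(ii): since $t/\sigma^2\in[1/3,3/2]$ we have $|t/\sigma^2-1|\le 2/3$, and the hypothesis $s/d\le 0.01$ can be ensured by choosing the absolute constant $c$ in the statement small enough. Thus Lemma~\ref{lmm:sigma-t-prob}(ii) applies to every fixed $t\in\calT$, giving
\[
\prob\qth{\abs{\frac{\hat\s_S^2(t)}{\s^2}-1}\ge \frac{1}{2}}\le C\,\frac{\|\tilde{\bs{\S}}\|_F^2}{d^2}.
\]

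Now define the events
\[
A:=\{\hat t\in\calT\},\qquad B:=\bigcap_{t\in\calT}\sth{\abs{\frac{\hat\s_S^2(t)}{\s^2}-1}\le\frac{1}{2}}.
\]
By Lemma~\ref{lmm:hat-t-property}, $\prob(A^c)\lesssim \|\tilde{\bs{\S}}\|_F^2/d^2$. By the bound above and a union bound over at most three elements of $\calT$, $\prob(B^c)\lesssim \|\tilde{\bs{\S}}\|_F^2/d^2$. On $A\cap B$, $\hat\s_S^2=\hat\s_S^2(\hat t)$ with $\hat t\in\calT$, so $|\hat\s_S^2/\s^2-1|\le 1/2$, which gives $\sigma^2/2\le \hat\s_S^2\le 3\sigma^2/2$. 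Combining the two tail bounds by another union bound yields $\prob((A\cap B)^c)\lesssim \|\tilde{\bs{\S}}\|_F^2/d^2$, which is the claim.

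There is no substantive obstacle here beyond the two ingredients already in place; the crux of the argument — restricting $\hat t$ to a set of bounded cardinality — is precisely the feature of the dyadic scan that the authors emphasized as essential for handling dependent data. The proof is thus mostly a book-keeping step, assembling Lemmas~\ref{lmm:sigma-t-prob} and~\ref{lmm:hat-t-property} through a union bound over the $O(1)$-size grid $\calT$.
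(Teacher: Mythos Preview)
Your proof is correct and essentially identical to the paper's: both restrict to the event $\{\hat t\in\calT\}$ via Lemma~\ref{lmm:hat-t-property}, then apply Lemma~\ref{lmm:sigma-t-prob}(ii) with a union bound over the at most three dyadic points in $\calT$. One minor imprecision: the sentence ``the hypothesis $s/d\le 0.01$ can be ensured by choosing the absolute constant $c$ in the statement small enough'' is not right, since $c$ there is the constant in the probability bound; the condition $s/d\le 0.01$ is simply a standing assumption inherited from Lemma~\ref{lmm:hat-t-property} (and ultimately from the hypothesis $s\le cd$ in Theorem~\ref{thm:hat-sigma-sparse}), exactly as the paper uses it.
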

			\begin{proof}
				Recall that, by definition, $\hat\sigma_S^2 = \hat{\s}_S^2(\hat t)$.
				We have
				\begin{align}
					\prob\qth{ \abs{\frac{\hat\sigma_S^2}{\sigma^2}-1}>\frac{1}{2} }
					& \le \prob\qth{ \abs{\frac{\hat{\s}_S^2(\hat t)}{\sigma^2}-1}>\frac{1}{2}, \hat t\in \calT} + c \frac{\|\tilde{\bs{\S}}\|_F^2}{d^2}\\
					& \le \sum_{t\in\calT }\prob\qth{ \abs{\frac{\hat{\s}_S^2(t)}{\sigma^2}-1}>\frac{1}{2}} + c \frac{\|\tilde{\bs{\S}}\|_F^2}{d^2}.
				\end{align}
				The conclusion follows from \Cref{lmm:sigma-t-prob}(ii) and the fact that $\calT$ contains not more than 3 elements.
			\end{proof}
			
			\begin{proof}[Proof of \Cref{thm:hat-sigma-sparse}]
				Consider the event $E:= \{ \frac{1}{2}\sigma^2 \le \hat \sigma_S^2 \le \frac{3}{2}\sigma^2, \hat t\in\calT \}$. \Cref{lmm:hat-t-property,lmm:hat-sigma-whp} imply that  $\prob(E)\ge 1-c \frac{\|\tilde{\bs{\S}}\|_F^2}{d^2}$, where $c>0$ is an absolute constant.
				We have
				\[
				\esp\abs{\frac{\hat \sigma_S^2}{\sigma^2}-1}
				=\esp\left(\abs{\frac{\hat \sigma_S^2}{\sigma^2}-1}\one(E)\right)
				+\esp\left(\abs{\frac{\hat \sigma_S^2}{\sigma^2}-1}\one(E^c)\right).
				\]
				By the definition of $\hat t$ we have $\hat F_{Y^2}(\hat t)\ge 0.5$. Thus, 
				\begin{equation}\label{eq:sigscale}
					\frac{\hat \sigma_S^2}{\sigma^2}
					=\frac{\hat t}{\sigma^2 F^{-1}( \hat F_{Y^2}(\hat t) ) }
					\lesssim \frac{\hat t}{\sigma^2}.
				\end{equation}
				Applying the Cauchy-Schwarz inequality and \Cref{lmm:hat-t-property} yields
				\begin{equation}
					\label{eq:sig-S-const}
					\esp\qth{\abs{\frac{\hat \sigma_S^2}{\sigma^2}-1}\one(E^c)}
					\lesssim \frac{\|\tilde{\bs{\S}}\|_F}{d} \sqrt{1+\esp\qth{(\hat t/\sigma^2)^2}}
					\lesssim \frac{\|\tilde{\bs{\S}}\|_F}{d}.
				\end{equation}
				Next, note that
				\begin{align}
					\abs{
						F(\hat t/\sigma^2) - F(\hat t/\hat\sigma_S^2)
					}\one(E)
					\ge \min_{\xi\in[{\frac{2}{9}},3]}|F'(\xi)| \, \abs{\frac{\hat t}{\sigma^2} - \frac{\hat t}{\hat\sigma_S^2}}\one(E)
					\gtrsim \abs{\frac{\hat t}{\sigma^2} - \frac{\hat t}{\hat\sigma_S^2}}\one(E).
				\end{align}
				Using this inequality and \eqref{eq:sigscale} we find 
				\begin{align}
					&\phantom{{}={}}\esp\qth{\abs{\frac{\hat \sigma_S^2}{\sigma^2}-1}\one(E)}
					=\esp\qth{\frac{\hat\sigma_S^2}{\hat t}\abs{\frac{\hat t}{\sigma^2} - \frac{\hat t}{\hat\sigma_S^2}} \one(E) } \\
					& \lesssim  \esp\qth{\abs{F(\hat t/\sigma^2) - F(\hat t/\hat\sigma_S^2)}\one(E)}
					= \esp\qth{\abs{F(\hat t/\sigma^2) - \hat F_{Y^2}(\hat t) }\one(E)} \\
					& \le \sum_{t\in\calT}\esp\abs{F(t/\sigma^2) - \hat F_{Y^2}(t) }
					\lesssim \frac{s+\|\tilde{\bs{\S}}\|_F}{d}\label{eq:ub-mse},
				\end{align}
				where the last step follows from Proposition~\ref{prop:correlated-gaussian}\ref{prop:cdf}, \eqref{eq:FY-Feps} and the fact that $\calT$ contains at most 3 elements. Combining \eqref{eq:sig-S-const} and \eqref{eq:ub-mse} concludes the proof.
			\end{proof}

			\subsubsection{Analysis of $\hat \s^2_D$}
			
			In this subsection, we prove \Cref{thm:hat-sigma-dense}. We first evaluate the accuracy of  $\hat\varphi_d(\varz)$ considered as an estimator of the normal characteristic function $e^{-\frac{\varz^2\sigma^2}{2}}$. The following lemma
			gives a bound on the relative error of $\hat\varphi_d(\varz)$. 

			\begin{lemma}
				\label{lmm:phi-multiplicative}
				For any $z\in\mathbb{R}$ we have
				\begin{equation}
					\label{eq:phi-MSE}
					\esp\qth{\pth{\frac{\hat\varphi_d(\varz)}{e^{-\frac{\varz^2\sigma^2}{2}}}-1}^2}
					\lesssim \frac{s^2}{d^2} + \frac{\|\tilde{\bs{\S}}\|_F^2}{d^2}\pth{\frac{s}{\|\tilde{\bs{\S}}\|_F}\vee 1} \exp\pth{\varz^2\sigma^2}.
				\end{equation}
				Furthermore, if $s\le {d}/{8}$ there exists an absolute constant $C>0$ such that, for any $z\in\mathbb{R}$,
				\begin{equation}
					\label{eq:high-prob-phi}
					\prob\qth{ \frac{1}{2}e^{-\frac{\varz^2\s^2}{2}} \le \hat\varphi_d(\varz) \le \frac{3}{2}e^{-\frac{\varz^2\s^2}{2}}  }
					\ge 1- C\frac{\|\tilde{\bs{\S}}\|_F^2}{d^2}\pth{\frac{s}{\|\tilde{\bs{\S}}\|_F}\vee 1} \exp\pth{\varz^2\sigma^2}.
				\end{equation}
			\end{lemma}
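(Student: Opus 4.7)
The plan is to do a standard bias-plus-variance decomposition of $\hat\varphi_d(z)$ as an estimator of $\varphi(z):=e^{-z^2\s^2/2}$, using Proposition~\ref{prop:correlated-gaussian}\ref{prop:cosine} as the only nontrivial tool. First I would compute the expectation: for each $i$, the identity \eqref{eq:expect-cos} gives $\esp[\cos(z\tilde Y_i)] = \cos(z\tilde\t_i)\,e^{-z^2\s^2/2}$, hence
\[
\esp[\hat\varphi_d(z)] - \varphi(z) = \frac{\varphi(z)}{d}\sum_{i\in\cS}(\cos(z\tilde\t_i) - 1),
\]
where $\cS$ is the support of $\tilde\bt$. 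Since $|\cos(\cdot)-1|\le 2$ and $|\cS|\le s$, this gives the bias bound $|\esp[\hat\varphi_d(z)] - \varphi(z)| \le \frac{2s}{d}\varphi(z)$, so after dividing by $\varphi(z)$ the squared relative bias is at most $4s^2/d^2$.

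Next I would bound the variance. Applying Proposition~\ref{prop:correlated-gaussian}\ref{prop:cosine} to the Gaussian vector $\s\tilde\beps \sim \cN(\bs{0},\s^2\tilde{\bs{\S}})$ (whose correlation matrix is $\tilde{\bs{\S}}$) with parameters $t=z$ and $\mu_i=\tilde\t_i$, we obtain
\[
\var[d\,\hat\varphi_d(z)] = \var\Big[\sum_{i=1}^d \cos(z(\tilde\t_i+\s\tilde\varepsilon_i))\Big]
\lesssim \|\tilde{\bs{\S}}\|_F^2 + s\|\tilde{\bs{\S}}\|_F \lesssim \|\tilde{\bs{\S}}\|_F^2\pth{\frac{s}{\|\tilde{\bs{\S}}\|_F}\vee 1}.
\]
Dividing the second-moment decomposition $\esp[(\hat\varphi_d(z)-\varphi(z))^2] = (\text{bias})^2 + \var(\hat\varphi_d(z))$ by $\varphi(z)^2=e^{-z^2\s^2}$ and substituting the two displays yields exactly \eqref{eq:phi-MSE}.

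For the high-probability bound \eqref{eq:high-prob-phi}, I would combine the same variance estimate with Chebyshev's inequality. Under $s\le d/8$ the relative bias is at most $2s/d\le 1/4$, so the event $\{\hat\varphi_d(z)/\varphi(z)\notin[\tfrac{1}{2},\tfrac{3}{2}]\}$ is contained in $\{|\hat\varphi_d(z)-\esp\hat\varphi_d(z)|\ge \tfrac{1}{4}\varphi(z)\}$, and Chebyshev applied with the above variance bound gives the desired probability.

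The argument is essentially routine once the variance lemma from Section~\ref{sec:properties-gaussian} is available; the only point that deserves care is the check that Proposition~\ref{prop:correlated-gaussian}\ref{prop:cosine} is applicable after the rescaling $\tilde\beps\mapsto\s\tilde\beps$ (which leaves the correlation matrix $\tilde{\bs{\S}}$ invariant), ensuring that the bound depends on $\|\tilde{\bs{\S}}\|_F$ and not on $\|\bs{\S}\|_F$ or on $\s$. All other steps are direct consequences of the Gaussian characteristic function identity and of elementary inequalities.
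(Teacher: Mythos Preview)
Your proposal is correct and follows essentially the same approach as the paper: a bias--variance decomposition using the Gaussian characteristic function identity \eqref{eq:expect-cos} for the bias and Proposition~\ref{prop:correlated-gaussian}\ref{prop:cosine} for the variance, followed by Chebyshev's inequality for the high-probability statement. The only cosmetic difference is that you use the exact identity $\esp[(\hat\varphi_d(z)-\varphi(z))^2]=\text{bias}^2+\var(\hat\varphi_d(z))$, whereas the paper bounds this by $2\,\text{bias}^2+2\,\var(\hat\varphi_d(z))$; this is immaterial under the $\lesssim$ convention.
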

			\begin{proof}
				Applying \eqref{eq:expect-cos} yields
				\begin{align}\label{eq-4}
					\abs{ \frac{\esp[\hat\varphi_d(\varz)]}{e^{-\frac{\varz^2\sigma^2}{2}}}-1 }
					=  \abs{ \frac{1}{d}\sum_{i=1}^d\cos(\varz \t_i) - 1 }
					\le \frac{2s}{d}.
				\end{align}
				Consequently,
				\[
				\esp\qth{\pth{\frac{\hat\varphi_d(\varz)}{e^{-\frac{\varz^2\sigma^2}{2}}}-1}^2}
				\le 2\esp\qth{\pth{\frac{\hat\varphi_d(\varz)-\esp\qth{\hat\varphi_d(\varz)}}{e^{-\frac{\varz^2\sigma^2}{2}}}}^2} + 2 \pth{\frac{2s}{d}}^2.
				\]
				The bound \eqref{eq:phi-MSE} follows this inequality and Proposition~\ref{prop:correlated-gaussian}\ref{prop:cosine}.
				
				If $s\le {d}/{8}$ then \eqref{eq-4} implies that
				\begin{align}
					\abs{ \esp[\hat\varphi_d(\varz)] - e^{-\frac{\varz^2\sigma^2}{2}} }
					\le e^{-\frac{\varz^2\sigma^2}{2}}/4.
				\end{align}
				Under this condition, the left hand side of \eqref{eq:high-prob-phi} is not smaller than the probability
				\[
				\prob\qth{ |\hat\varphi_d(\varz) - \esp[\hat\varphi_d(\varz)] | \le e^{-\frac{\varz^2\sigma^2}{2}} /4  }
				\ge 1 - 16 e^{\varz^2\sigma^2} \var[\hat\varphi_d(\varz)]
				.
				\]
				Using this remark and Proposition~\ref{prop:correlated-gaussian}\ref{prop:cosine} we obtain \eqref{eq:high-prob-phi}.
			\end{proof}

			\begin{proof}[Proof of \Cref{thm:hat-sigma-dense}]
				Recall that $\hat\s_D^2=\min\{\tilde\s_D^2, 2\hat\s_S^2\}$, where
				\[
				\tilde\s_D^2
				= -\frac{2\hat t}{\lambda}\log |\hat\varphi_d( (\lambda/\hat t)^{1/2}  )|
				\]
				and $\lambda=\frac{1\vee \log(s/\|\tilde{\bs{\S}}\|_F)}{6}$.
				
				Define the event $E=\{\hat \s_S^2\ge \frac{1}{2}\s^2, \hat t\in\calT,  \frac{1}{2}e^{-\frac{\lambda \s^2}{2\hat t}}\le \hat\varphi_d( (\lambda/\hat t)^{1/2} ) \le \frac{3}{2}e^{-\frac{\lambda \s^2}{2\hat t}} \}$.
				Then,
				\begin{align}
					\esp\qth{\abs{ \frac{\hat\s_D^2}{\s^2}-1  }  }
					= \esp\qth{\abs{ \frac{\hat\s_D^2}{\s^2}-1  }\one(E)  }
					+ \esp\qth{\abs{ \frac{\hat\s_D^2}{\s^2}-1  }\one(E^c)  }.  \label{eq:hat-s-D-decompose}
				\end{align}
				Next, we bound separately the two terms on the right hand side of \eqref{eq:hat-s-D-decompose}.

				On the event $E$ we have $\s^2\le 2\hat\s_S^2$, which together with the fact that $\hat\s_D^2 = \min\{\tilde\s_D^2, 2\hat\s_S^2\}$ implies that $|\hat\s_D^2-\s^2|\one(E) \le |\tilde\s_D^2-\s^2| \one(E)$.
				Using the inequality $|\log x-\log y|\le \frac{|x-y|}{x\wedge y}$, $\forall x,y> 0$, we obtain:
				\begin{align}
					\abs{ \frac{\hat\s_D^2}{\s^2}-1  }\one(E)
					& \le \abs{ \frac{\hat\s_D^2(\hat t)}{\s^2}-1  }\one(E)
					= \frac{1}{\s^2}\abs{ -\frac{2\hat t}{\lambda}\pth{\log|\hat \varphi_d( (\lambda/\hat t)^{1/2}  )| - \log e^{-\frac{\lambda \sigma^2}{2\hat t}}  }}\one(E)
					\\
					& \le \frac{2\hat t}{\lambda \s^2}    \frac{|\hat\varphi_d( (\lambda/\hat t)^{1/2}  ) - e^{-\frac{\lambda \sigma^2}{2\hat t}}  |}{\hat\varphi_d( (\lambda/\hat t)^{1/2}) \wedge  e^{-\frac{\lambda \sigma^2}{2\hat t}} }   \one(E)
					\\
					& \le \frac{6}{\lambda} \abs{ \frac{\hat\varphi_d( (\lambda/\hat t)^{1/2}  )}{e^{-\frac{\lambda \sigma^2}{2\hat t}}} - 1  }   \one(E),
				\end{align}
				where the last inequality follows from the conditions $\hat t/\sigma^2\le 3/2$ and $\hat\varphi_d( (\lambda/\hat t)^{1/2})\ge e^{-\frac{\lambda \sigma^2}{2\hat t}}/2$ that hold on~$E$.
				Using the fact that $\hat t\in \calT$ under $E$, where $\calT$ contains at most 2 elements, and applying \Cref{lmm:phi-multiplicative} we find:
				\begin{align}
					\esp\qth{\abs{ \frac{\hat\s_D^2}{\s^2}-1  }\one(E)}
					& \le \sum_{t\in\calT} \frac{6}{\lambda} \esp\abs{ \frac{\hat\varphi_d( (\lambda/ t)^{1/2}  )}{e^{-\frac{\lambda \sigma^2}{2 t}}}-1  }
					\\
					& \lesssim \frac{1}{\lambda}\max_{t\in \calT} \pth{\frac{s}{d} + \frac{\|\tilde{\bs{\S}}\|_F}{d}\exp\pth{ \frac{\lambda\sigma^2}{2t}}\sqrt{\frac{s}{\|\tilde{\bs{\S}}\|_F}\vee 1} } 
					\\
					& \le \frac{s}{\lambda d} +  \frac{ e^{3\lambda/2} \|\tilde{\bs{\S}}\|_F}{\lambda d}\sqrt{\frac{s}{\|\tilde{\bs{\S}}\|_F}\vee 1}.\label{eq:dense-decompose-ub1}
				\end{align}
				We now consider the second term on the right hand side of \eqref{eq:hat-s-D-decompose}. Note first that since $\calT$ contains at most 3 elements and $\s^2/3 \le t\le 3\s^2/2$ for $t\in\calT$, we get by applying \Cref{lmm:phi-multiplicative} that
				\[
				\prob\qth{ \frac{1}{2}e^{-\frac{\lambda \s^2}{2 t}}\le \hat\varphi_d( (\lambda/ t)^{1/2} ) \le \frac{3}{2}e^{-\frac{\lambda \s^2}{2 t}},\forall t\in\calT   }
				\ge 1-C\frac{\|\tilde{\bs{\S}}\|_F^2}{d^2}\pth{\frac{s}{\|\tilde{\bs{\S}}\|_F}\vee 1} \exp\pth{3\lambda}.
				\]
				Combining this inequality with \Cref{lmm:hat-t-property,lmm:hat-sigma-whp} we get
				\begin{align}\label{eq-5}
					\prob[E^c]\lesssim \frac{\|\tilde{\bs{\S}}\|_F^2}{d^2}\pth{\frac{s}{\|\tilde{\bs{\S}}\|_F}\vee 1} \exp\pth{3\lambda}.
				\end{align}
				Using the fact that $\hat\s_D^2\le 2\hat \s_S^2$ and the Cauchy-Schwarz inequality we find:
				\begin{align}
					\esp\qth{\abs{ \frac{\hat\s_D^2}{\s^2}-1  }\one(E^c) }
					&\le \sqrt{\esp\qth{\pth{ \frac{\hat\s_D^2}{\s^2}-1   }^2}} \sqrt{\prob[E^c]}
					\\
					&\le \sqrt{2+2\esp\qth{(2\hat \s_S^2/\s^2)^2}}\sqrt{\prob[E^c]}. \label{eq:dense-decompose-ub2}
				\end{align}
				From Lemma~\ref{lmm:hat-t-property} and \eqref{eq:sigscale} we have $\esp\qth{(\hat \s_S^2/\s^2)^2}\lesssim 1$. Using this remark and combining \eqref{eq:hat-s-D-decompose} \eqref{eq:dense-decompose-ub1}, \eqref{eq-5}, and 
				\eqref{eq:dense-decompose-ub2} we obtain
				\begin{equation}\label{eq-6}
					\esp\qth{\abs{ \frac{\hat\s_D^2}{\s^2}-1  }  }
					\lesssim \frac{s}{\lambda d} +  \left(1+\frac{1}{\lambda}\right) \frac{ e^{3\lambda/2} \|\tilde{\bs{\S}}\|_F}{ d }\pth{\frac{s}{\|\tilde{\bs{\S}}\|_F}\vee 1}^{1/2}. 
				\end{equation}
				Recall that $\lambda = \big(1 \vee \log \frac{s}{\|\tilde{\bs{\S}}\|_F}\big)/6$. If $s\le e\|\tilde{\bs{\S}}\|_F$ the right hand side of \eqref{eq-6} does not exceed $C\|\tilde{\bs{\S}}\|_F/d$ for an absolute constant $C>0$. In the case $s> e\|\tilde{\bs{\S}}\|_F$ we have $\lambda = \big(\log \frac{s}{\|\tilde{\bs{\S}}\|_F}\big)/6$ and the right hand side of \eqref{eq-6} is of the order $\frac{s}{\lambda d}$ since 
				$$
				\frac{ e^{3\lambda/2} \|\tilde{\bs{\S}}\|_F}{ d }\pth{\frac{s}{\|\tilde{\bs{\S}}\|_F}\vee  
					1}^{1/2} \le  \frac{s}{ \big(s/\|\tilde{\bs{\S}}\|_F\big)^{1/4} d}
				\le \frac{s}{\lambda d}.
				$$
				This completes the proof.
			\end{proof}

			\subsection{Proofs for Section \ref{sec:unknown-sigma} (unknown $\sigma$)}
			
			In this section, we prove Theorem~\ref{thm:unknownvar1} using the estimators of the noise level $\s^2$ constructed in Section~\ref{sec:sigma}. Under Assumptions~\ref{as:var0} and~\ref{as:var1}, we have $\bs{\S}_{ij}/(\s_i\s_j) \asymp \bs{\S}_{ij}$ and thus $\|\bs{\S}\|_F \asymp \|\tilde{\bs{ \S}}\|_F$. Hence, we can replace $\|\tilde{\bs{ \S}}\|_F$ by $\|\bs{\S}\|_F$ in the theorems of Section~\ref{sec:sigma} and the bounds on estimation errors remain valid to within a constant factor.
			
			\subsubsection{Proof of part (i) of Theorem~\ref{thm:unknownvar1}}
			For $s > \| \bs{\S} \|_F$  we have
			\begin{align}
				\abs{\hat{N}^* - \|\bt\|_2}
				&= \abs{\sqrt{\abs{\sum_{i=1}^d Y_i^2-\hat \s_D^2 \s_i^2 }} - \|\bs{\t}\|_2}
				\\
				& \le
				\abs{ \sqrt{ \abs{ \|\bs{\t}\|_2^2 + 2\s \bt^\top \bs{\varepsilon} } } - \|\bs{\t}\|_2}
				+\sqrt{\abs{ \sum_{i=1}^d \s^2\varepsilon_i^2 - \hat \s_D^2\s_i^2 }},
				\label{eq:hat-Nstar-dense-1}
			\end{align}
			where the upper bound is due to the inequality $|\sqrt{|x+y|}-z|\le |\sqrt{|x|}-z|+\sqrt{|y|}$.
			Due to the inequality $|\sqrt{|1+x|}-1|\le |x|$, the first term on the right hand side of \eqref{eq:hat-Nstar-dense-1} admits the bound
			\[
			\abs{\sqrt{ \abs{\|\bs{\t}\|_2^2 + 2\s \bt^\top \bs{\varepsilon}} } - \|\bs{\t}\|_2}
			\le \frac{2 \s |\bt^\top \bs{\varepsilon}|}{\|\bt\|_2}.
			\]
			Since ${\mathbf E}(\bs{\varepsilon} \bs{\varepsilon}^\top)=\bs{\S}$
			we obtain
			\begin{equation}
				\label{eq:hat-Nstar-dense-2}
				{\mathbf E} \abs{\sqrt{ \abs{\|\bs{\t}\|_2^2 + 2\s \bt^\top \bs{\varepsilon}} } - \|\bs{\t}\|_2}^2
				\le 4 \s^2 \lambda_{\max} (\bs{\S})
				\le 4 \s^2 \| \bs{\S} \|_F.
			\end{equation}
			To handle the second term on the right hand side of \eqref{eq:hat-Nstar-dense-1} we first write
			\[
			\abs{ \sum_{i=1}^d \Big(\varepsilon_i^2 -  \frac{\hat \s_D^2}{\s^2}\s_i^2\Big) }
			\le \abs{ \sum_{i=1}^d (\varepsilon_i^2 - \s_i^2) }
			+
			\abs{ \sum_{i=1}^d \s_i^2 \pth{1 - \frac{\hat \s_D^2}{\s^2} }}.
			\]
			Assumption~\ref{as:var0} implies that $\sum_{i=1}^d \s_i^2\le d$. Therefore, we can use Theorem~\ref{thm:hat-sigma-dense} to get
			\begin{equation}
				\esp\abs{ \sum_{i=1}^d \s_i^2 \pth{1 - \frac{\hat \s_D^2}{\s^2} }}
				\lesssim \| \bs{\S} \|_F +
				\frac{s}{1 \vee \log\frac{s}{\|{\bs{\S}}\|_F} }.
			\end{equation}
			Furthermore, $\esp \abs{ \sum_{i=1}^d (\varepsilon_i^2 - \s_i^2) }\le \sqrt{2}\|{\bs{\S}}\|_F$ by
			Proposition~\ref{prop:correlated-gaussian}~\ref{prop:var-chi}. It follows that
			\begin{equation}
				\label{eq:hat-Nstar-dense-3}
				\esp\abs{ \sum_{i=1}^d \Big(\varepsilon_i^2 -  \frac{\hat \s_D^2}{\s^2}\s_i^2\Big) }
				\lesssim \| \bs{\S} \|_F +
				\frac{s}{1 \vee \log\frac{s}{\|{\bs{\S}}\|_F} }
				\lesssim \psi^*(s, \bs{\S}).
			\end{equation}
			Combining \eqref{eq:hat-Nstar-dense-1}, \eqref{eq:hat-Nstar-dense-2}, and \eqref{eq:hat-Nstar-dense-3} we complete the proof.

			\subsubsection{Proof of parts (ii) and (iii) of Theorem~\ref{thm:unknownvar1}}
			
			Using the definition of $\hat N^*$  for $s \le \| \bs{\S} \|_F$ we obtain
			\begin{align}
				\label{eq:hat-Nstar-sparse-1}
				|\hat{N}^* - \|\bt\|_2|
				&\le \left|\sqrt{\sum_{i \in \cS} Y_i^2 \one(|Y_i| >  \hat{\s}_S \s_{i} \tau)} - \|\bt\|_2\right|
				\\
				&\quad + \s \left|\sum_{i \not\in \cS} \varepsilon_i^2 \one\left(|\varepsilon_i| > \frac{\hat \s_S}{\s}\s_{i}\tau \right) - \alpha_s \frac{\hat{\s}_S^2}{\s^2} \, \sum_{i=1}^d \s_{i}^2\right|^{1 / 2}.
			\end{align}
			Here
			\begin{align}
				\left|\sqrt{\sum_{i \in \cS} Y_i^2 \one(|Y_i| >  \hat{\s}_S \s_{i} \tau)} - \|\bt\|_2\right|
				& \le \sqrt{\sum_{i\in\cS} \pth{Y_i \one(|Y_i| >  \hat{\s}_S \s_{i} \tau) - \t_i}^2 }\\
				& = \sqrt{ \sum_{i\in\cS} \pth{ \s \varepsilon_i - Y_i\one(|Y_i| \le \hat{\s}_S \s_{i} \tau) }^2 }.
			\end{align}
			Therefore,
			\begin{align}
				\esp \qth{\left|\sqrt{\sum_{i \in \cS} Y_i^2 \one(|Y_i| >  \hat{\s}_S \s_{i} \tau)} - \|\bt\|_2\right|^2}
				&\le 2\esp \pth{ \sum_{i\in\cS} \s^2 \varepsilon_i^2  + \hat{\s}_S^2 \s_{i}^2 \tau^2 }
				\\ &= 2  \pth{\s^2+ \esp (\hat{\s}_S^2) \tau^2} \sum_{i\in\cS} \s_{i}^2.    
			\end{align}
			Note that since $\|\tilde{\bs{\S}}\|_F\le d$ it follows from Theorem~\ref{thm:hat-sigma-sparse} 
			that $\esp (\hat{\s}_S^2)\lesssim \s^2$. Moreover, Assumption~\ref{as:var0} implies that $\sum_{i\in\cS} \s_{i}^2 \le s$. It follows that
			\begin{equation}
				\label{eq:hat-Nstar-sparse-2}
				\esp \qth{\left|\sqrt{\sum_{i \in \cS} Y_i^2 \one(|Y_i| >  \hat{\s}_S \s_{i} \tau)} - \|\bt\|_2\right|^2}
				\lesssim s\s^2\tau^2
				\lesssim \s^2 \psi^*(s,\bs{\S}).
			\end{equation}

			The remaining proof is devoted to the control of the second term on the right hand side of~\eqref{eq:hat-Nstar-sparse-1}.
			The major difficulty arises from the dependence between the variance estimator $\hat\s_S$ and $\varepsilon_i$'s.
			To begin with, we note that the following holds:
			\begin{align}
				\esp\left|\sum_{i \not\in \cS} \varepsilon_i^2 \one\left(|\varepsilon_i| > \s_{i}\tau \right) - \alpha_s \frac{\hat{\s}_S^2}{\s^2} \, \sum_{i=1}^d \s_{i}^2\right|
				& \le \esp\left|\sum_{i \not\in \cS} \varepsilon_i^2 \one\left(|\varepsilon_i| > \s_{i}\tau \right) - \alpha_s  \sum_{i=1}^d \s_{i}^2\right|
				\\
				& \quad + \alpha_s  \sum_{i=1}^d \s_{i}^2\esp\abs{\frac{\hat{\s}_S^2}{\s^2}-1} \\
				& \overset{(a)}
				{\le} 
				\esp\left|\sum_{i \not\in \cS} \big(\varepsilon_i^2 \one\left(|\varepsilon_i| > \s_{i}\tau \right) - \alpha_s \s_{i}^2\big)\right|
				\\
				&\quad + C \alpha_s (s+\|\bs{\S}\|_F)
				\\
				& \overset{(b)}{\lesssim} \|\bs{\S}\|_F \, \tau^4 \exp(-\tau^2/6)
				\lesssim  \psi^*(s,\bs{\S}), 
				\label{eq:hat-Nstar-sparse-3a}
			\end{align}
			where (a) follows from Assumption~\ref{as:var0} and Theorem~\ref{thm:hat-sigma-sparse} while
			(b) is due to the second inequality in Proposition~\ref{prop:correlated-gaussian}\ref{prop:truncate-moments},  the assumption that $s\le \|\bs{\S}\|_F$, the inequality $\alpha_s\lesssim \tau^2 e^{-\tau^2/2}$ and the fact that $\tau=3\sqrt{\log (1 + \|\bs{\S}\|_F^2 / s^2)}$. 
			
			Given \eqref{eq:hat-Nstar-sparse-3a}, in order to evaluate the second term on the right hand side of~\eqref{eq:hat-Nstar-sparse-1} it remains to control the difference
			\begin{equation}
				\label{eq:eps-hat-sigma}
				\left| \sum_{i \not\in \cS} \varepsilon_i^2 \one\left(|\varepsilon_i| > \frac{\hat \s_S}{\s}\s_{i}\tau \right) - \sum_{i \not\in \cS} \varepsilon_i^2 \one\left(|\varepsilon_i| > \s_{i}\tau \right) \right|,
			\end{equation}
			where $\tau=3\sqrt{\log (1 + \|\bs{\S}\|_F^2 / s^2)}$.
			To this end, we use the following two lemmas. The first lemma provides a bound for the expression in \eqref{eq:eps-hat-sigma} in probability. The second lemma establishes a bound on its mean squared error under the additional condition that $\bs{\S}$ is positive definite.  The proofs of the lemmas will be given at the end of this subsection.
			
			\begin{lemma}\label{lem:unsconf}
				There exist absolute constants $c>0$ and $C>0$ such that,
				for any $\eta \in \big(c{(s+\|\bs{\S}\|_F)}/{d},1\big)$,
				\begin{align}
					& \prob\left(\left|\sum_{i \not\in \cS} \varepsilon_i^2  \left(\one\left(|\varepsilon_i| > \frac{\hat{\s}_S}{\s} \s_{i}   \tau \right) - \one\left(|\varepsilon_i| > \s_{i} \tau \right)\right)\right| \le C \frac{s+\|\bs{\S}\|_F}{\eta^2} \tau^3 e^{-\tau^2/4}\right)
					\\
					& \qquad \ge 1 - \eta.
				\end{align}
			\end{lemma}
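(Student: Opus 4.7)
The plan is to localize $\hat{\s}_S/\s$ to a deterministic neighborhood of $1$ via Markov's inequality applied to Theorem~\ref{thm:hat-sigma-sparse}, and then to bound the residual sum by replacing the random threshold inside the indicators with a deterministic enveloping ``transition band'' around $\s_i\tau$, on which Gaussian tail estimates can be used.

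\textbf{Localization.} Under Assumptions~\ref{as:var0} and~\ref{as:var1} we have $\|\tilde{\bs{\S}}\|_F \asymp \|\bs{\S}\|_F$, so Theorem~\ref{thm:hat-sigma-sparse} gives $\esp[|\hat{\s}_S^2/\s^2 - 1|] \lesssim (s + \|\bs{\S}\|_F)/d$. Set $\delta := C'(s+\|\bs{\S}\|_F)/(\eta d)$ with $C'$ chosen large enough so that, by Markov's inequality, $\prob(|\hat{\s}_S^2/\s^2 - 1| \le \delta) \ge 1 - \eta/2$. Taking the constant $c$ in the assumption $\eta > c(s+\|\bs{\S}\|_F)/d$ sufficiently large guarantees $\delta \le 1/2$, and therefore $\hat{\s}_S/\s \in [1-\delta, 1+\delta]$ on this event (absorbing a harmless factor of~$2$ into $\delta$).

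\textbf{Reduction and bound.} On the localizing event, the indicator difference is supported on the band $\{|\varepsilon_i|/\s_i \in [\tau(1-\delta), \tau(1+\delta)]\}$, so
\[
\left|\sum_{i\notin\cS} \varepsilon_i^2\Big(\one(|\varepsilon_i|>(\hat{\s}_S/\s)\s_i\tau)-\one(|\varepsilon_i|>\s_i\tau)\Big)\right| \le W,
\]
where $W := \sum_{i=1}^d \varepsilon_i^2 \one(|\varepsilon_i|/\s_i \in [\tau(1-\delta), \tau(1+\delta)])$ depends on $\hat{\s}_S$ only through the deterministic scalar $\delta$. Since $\varepsilon_i/\s_i\sim \calN(0,1)$, a direct Gaussian tail computation (together with $\delta\le 1/2$) yields $\esp[\varepsilon_i^2\one(|\varepsilon_i|/\s_i \in [\tau(1-\delta), \tau(1+\delta)])] \lesssim \s_i^2\tau^3\delta e^{-\tau^2/4}$. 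Invoking $\sum_i \s_i^2 \le d$ from Assumption~\ref{as:var0} and substituting the value of $\delta$, we obtain $\esp W \lesssim (s+\|\bs{\S}\|_F)\tau^3 e^{-\tau^2/4}/\eta$. A second Markov step then bounds $W$ by $(2/\eta)\esp W$ with probability at least $1-\eta/2$, and a union bound yields the claim.

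\textbf{Main obstacle.} The principal difficulty is the coupling between $\hat{\s}_S$ and the $\varepsilon_i$'s appearing inside the indicators, which rules out a direct independence argument. The strategy above circumvents this by localizing $\hat{\s}_S$ first and then majorizing by a deterministic band indicator; the price is a factor $1/\eta$ at each of the two Markov steps, which is exactly what produces the $1/\eta^2$ in the stated bound. Obtaining only a single $1/\eta$ would seem to require a genuinely joint control of $\hat{\s}_S$ and the band sum that our simple argument deliberately avoids.
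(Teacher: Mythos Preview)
Your proof is correct and follows essentially the same approach as the paper's: localize $\hat{\s}_S^2/\s^2$ to a $\delta$-neighborhood of $1$ via Markov's inequality and Theorem~\ref{thm:hat-sigma-sparse}, replace the random indicator difference by a deterministic band indicator around $\s_i\tau$, bound the expectation of the band sum by a Gaussian tail calculation giving $\s_i^2\delta\tau^3 e^{-\tau^2/4}$ per term, and apply Markov again. The only cosmetic differences are that the paper uses the band $[\tau\sqrt{1-\delta},\tau\sqrt{1+\delta}]$ (coming directly from $|\hat{\s}_S^2/\s^2-1|\le\delta$) rather than your $[\tau(1-\delta),\tau(1+\delta)]$, and sums over $i\notin\cS$ rather than all $i$; neither affects the argument.
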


			\begin{lemma}
				\label{lem:squad}
				If $\bs{\S}$ is positive definite there exists an absolute constant $c>0$ such that,
				for any $\tau \ge 1$,
				\begin{align}
					&\esp \left[\varepsilon_i^2 \abs{ \one\pth{|\varepsilon_i|>\frac{\hat \s_S}{\s}\s_i\tau }  - \one(|\varepsilon_i|>\s_i \tau) }\right]\\
					\le~&   c  \tau^2\pth{1+\sqrt{(\bs{\S}^{-1})_{ii}}}\frac{s+\|\bs{\S}\|_F}{d}e^{-\frac{\tau^2}{16}}
					+  c\tau^2\pth{\frac{s+\|\bs{\S}\|_F}{d}}^2.
				\end{align}
			\end{lemma}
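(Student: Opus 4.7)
The starting point is the pointwise inequality
\[
\Big|\one(|\varepsilon_i|>\tfrac{\hat\s_S}{\s}\s_i\tau) - \one(|\varepsilon_i|>\s_i\tau)\Big|
\le \one\bigl(\big||\varepsilon_i|-\s_i\tau\big|\le\tilde\delta\,\s_i\tau\bigr),
\]
where $\tilde\delta := |\hat\s_S/\s-1|$, since the indicators differ only when $|\varepsilon_i|$ lies between the two thresholds $\s_i\tau$ and $\tfrac{\hat\s_S}{\s}\s_i\tau$. Multiplying by $\varepsilon_i^2$ and partitioning according to whether $\tilde\delta \le 1/4$ or $\tilde\delta > 1/4$ splits the target expectation into a ``central'' part $A_i^{(1)}$ and a ``tail'' part $A_i^{(2)}$.

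For $A_i^{(2)}$ I would further split into $\{1/4<\tilde\delta\le 1/2\}$ and $\{\tilde\delta>1/2\}$. On the former, Markov's inequality combined with Theorem~\ref{thm:hat-sigma-sparse} yields $\prob(\tilde\delta>1/4)\lesssim (s+\|\bs{\S}\|_F)/d$, and since on the indicator set $\varepsilon_i^2\le(5\s_i\tau/4)^2\lesssim \tau^2$, Cauchy--Schwarz with $\esp[\varepsilon_i^4]\lesssim 1$ supplies the desired $\tau^2\bigl(\tfrac{s+\|\bs{\S}\|_F}{d}\bigr)^2$ contribution. On $\{\tilde\delta>1/2\}$ I would invoke Lemma~\ref{lmm:hat-sigma-whp}, giving $\prob(\tilde\delta>1/2)\lesssim \|\tilde{\bs{\S}}\|_F^2/d^2$; another application of Cauchy--Schwarz then produces a contribution of order $\|\bs{\S}\|_F/d$, which is dominated by the right-hand side of the claim.

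The heart of the argument is $A_i^{(1)}$. On $\{\tilde\delta\le 1/4\}$ the indicator forces $|\varepsilon_i|\in[3\s_i\tau/4,\,5\s_i\tau/4]$, so $\varepsilon_i^2\lesssim \tau^2$ and it suffices to bound $\prob\bigl(\big||\varepsilon_i|-\s_i\tau\big|\le\tilde\delta\s_i\tau,\,\tilde\delta\le 1/4\bigr)$. Here I would condition on $\varepsilon_{-i}$ and use positive definiteness of $\bs{\S}$: by the Schur complement, $\mathrm{Var}(\varepsilon_i\mid\varepsilon_{-i})=1/(\bs{\S}^{-1})_{ii}$, hence the conditional density of $\varepsilon_i$ is pointwise bounded by $\sqrt{(\bs{\S}^{-1})_{ii}/(2\pi)}$. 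Multiplying this uniform density bound by the band width $2\tilde\delta\s_i\tau$ controls the conditional probability, and restricting the integration in $\varepsilon_i$ to a neighborhood of $\pm\s_i\tau$ supplies the Gaussian tail factor $e^{-\tau^2/16}$ (the constant $1/16$ absorbing slack from $\s_i\le 1$ and the $[3/4,5/4]$ band). Taking the outer expectation and applying Theorem~\ref{thm:hat-sigma-sparse} to $\esp[\tilde\delta]$ contributes the factor $(s+\|\bs{\S}\|_F)/d$, yielding the first term of the claim.

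The main obstacle is that $\hat\s_S$ depends on $\varepsilon_i$, so conditioning on $\varepsilon_{-i}$ does not fix $\tilde\delta$. My plan is to exploit the structure of the estimator: given $\varepsilon_{-i}$, both $\hat t$ (chosen on a dyadic grid) and $\hat F_{Y^2}(\hat t)$ depend on $\varepsilon_i$ only through the finitely many crossings $Y_i^2/\s_i^2=2^{\ell}$, so $\hat\s_S$ is piecewise constant in $\varepsilon_i$ with a logarithmic number of pieces. This permits bounding the conditional probability of the band event by a sum over the pieces, each controlled by the conditional-density estimate times the corresponding band width; summing and taking the outer expectation recovers $\esp[\tilde\delta]$ and completes the argument.
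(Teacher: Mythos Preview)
Your overall decomposition is in the right spirit, but two of the key steps do not go through as written.

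First, the decoupling of $\hat\s_S$ from $\varepsilon_i$. The ``piecewise constant with a logarithmic number of pieces'' idea is not made precise, and even if it were, summing the conditional band probability over the pieces would produce $\sum_k\delta_k$ (the sum of the possible values of $\tilde\delta$ over pieces), not $\esp[\tilde\delta]$; controlling this sum would still require comparing each piece to something measurable in $\beps_{-i}$. The paper instead restricts first to $\{\hat t\in\calT\}$ (a deterministic set of at most three dyadic points, by Lemma~\ref{lmm:hat-t-property}), and for each fixed $t\in\calT$ replaces $\hat\s_S^2(t)$ by the \emph{leave-one-out} estimator $\hat\s_{-i}^2(t)=t/F^{-1}(\hat F_{-i}(t))$ built from $\{Y_j:j\ne i\}$. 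This estimator is $\beps_{-i}$-measurable, and because dropping one observation shifts the empirical cdf by at most $1/d$, one has $|\hat\s_S^2(t)-\hat\s_{-i}^2(t)|/\s^2\le c/d$ on the good event. The band $[\hat\s_S(t)/\s\wedge 1,\ \hat\s_S(t)/\s\vee 1]$ for $|\varepsilon_i|/(\s_i\tau)$ is thereby enlarged to a band determined by $\delta_{-i}(t)+c/d$, after which conditioning on $\beps_{-i}$ is legitimate.

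Second, the Gaussian factor $e^{-\tau^2/16}$ does not follow from the uniform density bound. Multiplying $\sqrt{(\bs{\S}^{-1})_{ii}/(2\pi)}$ by the band width yields the $\sqrt{(\bs{\S}^{-1})_{ii}}$ factor but no exponential decay in $\tau$. The conditional law of $\varepsilon_i/\s_i$ given $\beps_{-i}$ is $\calN(\mu_i(\beps_{-i}),\i_i^2)$ with $\i_i^2=1/(\s_i^2(\bs{\S}^{-1})_{ii})$ and $\mu_i(\beps_{-i})\sim\calN(0,1-\i_i^2)$; the conditional density near $\pm\tau$ is small only when $|\mu_i|$ is well below $\tau$. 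The paper therefore introduces the event $E_2=\{|\mu_i(\beps_{-i})|\le\tau/2\}$: on $E_2$ the conditional band probability carries $\exp(-c\tau^2/\i_i^2)\le\exp(-c\tau^2)$, while $\prob[E_2^c]\le e^{-\tau^2/8}$ is handled via Cauchy--Schwarz against a second-moment bound on the (leave-one-out) band width. Your sketch omits this split entirely. As a smaller point, in your tail part Markov on Theorem~\ref{thm:hat-sigma-sparse} gives only $\prob(\tilde\delta>1/4)\lesssim(s+\|\bs{\S}\|_F)/d$, which together with $\varepsilon_i^2\lesssim\tau^2$ yields $\tau^2(s+\|\bs{\S}\|_F)/d$, not the squared term; the $\bigl((s+\|\bs{\S}\|_F)/d\bigr)^2$ contribution comes instead from the Chebyshev-type Lemma~\ref{lmm:sigma-t-prob} applied at each fixed $t\in\calT$ (cf.\ the event $E_1$ in Lemma~\ref{lem:squad1}).
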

			Since $s\le \|\bs{\S}\|_F$ and $\tau=3\sqrt{\log (1 + \|\bs{\S}\|_F^2 / s^2)}$ we obtain that
			\[
			(s+\|\bs{\S}\|_F) \tau^3 e^{-\tau^2/4} \lesssim s \log (1 + \|\bs{\S}\|_F^2 / s^2) =\psi^*(s,\bs{\S}).
			\]
			Applying Markov's inequality to the random variable under the expectation in (\ref{eq:hat-Nstar-sparse-3a}) and using Lemma~\ref{lem:unsconf}   we obtain that 
			for any $\eta> c {\|\bs{\S}\|_F}/{d}$ there exists a constant $c_\eta$ such that
			\begin{equation}
				\label{eq:hat-Nstar-sparse-4}
				\prob\pth{ \left|\sum_{i \not\in \cS} \varepsilon_i^2 \one\left(|\varepsilon_i| > \frac{\hat \s_S}{\s}\s_{i}\tau \right) - \alpha_s \frac{\hat{\s}_S^2}{\s^2}  \sum_{i=1}^d \s_{i}^2\right| \le \frac{c_\eta}{4} \psi^*(s,\bs{\S}) }
				\ge 1-\frac{\eta}{2}.
			\end{equation}
			Applying now Markov's inequality to the random variable under the expectation in~\eqref{eq:hat-Nstar-sparse-2} and invoking~\eqref{eq:hat-Nstar-sparse-1} and~\eqref{eq:hat-Nstar-sparse-4} we complete the proof of \eqref{eq:unknownvar1-sparse1}.

			We now prove~\eqref{eq:unknownvar1-sparse2}. Under the condition that $\bs{\S}$ is positive definite, we apply Lemma~\ref{lem:squad} with $s\le \|\bs{\S}\|_F$ and $\tau=3\sqrt{\log (1 + \|\bs{\S}\|_F^2 / s^2)}$. This yields:
			\begin{align}
				& \esp\left|\sum_{i \not\in \cS} \varepsilon_i^2 \left(\one\left(|\varepsilon_i| > \frac{\hat{\s}_S}{\s} \s_{i} \tau \right) - \one\left(|\varepsilon_i| > \s_{i} \tau\right)\right)\right| \\
				&\quad  \le  
				\sum_{i \not\in \cS} \esp \left(\varepsilon_i^2 \abs{\one\left(|\varepsilon_i| > \frac{\hat{\s}_S}{\s} \s_{i} \tau \right) - \one\left(|\varepsilon_i| > \s_{i} \tau\right) }\right) 
				\\
				& \quad \lesssim \frac{1}{d}\sum_{i=1}^d \left(\sqrt{(\bs{\S}^{-1})_{ii}} + 1\right) \|\bs{\S}\|_F \frac{\log(1 + \|\bs{\S}\|_F^2 / s^2)}{(1 + \|\bs{\S}\|_F^2 / s^2)^{9/16}}  + \frac{\|\bs{\S}\|_F^2}{d} \log(1 + \|\bs{\S}\|_F^2 / s^2)
				\\
				& \quad \lesssim \frac{s}{d}\sum_{i=1}^d \left(\sqrt{(\bs{\S}^{-1})_{ii}} + 1\right)   + \frac{\|\bs{\S}\|_F^2}{d} \log(1 + \|\bs{\S}\|_F^2 / s^2).
			\end{align}
			Notice that $(\bs{\S}^{-1})_{ii}\le \lambda_{\max}(\bs{\S}^{-1})$. Hence,
			\begin{align}
				&\esp\left|\sum_{i \not\in \cS} \varepsilon_i^2 \left(\one\left(|\varepsilon_i| > \frac{\hat{\s}_S}{\s} \s_{i} \tau \right) - \one\left(|\varepsilon_i| > \s_{i} \tau\right)\right)\right| \\
				&\qquad\qquad \lesssim s \left(\sqrt{\lambda_{\max}(\bs{\S}^{-1})} + 1\right)  + \frac{\|\bs{\S}\|_F^2}{d} \log(1 + \|\bs{\S}\|_F^2 / s^2).
			\end{align}
			Combining this bound with \eqref{eq:hat-Nstar-sparse-1}, \eqref{eq:hat-Nstar-sparse-2} and \eqref{eq:hat-Nstar-sparse-3a} concludes the proof of~\eqref{eq:unknownvar1-sparse2}.

			
			\begin{proof}[Proof of Lemma~\ref{lem:unsconf}]
				Given any $\eta$ such that $\frac{c(s+\|\bs{\S}\|_F)}{d} < \eta <1$, define a random event
				\[
				\mathcal{A} := \left\{\frac{|\hat{\s}_S^2-\s^2|}{\s^2} \le \delta := \frac{c(s+\|\bs{\S}\|_F)}{2\eta d}\right\},
				\]
				where $c > 0$ is a constant to be chosen.
				Applying the Markov inequality to Theorem~\ref{thm:hat-sigma-sparse} yields that by choosing $c$ sufficiently large, $\prob(\mathcal{A}^c) \le \eta / 2$.
				Note that $\delta \le \frac{1}{2}$.
				For any $\tau > 0$,
				\begin{align}
					& \esp \left(  \varepsilon_i^2 \left| \one\left(|\varepsilon_i| > \frac{\hat{\s}_S}{\s} \s_{i} \tau \right) - \one\left(|\varepsilon_i| > \s_{i} \tau\right)  \right|  \one(\mathcal{A}) \right) \\
					\le~& \esp \left(  \varepsilon_i^2  \one\left( \tau\sqrt{1-\delta}   \le \frac{|\varepsilon_i|}{\s_{i}} \le \tau \sqrt{1+\delta}  \right)  \one(\mathcal{A}) \right) \\
					\le~& \s_i^2\esp \left(  \frac{\varepsilon_i^2}{\s_i^2}  \one\left( \tau \sqrt{1-\delta}   \le \frac{|\varepsilon_i|}{\s_{i}} \le \tau \sqrt{1+\delta}  \right)   \right) \\
					\lesssim~& \s_i^2 \delta \tau^3 e^{-\tau^2/4},
				\end{align}
				where in the last step we have used the fact that, for $b>a>0$,
				\[
				\int_a^b x^2 e^{-x^2/2}dx \le  b^2 e^{-a^2/2} (b-a).
				\]
				Consequently, since $\s_i^2\le 1$ by Assumption~\ref{as:var0}, we have
				\[
				\esp\left(\Big|\sum_{i \not\in \cS} \varepsilon_i^2  \left(\one\left(|\varepsilon_i| > \frac{\hat{\s}_S}{\s} \s_{i}   \tau \right) - \one\left(|\varepsilon_i| > \s_{i} \tau \right)\right)\Big|\one(\mathcal{A})\right)
				\lesssim \frac{s+\|\bs{\S}\|_F}{\eta } \tau^3 e^{-\tau^2/4}.
				\]
				Applying the Markov inequality yields that
				\[
				\prob\qth{\sth{ \left|\sum_{i \not\in \cS} \varepsilon_i^2  \left(\one\left(|\varepsilon_i| > \frac{\hat{\s}_S}{\s} \s_{i}   \tau \right) - \one\left(|\varepsilon_i| > \s_{i} \tau \right)\right)\right|\ge \frac{C(s+\|\bs{\S}\|_F)}{\eta^2 } \tau^3 e^{-\tau^2/4} }
					\cap \mathcal{A}}
				\le \eta/2,
				\]
				where $C$ is a constant.
				In light of the above and that $\prob(\mathcal{A}^c) \le \eta / 2$, we obtain the desired result.
			\end{proof}
			
			
			The rest of this subsection is devoted to the proof of Lemma \ref{lem:squad}, where
			$\bs{\S}$ is assumed positive definite. The idea of the proof is to use a leave-one-out argument.
			For $i\in[d]$, set $\bs{\varepsilon}_{-i}=(\varepsilon_1,\dots,\varepsilon_{i-1},\varepsilon_{i+1},\dots,\varepsilon_{d})$ and $\bs{Y_{-i}}=(Y_1,\dots,Y_{i-1},Y_{i+1},\dots,Y_d)$.
			Since only one observation is dropped, the corresponding variance estimator using $\bs{Y_{-i}}$ is close to the original $\hat \s_S^2$ with high probability.
			On the other hand, since $\bs{\S}$ is positive definite, all its principal sub-matrices are positive definite, the conditional distribution of $\varepsilon_i$ given $\bs{\varepsilon}_{-i}$ is non-degenerate and has the form (see, e.g.,~\citep{E83})
			\begin{equation}
				\label{eq:Gauss-cond}
				\varepsilon_i|\bs{\varepsilon}_{-i} \sim \calN(\cov(\varepsilon_i, \bs{\varepsilon}_{-i})\cov(\bs{\varepsilon}_{-i})^{-1}\bs{\varepsilon}_{-i}, \s_i^2-\cov(\varepsilon_i, \bs{\varepsilon}_{-i})\cov(\bs{\varepsilon}_{-i})^{-1}\cov(\bs{\varepsilon}_{-i},\varepsilon_i)),
			\end{equation}
			where, by the Schur complement (see, e.g., \cite[(0.7.3.1)]{HJ12}),
			\begin{equation}
				\label{eq:Schur}
				\s_i^2-\cov(\varepsilon_i, \bs{\varepsilon}_{-i})\cov(\bs{\varepsilon}_{-i})^{-1}\cov(\bs{\varepsilon}_{-i},\varepsilon_i) = \frac{1}{(\bs{\S}^{-1})_{ii}} \in [0,\s_i^2].
			\end{equation}
			We will use these remarks to derive the following preliminary lemma.
			
			\begin{lemma}\label{lem:squad1}
				Let
				$t\asymp \sigma^2$.
				Then for any $\tau > 0$,
				\[
				\prob\qth{\frac{\hat{\s}_{S}(t)}{\s} \wedge 1 \le \frac{|\varepsilon_i|}{\s_i \tau}\le  \frac{\hat{\s}_{S}(t)}{\s} \vee 1}
				\lesssim \sqrt{(\bs{\S}^{-1})_{ii}}\frac{s+\|\bs{\S}\|_F}{d}e^{-\frac{\tau^2}{16}} + \pth{\frac{s+\|\bs{\S}\|_F}{d}}^2.
				\]
			\end{lemma}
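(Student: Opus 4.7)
The plan is a leave-one-out decoupling that separates $\varepsilon_i$ from the estimator $\hat\s_S(t)$. Let $\hat F_{Y^2}^{(-i)}$ denote the empirical cdf of $\{\tilde Y_j^2 : j\in[d]\setminus\{i\}\}$ and set $\hat\s_{S,-i}^2(t) := t/F^{-1}(\hat F_{Y^2}^{(-i)}(t))$, a function of $\bs{\varepsilon}_{-i}$ alone. Since $\|\hat F_{Y^2} - \hat F_{Y^2}^{(-i)}\|_\infty \le 1/d$ and $F^{-1}$ is locally Lipschitz near $F(t/\s^2)$, bounded away from $0$ and $1$ because $t\asymp\s^2$, one should obtain $|\hat\s_S^2(t)/\s^2 - \hat\s_{S,-i}^2(t)/\s^2| \lesssim 1/d$ on an event whose complement has probability $\lesssim \|\bs{\S}\|_F^2/d^2$ via Lemma \ref{lmm:sigma-t-prob}(ii) applied to $\hat\s_{S,-i}$.

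Next I would condition on $\bs{\varepsilon}_{-i}$. By \eqref{eq:Gauss-cond}--\eqref{eq:Schur} and Assumption \ref{as:var1}, the normalized variable $\tilde\varepsilon_i := \varepsilon_i/\s_i$ is then Gaussian with mean $\tilde\mu_i$ (a centered Gaussian functional of $\bs{\varepsilon}_{-i}$ with total variance at most $1$) and conditional variance $\tilde v_i^2 \asymp 1/(\bs{\S}^{-1})_{ii}$. Writing $\delta_{-i} := |\hat\s_{S,-i}^2(t)/\s^2 - 1|$, on the event $\{\delta_{-i} \le 1/2\}$ the event of interest is contained in $\{|\tilde\varepsilon_i| \in \tau J\}$ for an interval $J$ near $1$ of width $\lesssim \delta_{-i} + 1/d$. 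I would then split on $|\tilde\mu_i| \le \tau/2$ versus the opposite. In the first subcase the conditional Gaussian density at the relevant points is $\lesssim \sqrt{(\bs{\S}^{-1})_{ii}}\, e^{-\tau^2/8}$ (using $\tilde v_i^2 \le 1$), giving a conditional probability $\lesssim \sqrt{(\bs{\S}^{-1})_{ii}}\, \tau(\delta_{-i}+1/d)\, e^{-\tau^2/8}$; in the second subcase a Gaussian tail bound gives $P(|\tilde\mu_i| > \tau/2) \lesssim e^{-\tau^2/8}$, which I would combine with the width bound by Cauchy--Schwarz.

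Taking expectation over $\bs{\varepsilon}_{-i}$ and using the elementary fact $\tau e^{-\tau^2/8} \lesssim e^{-\tau^2/16}$, the first-subcase contribution integrates to $\sqrt{(\bs{\S}^{-1})_{ii}}\, (s+\|\bs{\S}\|_F)/d \cdot e^{-\tau^2/16}$ once we insert the bound $\esp[\delta_{-i}] \lesssim (s+\|\bs{\S}\|_F)/d$, which follows by integrating the tail estimate in Lemma \ref{lmm:sigma-t-prob}(i) for $t\asymp\s^2$ and truncating at $\delta_{-i}=1/2$. The Cauchy--Schwarz bookkeeping for the second subcase gives the same rate provided $\esp[\delta_{-i}^2\,\one(\delta_{-i}\le 1/2)] \lesssim ((s+\|\bs{\S}\|_F)/d)^2$, again from Lemma \ref{lmm:sigma-t-prob}(i). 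Finally, the exceptional events $\{\delta_{-i} > 1/2\}$ and the failure of the leave-one-out coupling together contribute at most $((s+\|\bs{\S}\|_F)/d)^2$ by Lemma \ref{lmm:sigma-t-prob}(ii), yielding the second summand of the bound.

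The main obstacle is the leave-one-out coupling itself: one must quantify $|\hat\s_S^2(t) - \hat\s_{S,-i}^2(t)|$ uniformly on a sufficiently good event, which requires controlling $\hat F_{Y^2}$ not only at the point $t$ (as in Lemma \ref{lmm:sigma-t-prob}) but in a small neighborhood of $t$, so that propagating through $F^{-1}$ yields a deterministic inclusion of the original event into a $\bs{\varepsilon}_{-i}$-measurable superset without losing the rate. Once this inclusion is established, the subsequent conditional Gaussian small-ball and tail analysis is routine, and the only remaining bookkeeping is the absorption of polynomial factors in $\tau$ into the exponential $e^{-\tau^2/16}$.
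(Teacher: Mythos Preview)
Your plan is essentially the paper's own argument: leave-one-out to decouple $\varepsilon_i$ from the estimator, a $1/d$-coupling between $\hat\sigma_S^2(t)$ and $\hat\sigma_{S,-i}^2(t)$ on a good event, then a conditional Gaussian small-ball estimate with a split on $\{|\tilde\mu_i|\le\tau/2\}$ handled by Cauchy--Schwarz. Two small corrections: (i) the ``obstacle'' you flag is not there---no neighborhood control of $\hat F_{Y^2}$ is needed, since on the event $\{\delta_{-i}\le c_0\}$ the value $\hat F_{-i}(t)$ lies in a fixed compact subset of $(0,1)$, and $|\hat F_{Y^2}(t)-\hat F_{-i}(t)|\le 1/d$ together with the mean value theorem for $t/F^{-1}(\cdot)$ already gives the coupling; (ii) the bound $\esp[\delta_{-i}^2\,\one(\delta_{-i}\le 1/2)]\lesssim((s+\|\bs{\S}\|_F)/d)^2$ does \emph{not} follow by integrating the $1/u^2$ tail of Lemma~\ref{lmm:sigma-t-prob}(i) (that produces a spurious logarithm), but it does follow directly from the variance bound on the empirical cdf in Proposition~\ref{prop:correlated-gaussian}\ref{prop:cdf} combined with~\eqref{eq:FY-Feps}, since on the good event $\delta_{-i}\lesssim |\hat F_{-i}(t)-F(t/\sigma^2)|$.
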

			\begin{proof}
				Let $\hat F_{-i}(t)$ denote the empirical cumulative distribution function of $\{Y_j^2 / \s_{j}^2:j\ne i\}$.
				Similar to the estimator $\hat\s_S^2$ in \eqref{eq:hat-sigma-t-sparse}, define the corresponding estimator and its relative error as
				\[
				\hat{\s}_{-i}^2(t) := \frac{t}{F^{-1}(\hat F_{-i}(t))},
				\qquad
				\d_{-i}(t):= \abs{\frac{\hat{\s}_{-i}^2(t)}{\s^2}-1},
				\]
				respectively.
				Let $E_1:=\{ \d_{-i}(t) \le 1/100\}$.
				It follows from \Cref{lmm:sigma-t-prob} that $\prob[E_1^c]\lesssim (\frac{s+\|\bs{\S}\|_F}{d})^2$ since $t\asymp \sigma^2$.
				We get
				\begin{equation}
					\label{eq:squad-ub-1}
                    \begin{aligned}
					& \prob\qth{ \frac{\hat{\s}_{S}(t)}{\s} \wedge 1 \le \frac{|\varepsilon_i|}{\s_i \tau}\le  \frac{\hat{\s}_{S}(t)}{\s} \vee 1  } \\
					&\qquad \lesssim  \prob\qth{ \frac{\hat{\s}_{S}(t)}{\s} \wedge 1 \le \frac{|\varepsilon_i|}{\s_i \tau}\le  \frac{\hat{\s}_{S}(t)}{\s} \vee 1 , E  } + \pth{\frac{s+\|\bs{\S}\|_F}{d}}^2.
                    \end{aligned}
				\end{equation}
				Additionally, we get under event $E_1$ that $\hat F_{-i}(t)$ is strictly bounded away from zero and from one.
				Using the fact that $|\hat F_{-i}(t)-\hat F(t)|\le \frac{1}{d}$ and the mean value theorem yields that, under the event~$E_1$,
				\[
				\abs{\frac{\hat{\s}_{-i}^2(t)}{\s^2} - \frac{\hat{\s}_{S}^2(t)}{\s^2}}
				\le c/d,
				\]
				for an absolute constant $c>0$. Without loss of generality, we assume that $d$ is sufficiently large such that $c / d \le 0.01$.
				Then,
				\begin{align}
					&\prob\qth{ \frac{\hat{\s}_{S}^2(t)}{\s^2} \wedge 1 \le \frac{\varepsilon_i^2}{\s_i^2 \tau^2}\le  \frac{\hat{\s}_{S}^2(t)}{\s^2} \vee 1 , E_1}\\
					\le~& \prob\qth{ \pth{\frac{\hat{\s}_{-i}^2(t)}{\s^2} - \frac{c}{d}} \wedge 1 \le \frac{\varepsilon_i^2}{\s_i^2 \tau^2}\le \pth{\frac{\hat{\s}_{-i}^2(t)}{\s^2} + \frac{c}{d}} \vee 1  , E_1} \\
					\le~& \prob\qth{ 1 - \delta_{-i}(t) - \frac{c}{d} \le \frac{\varepsilon_i^2}{\s_i^2 \tau^2}\le 1 + \delta_{-i}(t) + \frac{c}{d} , E_1}. \label{eq:squad-ub-2}
				\end{align}
				Since $\delta_{-i}(t)$ is measurable with respect to $\bs{\varepsilon}_{-i}$ we can use the conditional distribution \eqref{eq:Gauss-cond} to obtain an upper bound on the expression in~\eqref{eq:squad-ub-2}.
				Specifically, the conditional distribution of $\frac{\varepsilon_i}{\s_{i}}$ given $\bs{\varepsilon}_{-i}$ is $\calN(\mu_i(\beps_{-i}),\i_i^2)$, where
				\[
				\i_i^2=\frac{1}{\s_i^2 (\bs{\S}^{-1})_{ii}}\in [0,1],\quad \mu_i(\beps_{-i})\sim \calN(0,1-\i_i^2).
				\]
				Let $l:= \tau \sqrt{1-\d_{-i}(t)-c/d}$, $u:=\tau\sqrt{1+\d_{-i}(t)+c/d}$.
				The length of the interval $[l, u]$ is at most $L=2(\d_{-i}(t)+c/d)$.
				Then we get
				\begin{align}
					\prob\qth{|\varepsilon_i| / \s_{i}\in [l, u], E_1 }
					\le~& \esp\qth{\one(E_1) \sqrt{\frac{2}{\pi}}\frac{L}{\i_i} \exp\pth{- \frac{(l - |\mu_i(\beps_{-i})|)_+^2}{2\i_i^2}  }  },\label{eq:squad-ub-3}
				\end{align}
				where the last inequality uses the fact that, for $0\le a\le b$ and $X\sim \calN(\mu, \s^2)$,
				\begin{align}
					\prob\qth{ |X| \in [a,b] }
					& = \int \frac{1}{\sqrt{2\pi} \s}e^{-\frac{(x-\mu)^2}{2\s^2}}\one(a\le |x|\le b) d x \\
					& \le 2 \frac{b-a}{\sqrt{2\pi} \s} \exp\pth{-\frac{1}{2\s^2} \pth{\min_{x:a\le|x|\le b}|x-\mu| }^2 }\\
					& \le \sqrt{\frac{2}{\pi}} \frac{b-a}{\s}\exp\pth{-\frac{(a-|\mu|)_+^2}{2\s^2}}.
				\end{align}
				It remains to bound from above the right-hand side of~\eqref{eq:squad-ub-3}.
				Define $E_2=\{ |\mu_i(\beps_{-i})| \le \tau/2 \}$. Then,
				$$\prob[E_2^c] \le \exp\pth{-\frac{(\tau/2)^2}{2(1-\i_i^2)}  } \le e^{-\tau^2/8}.$$
				Under the events $E_1$ and $E_2$ we have $l - |\mu_i(\beps_{-i})| \ge 0.48 \tau$.
				Therefore,
				\begin{align}
					& \phantom{{}={}} \esp\qth{\one(E_1) L \exp\pth{- \frac{(l - |\mu_i(\beps_{-i})|)_+^2}{2\i_i^2}  }  } \\
					& \le \esp\qth{\one(E_1) \one(E_2) L \exp\pth{- \frac{(l - |\mu_i(\beps_{-i})|)_+^2}{2}  }  }
					+ \esp\qth{\one(E_1) \one(E_2^c) L   } \\
					& \le \esp\qth{  L \one(E_1) } \exp\pth{ -\tau^2/9 }
					+ \sqrt{\prob[E_2^c] \esp\qth{ L^2 \one(E_1) }}.
				\end{align}
				Recall that $L\le \delta_{-i}(t) + \frac{c}{d}$.
				By an  argument similar to \eqref{eq:ub-mse} we obtain that
				\[
				\esp\qth{ L^2 \one(E_1) }
				\lesssim \pth{\frac{s+\|\bs{\S}\|_F}{d}}^2.
				\]
				Using the above inequalities in~\eqref{eq:squad-ub-3} we get the following upper bound on the probability in~\eqref{eq:squad-ub-2}:
				\[
				\prob\qth{ \frac{\hat{\s}_{S}^2(t)}{\s^2} \wedge 1 \le \frac{\varepsilon_i^2}{\s_i^2 \tau^2}\le  \frac{\hat{\s}_{S}^2(t)}{\s^2} \vee 1 , E_1}
				\lesssim \s_i \sqrt{(\bs{\S}^{-1})_{ii}}\frac{s+\|\bs{\S}\|_F}{d}\exp\pth{-\frac{\tau^2}{16}}.
				\]
				The conclusion follows from this inequality and~\eqref{eq:squad-ub-1}.
			\end{proof}

			Equipped with Lemma~\ref{lem:squad1}, we are now in a position to prove Lemma~\ref{lem:squad}.
			\begin{proof}[Proof of Lemma~\ref{lem:squad}]
				Define the following two events:
				\[
				{\rm E}_1 = \{\hat t\in \calT\}, \quad  {\rm E}_2 = \sth{\hat\s_S^2\le \frac{3}{2}\s^2},
				\]
				where $\prob[ {\rm E}_1^c] \lesssim {\|\bs{\S}\|_F^2}/{d^2}$ and $ \prob[ {\rm E}_2^c] \lesssim {\|\bs{\S}\|_F^2}/{d^2}$ according to Lemmas~\ref{lmm:hat-t-property} and~\ref{lmm:hat-sigma-whp}, respectively.
				Set $ {\rm E}= {\rm E}_1\cap  {\rm E}_2$.
				We have
				\begin{align}
					& \esp \left(\varepsilon_i^2 \abs{ \one\pth{|\varepsilon_i|>\frac{\hat \s_S}{\s}\s_i\tau }  - \one(|\varepsilon_i|>\s_i \tau) }\right) \le
					\esp \left( \varepsilon_i^2  \one\pth{ \frac{\hat \s_S}{\s}\wedge 1 \le \frac{|\varepsilon_i|}{\s_i\tau} \le \frac{\hat \s_S}{\s} \vee 1}\right)\\
					&\le
					\frac{3}{2}\s_i^2\tau^2 \esp \left(\one\pth{ \frac{\hat \s_S}{\s}\wedge 1 \le \frac{|\varepsilon_i|}{\s_i\tau} \le \frac{\hat \s_S}{\s} \vee 1}\one( {\rm E})\right)
					\\
					&\quad + \esp  \left(\varepsilon_i^2  \one\pth{ \frac{\hat \s_S}{\s}\wedge 1 \le \frac{|\varepsilon_i|}{\s_i\tau} \le \frac{\hat \s_S}{\s} \vee 1} \one( {\rm E}^c)\right).
					\label{eq:invertible-mse-1}
				\end{align}
				Applying Lemma~\ref{lem:squad1} we get 
				\begin{align}
					& \s_i^2\tau^2\esp \left(\one\pth{ \frac{\hat \s_S}{\s}\wedge 1 \le \frac{|\varepsilon_i|}{\s_i\tau} \le \frac{\hat \s_S}{\s} \vee 1}\one({\rm E})\right) \\
					\le~& 
					\s_i^2\tau^2\sum_{t\in \calT}\prob\qth{ \frac{\hat \s_S}{\s}\wedge 1 \le \frac{|\varepsilon_i|}{\s_i\tau} \le \frac{\hat \s_S}{\s} \vee 1 }
					\\
					\lesssim~& \s_i^2\tau^2 \pth{\sqrt{(\bs{\S}^{-1})_{ii}}\frac{s+\|\bs{\S}\|_F}{d}e^{-\frac{\tau^2}{16}} + \pth{\frac{s+\|\bs{\S}\|_F}{d}}^2}. \label{eq:invertible-mse-2}
				\end{align}
				Next we bound from above the second term in \eqref{eq:invertible-mse-1}, which can be further decomposed into two terms using the fact that ${\rm E}^c= ({\rm E}_2\cap {\rm E}_1^c)\cup {\rm E}_2^c$.
				For the term corresponding to the event ${\rm E}_2\cap {\rm E}_1^c$ we have
				\begin{equation}
					\label{eq:invertible-mse-3}
					\esp  \left(\varepsilon_i^2  \one\pth{ \frac{\hat \s_S}{\s}\wedge 1 \le \frac{|\varepsilon_i|}{\s_i\tau} \le \frac{\hat \s_S}{\s} \vee 1} \one({\rm E}_1^c \cap {\rm E}_2)\right)
					\le 
					\frac{3}{2}\s_i^2\tau^2 \prob[{\rm E}_1^c]
					\lesssim  \s_i^2\tau^2 \frac{\|\bs{\S}\|_F^2}{d^2}.
				\end{equation}
				Next, for the term corresponding to the event ${\rm E}_2^c$ we have
				\begin{align}
					&\esp  \left(\varepsilon_i^2  \one\pth{ \frac{\hat \s_S}{\s}\wedge 1 \le \frac{|\varepsilon_i|}{\s_i\tau} \le \frac{\hat \s_S}{\s} \vee 1} \one({\rm E}_2^c)\right)
					\le \esp \left(\varepsilon_i^2  \one(|\varepsilon_i| \ge \s_i \tau )\one({\rm E}_2^c)\right) 
					\\
					&\qquad \le \sqrt{{\mathbf E}  \left(\varepsilon_i^4  \one(|\varepsilon_i| \ge \s_i \tau )\right)} \frac{\|\bs{\S}\|_F}{d}
					\lesssim \s_i^2 \sqrt{(\tau^3 \vee 1 ) e^{-\tau^2/2}} \frac{\|\bs{\S}\|_F}{d}, \label{eq:invertible-mse-4}
				\end{align}
				where we have used first the Cauchy-Schwarz inequality and then the fact that
				\[
				\int_{\tau}^{\infty } x^4 e^{-x^2/2}dx
				= e^{-\tau^2/2} (\tau^3+3\tau) + 3 \int_{\tau}^{\infty} e^{-x^2/2}dx.
				\]
				Plugging \eqref{eq:invertible-mse-2}, \eqref{eq:invertible-mse-3}, and \eqref{eq:invertible-mse-4} into \eqref{eq:invertible-mse-1} we complete the proof.
			\end{proof}
			
				\subsubsection{Proof of Theorem~\ref{thm:unknownvar2}}
				
				We will use the following lemma about the deviations of $\hat{\s}^2_\eta$, the proof of which will be given at the end of this subsection.
				
				\begin{lemma}\label{lem:etasigma}
					Let $\eta \in (0, 1)$. If $s \le d / 100$ then
					\[
					\prob(\hat{\s}^2_\eta / \s^2 \in (q_{\eta / 20} / q_{1 - \eta / 20}, 1)) \ge 1 - \eta / 4.
					\]
				\end{lemma}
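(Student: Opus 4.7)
The plan is to rewrite the target event in terms of the empirical cdf $\hat F_{Y^2}$ evaluated at the two deterministic thresholds $\s^2 q_{\eta/20}$ and $\s^2 q_{1-\eta/20}$, replace $\hat F_{Y^2}$ by $\hat F_{\varepsilon^2}(\cdot/\s^2)$ using \eqref{eq:FY-Feps}, and then bound each of the two resulting tail events by a direct Markov inequality.

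The first step is the equivalence: dividing through by $\s^2 q_{1-\eta/20}$ in the definition $\hat\s_\eta^2 = \hat M/q_{1-\eta/20}$, we have $\hat\s_\eta^2/\s^2 \in (q_{\eta/20}/q_{1-\eta/20}, 1)$ iff $\hat M \in (\s^2 q_{\eta/20}, \s^2 q_{1-\eta/20})$. Since $\hat M = \min\{t : \hat F_{Y^2}(t) \ge 0.5\}$ and (almost surely, because each $\tilde Y_i^2$ has a continuous marginal distribution) no $\tilde Y_i^2$ equals either of the two fixed thresholds, this in turn is a.s.\ equivalent to the joint event
\[
\hat F_{Y^2}(\s^2 q_{\eta/20}) < 0.5 \quad \text{and} \quad \hat F_{Y^2}(\s^2 q_{1-\eta/20}) \ge 0.5.
\]

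I would then apply \eqref{eq:FY-Feps} in both directions together with $s/d \le 1/100$ to deduce that the failure of the first event forces $\hat F_{\varepsilon^2}(q_{\eta/20}) \ge 0.49$, while the failure of the second forces $1 - \hat F_{\varepsilon^2}(q_{1-\eta/20}) > 0.49$. Both of these random variables are nonnegative with expectation exactly $\eta/20$, because $F(q_\alpha) = \alpha$ by definition of the chi-square quantiles. Markov's inequality therefore bounds each failure probability by $(\eta/20)/0.49 < \eta/8$, and a union bound yields the stated $\eta/4$.

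The crucial methodological point — which I expect to be the only conceptual obstacle — is that a Chebyshev-type argument would be too weak here: under correlated noise, Proposition~\ref{prop:correlated-gaussian}\ref{prop:cdf} only bounds $\var[\hat F_{\varepsilon^2}(u)]$ by $C \|\tilde{\bs{\S}}\|_F^2/d^2$, which can be of constant order when $\|\tilde{\bs{\S}}\|_F \asymp d$, so no useful bound uniform in $\bs{\S}$ would follow. Markov's inequality is the right instrument because the two deterministic thresholds ($0.49$) are much larger than the tiny means ($\eta/20$), so one-sided tails can be controlled by expectations alone, with no dependence on the covariance structure.
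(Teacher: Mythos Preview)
Your proposal is correct and follows essentially the same argument as the paper's proof: both translate the event on $\hat M$ into conditions on $\hat F_{Y^2}$ at the two fixed thresholds $\s^2 q_{\eta/20}$ and $\s^2 q_{1-\eta/20}$, pass to $\hat F_{\varepsilon^2}$ via \eqref{eq:FY-Feps} and the bound $s/d\le 0.01$, and then apply Markov's inequality to the nonnegative variables $\hat F_{\varepsilon^2}(q_{\eta/20})$ and $1-\hat F_{\varepsilon^2}(q_{1-\eta/20})$ (each with mean $\eta/20$) against the threshold $0.49$, finishing with a union bound. Your additional remark that Chebyshev would not suffice here is correct and worth keeping.
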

				On the event defined in Lemma~\ref{lem:etasigma}, we have that, almost surely, $|\max\{\hat{\s}_S, \hat{\s}_\eta\} - \s| \le |\hat{\s}_S - \s|$. Using this and following the same argument as in the proof of Theorem~\ref{thm:unknownvar1}, we only need to show that there exists some constant $c_\eta>0$ depending on $\eta \in (0, 1)$ such that
				\begin{equation}\label{eq:esteta1}
					\prob\left(
					\left| \sum_{i \not\in \cS} \varepsilon_i^2 \one\left(|\varepsilon_i| > \frac{\max \{\hat \s_S, \hat{\s}_\eta\}}{\s}\s_{i}\tau_\eta \right) - \sum_{i \not\in \cS} \varepsilon_i^2 \one\left(|\varepsilon_i| > \s_{i}\tau_\eta \right) \right| \ge c_\eta \psi^*(s, \bs{\S}) \right) \ge 1 - \eta / 2.
				\end{equation}
				To achieve this goal, we consider the event 
				\[
				\mathcal{A}(\eta) := \left\{\frac{|\hat{\s}^2_S - \s^2|}{\s^2} \le \delta := c\frac{\|\bs{\S}\|_{F}}{\eta d}, \hat{\s}^2_\eta / \s^2 \in (q_{\eta / 20} / q_{1 - \eta / 20}, 1)\right\}.
				\]
				Using Lemma~\ref{lem:etasigma} and Theorem~\ref{thm:hat-sigma-sparse} we obtain that, for a constant $c > 0$ large enough, $\prob(\mathcal{A}(\eta)) \ge 1 - \frac{3 \eta}{8}$. Arguing as in the proof of Lemma~\ref{lem:unsconf} we get  
				\begin{align}
					& \esp \left(  \varepsilon_i^2 \left| 
					\one\left(|\varepsilon_i| > \frac{\max\{\hat{\s}_S, \hat{\s}_\eta\}}{\s} \s_{i} \tau_\eta \right) - 
					\one\left(|\varepsilon_i| > \s_{i} \tau_\eta\right)  \right|  \one(\mathcal{A}(\eta)) \right) \\
					\le~& \esp \left(  \varepsilon_i^2  \one\left( \max\left\{\tau_\eta\sqrt{1-\min(1,\delta)}, \tau_\eta \sqrt{\frac{q_{\eta / 20}}{q_{1 - \eta / 20}}}\right\}   \le \frac{|\varepsilon_i|}{\s_{i}} \le \tau_\eta \sqrt{1+\delta}  \right)  \one(\mathcal{A}(\eta)) \right) \\
					\le~& \esp \left(  \varepsilon_i^2  \one\left( \max\left\{\tau_\eta\sqrt{1-\min(1,\delta)}, \tau \right\}   \le \frac{|\varepsilon_i|}{\s_{i}} \le \tau_\eta \sqrt{1+\delta}  \right) \one(\mathcal{A}(\eta)) \right) \\
					\lesssim & \s_i^2 \delta (1 + \delta)\tau_\eta^3 e^{-\tau^2/2}.
				\end{align}
				Recalling that $\tau = 3 \sqrt{\log (1 + \|\bs{\S}\|_F^2 / s^2)}$ 
				and acting in the same way as in Lemma~\ref{lem:unsconf} we obtain~\eqref{eq:esteta1} for any $\eta \in (0, 1)$.
				
				\begin{proof}[Proof of Lemma~\ref{lem:etasigma}]
					Without loss of generality, we assume that all $\s_i$'s are equal to $1$. Then we have 
					\begin{align}
						\prob(\hat{M} \ge \s^2 q_{1 - \eta / 20}) & \le \prob\left(\hat{F}_{Y^2}\left(\s^2 q_{1 - \eta / 20}\right) \le \frac{1}{2}\right) \le \prob\left(\hat{F}_{\varepsilon^2}\left(q_{1 - \eta / 20}\right) \le 0.51\right) \\
						& = \prob\left(1 - \hat{F}_{\varepsilon^2}\left(q_{1 - \eta / 20}\right) \ge 0.49\right) \le \eta / 8,
					\end{align}
					where the last bound uses Markov's inequality. Similarly, we have
					\begin{align}
						\prob(\hat{M} \le \s^2 q_{\eta / 20}) & \le \prob\left(\hat{F}_{Y^2}\left(\s^2 q_{\eta / 20}\right) \ge \frac{1}{2}\right) \le \prob\left(\hat{F}_{\varepsilon^2}\left(q_{\eta / 20}\right) \ge 0.49\right)\le \eta / 8.
					\end{align}
					The desired result follows from the above inequalities and the union bound.
				\end{proof}
			
			\subsubsection{Proof of Theorem~\ref{thm:unknownlb}}
			
			We only prove the lower bound of Theorem~\ref{thm:unknownlb}. The proof of the upper bounds (items (i) and (ii) of Theorem~\ref{thm:unknownlb}) follows from arguments quite analogous to the proofs of Theorems~\ref{thm:unknownvar1} and~\ref{thm:unknownvar2}. It is therefore omitted.
			
			Notice that for $s\le \rho$ the rates stated in the lower bounds of Theorems~\ref{thm:lower-known-sigma-rho} and~\ref{thm:unknownlb} are the same while the result of Theorem~\ref{thm:lower-known-sigma-rho} is stronger since it holds for a smaller minimax risk. This proves the lower bound of Theorem~\ref{thm:unknownlb} for $s\le \rho$.  
			
			It remains only to prove the lower bound for the case $s > \rho$. 
			Similarly to the proof of Theorem~\ref{thm:lower-known-sigma-rho}, we proceed by a reduction to the lower bound for the model with independent observations. Without causing confusion, we will use the same notation as in the proof of Theorem~\ref{thm:lower-known-sigma-rho} for somewhat different but analogous quantities. Given a fixed loss function $w(\cdot)$ satisfying the assumptions of Theorem~\ref{thm:unknownlb} and any $\phi>0$, we now define $R^*(s, d, \rho, \phi)$ as the  minimax risk 
			\[
			R^*(s, d, \rho, \phi) := \inf_{\hat{T}}\sup_{(\bs{\t}, \bs{\S}) \in \T(s, d, \rho)} \sup_{\s > 0} \esp\left[w\left(\s^{-1}\phi^{-1/2} |\hat{T} - \|\bs{\t}\|_2|\right)\right].
			\]
			With this notation, our aim is to prove that $R^*(s, d, \rho, \phi^*(s,\rho^2))\gtrsim 1$ for $s > \rho$. Moreover, arguing analogously to the proof of Theorem~\ref{thm:lower-known-sigma-rho} we see that it suffices to obtain this result for $\rho \ge  2 \sqrt{d}$. We also assume without loss of generality that $s \ge 2$.
			

		
		For a positive integer $r$, we define $p=\lfloor d/r \rfloor$. Let $\bs{\S}_0^*$ be the block diagonal covariance matrix defined as in \eqref{eq:sigma0} with the only difference that we replace $\bs{I}_{(d - rp) \times (d - rp)}$ by $\bs{1}_{(d - rp) \times (d - rp)}$, that is, by the $(d - rp) \times (d - rp)$ dimensional matrix with all entries equal to $1$.
		For a positive integer $k\le p$, we define a class of vectors $\bs{\t}\in \RR^d $ as follows:
		\[
		\bar{\T} := \left\{(\underset{r \;\mathrm{entries}}{\underbrace{\tilde{\t}_1, \cdots, \tilde{\t}_1}}, \cdots, \underset{r \;\mathrm{entries}}{\underbrace{\tilde{\t}_p, \cdots, \tilde{\t}_p}}, 0, \cdots, 0)^\top \in \RR^d:  \sum_{i=1}^p \one(\tilde{\t}_i \neq 0) \le k  \right\}.
		\]
		By the choice of $p$ we have $d-rp< r$. If the triplet $(r, p, k)$ is such that
		\begin{equation}\label{eq:restr3}
			k \le p,\quad\quad rk \le s, \quad \text{and} \quad r^2 (p + 1) \le \rho^2,
		\end{equation}
		then for any $\bs{\t} \in \bar{\T}$, we have $(\bs{\t}, \bs{\S}_0^*) \in \T(s, d, \rho)$. 
		Moreover, for $\bs{\t}\in\bar{\T}$, $\bs{\S}=\bs{\S}_0^*$, model \eqref{eq:model} is equivalent to the model with $p+1$ independent observations $(\tilde{Y}_1, \dots, \tilde{Y}_{p + 1})$ such that
		$\tilde{Y}_i \sim \cN(\tilde{\t}_i, \s^2)$, $i\in[p]$, where at most $k$ of $\tilde{\t}_i$'s are nonzero, and $\tilde{Y}_{p + 1} \sim \cN(0, \s^2)$. 
		We denote by ${\mathbf E}_{\tilde{\bs{\t}},\sigma}^*$, where 
		$\tilde{\bs{\t}}=(\tilde 
		\t_1,\dots,\tilde \t_p)$, the expectation with respect to the joint distribution of $(\tilde{Y}_1, \ldots, \tilde{Y}_p, \tilde{Y}_{p + 1})$ and we define, for any $\phi>0$,
		\begin{equation}\label{eq:raugmented}
			\tilde R^*(k, p, \phi) := \inf_{\hat{T}}\sup_{\tilde{\bs{\t}} \in B_0(k, p) 
			} \sup_{\s > 0} {\mathbf E}_{\tilde{\bs{\t}},\sigma}^*\left[w\left(\s^{-1} \phi^{-1/2}|\hat{T} - \|\tilde{\bs{\t}}\|_2|\right)\right].
		\end{equation}
		Since $\|\bs{\t}\|_2 = \sqrt{r} \sqrt{\sum_{i=1}^p \tilde{\t}_i^2}$ we obtain following the same lines as in \eqref{eq:r-to-r} that, for any $\phi>0$,
		\begin{equation}\label{eq:rtor}
			R^*(s, d, \rho, \phi) \ge \tilde R^*(k, p, r^{-1}\phi).
		\end{equation}
		In what follows, we set $r=\lfloor \frac{\rho^2}{2d} \rfloor$ and $k=\lfloor \frac{s}{r} \rfloor$. Then the triplet $(r,p=\lfloor d/r \rfloor,k)$ satisfies the inequalities in \eqref{eq:restr3}. Since $\rho\ge 2\sqrt{d}$ and $s>\rho>r$ we have that $r$ and $k$ are positive integers, so the above triplet is a feasible choice. Moreover, with such a choice, we have that $r\le \frac{\rho^2}{2d}<\frac{s}{2}\le \frac{d}{2}$ as $\rho<s\le d$. Therefore,
		\begin{align}
			& k\le \frac{s}{r}\le \frac{4sd}{\rho^2},\\
			& k\ge \frac{s}{r}-1 \implies r k \ge s - r \ge s / 2,\\
			& p\ge \frac{d}{r}-1 \implies rp\ge d-r\ge \frac{d}{2} \implies p\ge \frac{d}{2r}\ge \frac{d^2}{\rho^2}.
		\end{align}
		Next, recall that $s\le \alpha d$ for $\alpha>0$ small enough by the assumption of the theorem. Thus, in the following we assume without loss of generality that $\alpha\le 1/2$, so that $\rho\le d/2$. It follows that 
		\begin{align}
			\frac{k^2}{p}&\ge \frac{(s/r-1)^2}{(d/r)} = \frac{(s-r)^2}{dr}
			\\
			&\ge \frac{(2s\rho -\rho^2)^2}{2d^2\rho^2} \ge \frac{(2d -\rho)^2}{2d^2} \ge 1.
		\end{align}
		Consequently,
		\[
		r \phi^*(k, p) = \frac{r k}{1 \vee \log(k / \sqrt{p})} \gtrsim \frac{s}{1 \vee \log(s / \rho)} = \phi^*(s, \rho^2).
		\]
		Note also that, by choosing $\alpha$ small enough the condition $p\ge \bar C$ of Lemma~\ref{lem:unknown-indep} below is met. Indeed, $p\ge d/(2r)\ge d/s\ge 1/\alpha$.
		Using these remarks, \eqref{eq:rtor} and Lemma~\ref{lem:unknown-indep} we obtain that, for some absolute constant $c > 0$, 
		\[
		R^*(s, d, \rho, \phi^*(s, \rho^2)) \ge R^*(s, d, \rho, cr \phi^*(k, p)) \ge \tilde R^*(k, p, c\phi^*(k, p)) \gtrsim 1,
		\]
		This concludes the proof.

		\begin{lemma}\label{lem:unknown-indep}
			Let $\tilde{R}^*(k, p, \phi)$ be defined in \eqref{eq:raugmented}, where $w$ is a loss function satisfying the assumptions of Theorem~\ref{thm:unknownlb}. Then there exists an absolute constant $\bar C>0$ such that for any $p\ge \bar C$, any $k \ge \sqrt{p}$, and any absolute constant $c>0$ we have $\tilde{R}^*(k, p, c\phi^*(k, p)) \gtrsim 1$.
		\end{lemma}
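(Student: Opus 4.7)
The plan is to adapt the two-point Le Cam argument underlying the proof of \cite[Proposition 3(ii)]{comminges2021adaptive} to our augmented observation model. For the standard i.i.d.\ model with $p$ observations and unknown $\s$, the lower bound $\phi^*(k,p)$ is obtained by confusing a sparse mean with a small upward shift of the noise level. Concretely, one considers two priors on $(\tilde{\bs{\t}},\s)$: under $\pi_0$, $\tilde{\bs{\t}}=0$ and $\s=\s_0$; under $\pi_1$, $\tilde{\bs{\t}}$ is drawn with $k$ random $\pm b$-valued coordinates and $\s=\s_1<\s_0$, with $b^2\asymp \s_1^2/(1\vee\log(k/\sqrt{p}))$ and $\s_0^2-\s_1^2\asymp \s_1^2\,\phi^*(k,p)/p$. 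These choices force $\|\tilde{\bs{\t}}\|_2\asymp \s_1\sqrt{\phi^*(k,p)}$ under $\pi_1$, match the per-coordinate marginal variances of the $\tilde Y_i$ to second order, and produce a $\chi^2$-divergence between the mixtures of $(\tilde Y_1,\dots,\tilde Y_p)$ under $\pi_0$ and $\pi_1$ that is bounded by an absolute constant.

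I would feed exactly the same pair of priors into the augmented model. Since the extra coordinate $\tilde Y_{p+1}\sim \cN(0,\s^2)$ is conditionally independent of the rest given $\s$, the joint $\chi^2$-divergence factorises,
\[
1+\chi^2(\mu_0\otimes\nu_0\,\|\,\mu_1\otimes\nu_1)=(1+\chi^2(\mu_0\|\mu_1))\,(1+\chi^2(\nu_0\|\nu_1)),
\]
and a direct Gaussian calculation, valid whenever $\s_0^2<2\s_1^2$, gives
\[
\chi^2\bigl(\cN(0,\s_0^2)\,\|\,\cN(0,\s_1^2)\bigr)=\frac{\s_1^2}{\s_0\sqrt{2\s_1^2-\s_0^2}}-1\lesssim\Bigl(\frac{\s_0^2-\s_1^2}{\s_1^2}\Bigr)^{2}\lesssim\Bigl(\frac{\phi^*(k,p)}{p}\Bigr)^{2}.
\]
Since $\sqrt p\le k\le p$ implies $\phi^*(k,p)/p\le 1/(1\vee\log(\sqrt{p}))$, for $p\ge\bar C$ large enough the single-coordinate factor $1+\chi^2(\nu_0\|\nu_1)$ is at most $3/2$, so the overall $\chi^2$-divergence over $(\tilde Y_1,\dots,\tilde Y_{p+1})$ stays bounded by an absolute constant. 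Le Cam's two-point inequality then yields $\tilde R^*(k,p,c\phi^*(k,p))\gtrsim 1$ for every fixed $c>0$ and for any non-trivial monotone loss $w$.

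The main obstacle I anticipate is aligning the argument with the exact construction of \cite[Proposition 3(ii)]{comminges2021adaptive}: if their proof uses a Fano-style mixture over sparsity patterns rather than a pure two-point reduction, the same noise-level-shift idea has to be carried through the mixture computation. The key quantitative input however, namely that a single $\cN(0,\s^2)$-distributed observation perturbs the $\chi^2$-divergence only by an $O((\phi^*(k,p)/p)^2)=o(1)$ factor in the regime $\sqrt p\le k\le p$, is robust to either form of the construction, and the condition $p\ge\bar C$ is exactly what is needed to absorb this extra factor.
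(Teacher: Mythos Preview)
Your plan is correct and follows the same high-level idea as the paper: import the mixture construction from \cite[Proposition~3(ii)]{comminges2021adaptive} and show that the single extra Gaussian observation $\tilde Y_{p+1}$ perturbs the relevant distance between mixtures only negligibly. The execution differs in the choice of divergence. You propose to factor the $\chi^2$-divergence over the product structure and bound the extra factor by $1+O((\phi^*(k,p)/p)^2)$; the paper instead works with total variation and uses the additive inequality
\[
V(\tilde{\mathbf P}_1\otimes \cN(0,1),\,\tilde{\mathbf P}_2\otimes \cN(0,1+\tilde\varphi))
\le V(\tilde{\mathbf P}_1,\tilde{\mathbf P}_2)+V(\cN(0,1),\cN(0,1+\tilde\varphi)),
\]
together with $V(\tilde{\mathbf P}_1,\tilde{\mathbf P}_2)\le 1/2$ (taken directly from the cited proof) and $V(\cN(0,1),\cN(0,1+\tilde\varphi))\lesssim \tilde\varphi$, where $\tilde\varphi\le c_0/2$ with $c_0$ a free small constant in the construction. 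The TV route is slightly more robust: it needs only the TV bound on the first $p$ coordinates that \cite{comminges2021adaptive} states, whereas your multiplicative $\chi^2$ argument requires the stronger input that $\chi^2(\tilde{\mathbf P}_1\|\tilde{\mathbf P}_2)$ itself is $O(1)$---true in the usual Ingster-type computation, but something you would have to check rather than quote. Either way the conclusion and the quantitative control of the extra coordinate coincide.
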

		
		\begin{proof}
			It suffices to prove the lemma for $c=1$ since the function $w(\cdot/c^{1/2})$ satisfies the assumptions of Theorem~\ref{thm:unknownlb}. The proof follows the argument in \cite[Proposition~3(ii)]{comminges2021adaptive}, with the only difference is that, in our notation, \cite[Proposition~3(ii)]{comminges2021adaptive} proves the result when the sample is $(\tilde{Y}_1, \dots, \tilde{Y}_{p})$, while in our case the sample contains the additional observation $\tilde{Y}_{p + 1}$. Therefore, we only focus on the way to handle this difference and do not reproduce the other details of the proof. 
			
			Given a $p$-dimensional probability distribution $\bs{P}$ and $v>0$, we denote by $\bs{P} \otimes \cN(0, v)$ the distribution of a $(p+1)$-dimensional random vector $(X_1, \ldots, X_{p + 1})$ such that the first $p$ coordinates $(X_1, \ldots, X_p)$ are distributed as $\bs{P}$ and independent from the last coordinate $X_{p+1} \sim \cN(0, v)$.
			
			Let the mixture probability measures $\tilde{\mathbf{P}}_1, \tilde{\mathbf{P}}_2$ and the constant %
			$\tilde\varphi>0$ be defined in the same way as $\mathbf{P}_1$, $\mathbf{P}_2$ and $\varphi$ in the proof of \cite[Proposition~3(ii)]{comminges2021adaptive} with the only difference that $s$ and $d$
			are replaced by $k$ and $p$, respectively. 
			We now follow the same argument as in the proof of \cite[Proposition~3(ii)]{comminges2021adaptive} but considering the distributions $\tilde{\mathbf{P}}_1 \otimes \cN(0, 1)$ and $\tilde{\mathbf{P}}_2 \otimes \cN(0, 1 + \tilde\varphi)$ instead of $\mathbf{P}_1$ and $\mathbf{P}_2$, respectively. It follows that, in order to prove the lemma, it suffices to show that  
			\begin{equation}\label{eq:TVv}
				V(\tilde{\mathbf{P}}_1 \otimes \cN(0, 1), \tilde{\mathbf{P}}_2 \otimes \cN(0, 1 + \tilde\varphi)) \le \frac{5}{8},
			\end{equation}
			where $V(\cdot, \cdot)$ denotes the total variation distance. All other elements of the proof remain the same as in~\cite[Proposition~3(ii)]{comminges2021adaptive}.  Standard properties of the total variation distance yield that
			\begin{align}
				& V(\tilde{\mathbf{P}}_1 \otimes \cN(0, 1), \tilde{\mathbf{P}}_2 \otimes \cN(0, 1 + \tilde\varphi)) \\
				& \qquad \le V(\tilde{\mathbf{P}}_1 \otimes \cN(0, 1), \tilde{\mathbf{P}}_2 \otimes \cN(0, 1)) + V(\tilde{\mathbf{P}}_2 \otimes \cN(0, 1), \tilde{\mathbf{P}}_2 \otimes \cN(0, 1 + \tilde\varphi)) \\
				& \qquad = V(\tilde{\mathbf{P}}_1, \tilde{\mathbf{P}}_2) + V(\cN(0, 1), \cN(0, 1 + \tilde\varphi)).
			\end{align}
			From the proof of \cite[Proposition~3(ii)]{comminges2021adaptive} we have $V(\mathbf{P}_1, \mathbf{P}_2) \le {1}/{2}$, so that $V(\tilde{\mathbf{P}}_1, \tilde{\mathbf{P}}_2) \le {1}/{2}$. Moreover, $V(\cN(0, 1), \cN(0, 1 + \tilde\varphi)) \le c' \tilde\varphi$ for some absolute constant $c' > 0$. Recalling the definition of $\varphi$ in the proof of \cite[Proposition~3(ii)]{comminges2021adaptive} we have that $\tilde\varphi \le \frac{c_0k}{2 p \log(e k^2 / p)}\le c_0/2$, where the constant $c_0>0$ can be chosen as small as needed. By choosing $c_0 \le \frac{1}{4c'}$ we obtain \eqref{eq:TVv}.
		\end{proof}
		
		\subsection{Proof of~\Cref{prop: lower-testing}}
		
			As in the proof of \Cref{thm:lower-known-sigma-rho} we consider two cases. In the first case corresponding to $\sqrt{d} \le \rho \le 2 \sqrt{d}$ we have $\phi(s, \rho^2) \le 2 \phi(s, d)$. Then the result follows by reduction to the model with $\bs{\Sigma} = \bs{I}$ {\rev by noticing that}
			\begin{align}\label{eq:LB-test}
				{\mathcal R}^*(s,d,\rho,\bar r_{\gamma}) \ge   \inf_{\Psi}\Big[\mathbf{P}_{\mathbf{0},\bs{I}} (\Psi=1)  + \sup_{\bs{\t} \in B_0(s, d): \|\bs{\t}\|_2\ge 4 \gamma\s\sqrt{\phi(s, d)}} \mathbf{P}_{\bs{\t},\bs{I}} (\Psi=0)   \Big]\ge {\rev c(\gamma)}
			\end{align}
			{\rev for all $\gamma>0$, where $c(\gamma)>0$ depends only on $\gamma$. This is done following the same lines as in the proof of \cite[Theorem 12]{CCT2017}.
				Note that \cite[Theorem 12]{CCT2017} obtains \eqref{eq:LB-test} for sufficiently small $\gamma$, while we need it here for all $\gamma>0$. Extending the proof in \cite{CCT2017} to all $\gamma>0$ is straightforward by using the inequality  $V(P,Q)\le 1- \exp(-\chi^2(P,Q))/2$ instead of $V(P,Q)\le \sqrt{\chi^2(P,Q)}$, where $V(P,Q)$ and $\chi^2(P,Q)$ are the total variation distance and the chi-square divergence  between two probability measures $P$ and $Q$ (cf. \cite[Theorem 2.2(iii)]{tsybakov2009introduction}). Following the lines of \cite[Theorem 12]{CCT2017} with this modification we obtain that the bound \eqref{eq:LB-test} holds with
				\begin{align}\label{eq:LB-test1}
					c(\gamma)=\exp(1-\exp(16\gamma^2))/2.
				\end{align}
			}

			We now focus on the second case $\rho > 2\sqrt{d}$. We consider only the sparse regime $s \le \rho$ since the result in the regime $s> \rho$ follows from an analogous argument. Repeating the same steps as 
			in the proof of \Cref{thm:lower-known-sigma-rho} we find:
			\begin{align}
				{\mathcal R}^*(s,d,\rho,\bar r_{\gamma}) & \ge   \inf_{\Psi}\Big[\mathbf{P}_{\mathbf{0},\bs{\S}} (\Psi=1)  + \sup_{\bs{\t} \in \tilde{\T}: \|\bs{\t}\|_2\ge 2 \gamma\s\sqrt{\phi(s, \rho^2)}} \mathbf{P}_{\bs{\t},\bs{\S}} (\Psi=0)   \Big] \\
				& \ge \inf_{\Psi}\Big[\mathbf{P}_{\mathbf{0},\bs{I}_{p \times p}} (\Psi=1)  + \sup_{\tilde{\bs{\t}} \in B_0(1, p): \|\tilde{\bs{\t}}\|_2\ge 2 \gamma\s\sqrt{r^{-1} \phi(s, \rho^2)}} \mathbf{P}_{\tilde{\bs{\t}},\bs{I}_{p \times p}} (\Psi=0)   \Big],
			\end{align}
			where in the last line the distributions are considered over a $p$-dimensional Euclidean space. 
			Since according to \eqref{eq:proof:th3} there exists an absolute constant $c' > 0$ such that $\phi(s, \rho^2) \le c' r \phi(1, p)$ we further have
			\begin{align}
				{\mathcal R}^*(s,d,\rho,\bar r_{\gamma}) \ge & \inf_{\Psi}\Big[\mathbf{P}_{\mathbf{0},\bs{I}_{p \times p}} (\Psi=1)  + \sup_{\tilde{\bs{\t}} \in B_0(1, p): \|\tilde{\bs{\t}}\|_2\ge 2 \gamma\s\sqrt{c' \phi(1, p)}} \mathbf{P}_{\tilde{\bs{\t}},\bs{I}_{p \times p}} (\Psi=0)   \Big] \\
                & \ge {\rev c(\sqrt{c'} \gamma/2)},
			\end{align}
			{\rev where the last inequality follows from the second inequality in \eqref{eq:LB-test} that we use with $s$ and $d$ taking values $1$ and $p$, respectively.}
			Combining the above lower bounds for both considered cases and using the expression for $c(\gamma)$ in \eqref{eq:LB-test1} we obtain the desired result.

\begin{acks}[Acknowledgments]
The authors would like to thank the anonymous referees, an Associate
Editor and the Editor for their constructive comments that improved the
quality of this paper.
\end{acks}

\begin{funding}
The work of Y. Wang was supported by National Key R\&D Program (No. 2022YFA1008100). The work of P. Yang was supported by Tsinghua University Dushi Program and the grant of National Natural
Science Foundation of China (NSFC) 12101353. The work of A.B. Tsybakov was supported
by the grants of French National Research Agency (ANR) ``Investissements d'Avenir'' LabEx
Ecodec/ANR-11-LABX-0047 and ANR MaLip.
\end{funding}

\bibliographystyle{imsart-nameyear} 
\bibliography{reference}       

\end{document}